\documentclass[12pt,a4]{amsart}
 \usepackage[a4paper, left=28mm, right=28mm, top=26mm, bottom=31mm]{geometry}
 \usepackage{amsthm,amssymb}
 \usepackage[all]{xy}
 \usepackage{amsmath}
 \usepackage{textcomp}
 \usepackage{cases}
 \usepackage{here}
 \usepackage[dvipdfmx]{graphicx}
 \usepackage{amscd}
 \usepackage{mathrsfs}
 \usepackage{float}
 \usepackage{wrapfig}
 \usepackage{color}
 \usepackage{cleveref} 
 \usepackage{appendix}

\newtheorem{thm}{Theorem}[section]
 \newtheorem{dfn}[thm]{Definition}
 \newtheorem{lem}[thm]{Lemma}
 \newtheorem{prp}[thm]{Proposition}
 \newtheorem{rem}[thm]{Remark}
 
 \newtheorem{cor}[thm]{Corollary}

 \newtheorem{exam}[thm]{Example}
 \def\P{\mathbb{P}}
 \def\A{\mathbb{A}}
 \def\G{\mathbb{G}}
 \def\R{\mathbb{R}}
 \def\Z{\mathbb{Z}}
 \def\Q{\mathbb{Q}}

 \def\F{\mathscr{F}}
 \def\C{\mathscr{C}}

 \def\X{\mathscr{X}}
 
\def\rec{\mathop{\mathrm{rec}}\nolimits}
 \def\Shv{\mathop{\mathrm{Shv}}\nolimits}
 \def\AP{\mathop{\mathrm{AP}}\nolimits}

 \def\sgn{\mathop{\mathrm{sgn}}\nolimits}

 \def\Conv{\mathop{\mathrm{Conv}}\nolimits}
 \def\cone{\mathop{\mathrm{cone}}\nolimits}
 \def\cyl{\mathop{\mathrm{cyl}}\nolimits}
 
 \def\Res{\mathop{\mathrm{Res}}\nolimits}

 \def\Tan{\mathop{\mathrm{Tan}}\nolimits}
 
 \def\Zar{\mathop{\mathrm{Zar}}\nolimits}

 \def\Id{\mathop{\mathrm{Id}}\nolimits}
 \def\CH{\mathop{\mathrm{CH}}\nolimits}
 
 \def\Gr{\mathop{\mathrm{Gr}}\nolimits}
 
 \def\res{\mathop{\mathrm{res}}\nolimits}
 \def\Im{\mathop{\mathrm{Im}}\nolimits}
 \def\Ker{\mathop{\mathrm{Ker}}\nolimits}
 
 \def\sing{\mathop{\mathrm{sing}}\nolimits}

 \def\Trop{\mathop{\mathrm{trop}}\nolimits}
 
 \def\relint{\mathop{\mathrm{rel.int}}\nolimits}
 
 \def\Star{\mathop{\mathrm{Star}}\nolimits}

 \def\Spec{\mathop{\mathrm{Spec}}\nolimits}

 \def\Hom{\mathop{\mathrm{Hom}}\nolimits}

 \def\id{\mathop{\mathrm{id}}\nolimits}
 \def\Trop{\mathop{\mathrm{Trop}}\nolimits}

 \def\Gr{\mathop{\mathrm{Gr}}\nolimits}
 
\numberwithin{equation}{section}
 \numberwithin{figure}{section}

 \makeatletter
 \DeclareRobustCommand{\genericinterval}[2]{%
  \@ifstar{\genericinterval@star{#1}{#2}}{\genericinterval@nostar{#1}{#2}}}
   \newcommand{\genericinterval@star}[4]{\mathopen{}\mathclose{\left#1#3,#4\right#2}}
    \newcommand{\genericinterval@nostar}[4]{\mathopen{#1}#3,#4\mathclose{#2}}

         \makeatother
 
\begin{document}
 \title[Tropical cohomology via reductions]{Tropical cohomology via reductions of tropical varieties}
 \author{Ryota Mikami}
 \address{Institute of Mathematics, Academia Sinica, Astronomy-Mathematics Building, No.\ 1, Sec.\ 4, Roosevelt Road, Taipei 10617, Taiwan.}
 \email{ryotamikamimath467jhoiv9dhk3@gmail.com}
 \subjclass[2020]{Primary 14T20; Secondary 14F43; Tertiary 14G22}
 \keywords{tropical geometry, monodromy, tropical cohomology, Gauss-Manin connections}
 \date{\today}
 
\begin{abstract}
  Itenberg-Katzarkov-Mikhalkin-Zharkov 
  gave an isomorphism of tropical cohomology and cohomology of some maximally degenerate algebraic varieties.
  Their proof was based on tropical analogs of Steenbrink's geometric monodromy-weight spectral sequences.
  These were generalized to the non-realizable case
  by  
  Amini-Piquerez.  
    In this paper, we give a new construction of these tropical spectral sequences in the same way as Steenbrink's ones. 
  For this purpose, we introduce 
    reductions of tropical varieties. 
  We also show that eigenwave actions are given by 
   tropical Gauss-Manin connections.
 \end{abstract}
 
 \maketitle
 \setcounter{tocdepth}{1}
 \tableofcontents

\section{Introduction}\label{sec:introduction}
  \emph{Tropical cohomology} $H^{p,q}_{\Trop} (W)$ for a tropical variety $W$ 
   was introduced by 
   Itenberg-Katzarkov-Mikhalkin-Zharkov \cite{ItenbergKatzarkovMikhalkinZharkovTropicalhomology2019}. 
   They proved that for a one parameter 
   family 
   $\mathcal{Z} = (Z_t)_{t \in \mathcal{D}^*} \subset \P^n \times \mathcal{D}^{*}$ 
   of smooth complex projective varieties 
   over a punctured disk $\mathcal{D}^*$
   with a smooth (i.e., locally matroidal) tropicalization $\Trop (\mathcal{Z}_{\mathbb{C} ((t))})$, 
   we have 
   \begin{align}\label{eq tro coh = LMHS}
   H^{p,q}_{\Trop}(\Trop (\mathcal{Z}_{\mathbb{C} ((t))}))
   \cong 
   \Gr_{2p}^W H^{p+q} (Z_{\infty}),
   \end{align}
   where $\Gr_{2p}^W H^{p+q} (Z_{\infty})$ is
   the weight graded quotient
   of the limit mixed Hodge structure.
   Since, in this case,   
   the odd weight graded quotients vanish 
   and $\Gr_{2p}^W H^{p+q} (Z_{\infty})$ is of pure $(p,p)$-type, 
   hodge numbers of general members $Z_t$
   equal tropical hodge numbers of $
    \Trop(\mathcal{Z}_{\mathbb{C}((t))})$.
   (Gross-Siebert \cite{GrossSiebertMirrorsymmetryvialogarithmicdegenerationdataII2010} also gave a similar cohomology and isomorphism for some integral affine manifolds with singularities.)

  The proof in \cite{ItenbergKatzarkovMikhalkinZharkovTropicalhomology2019} 
  was based on two spectral sequences.
   One is Steenbrink's geometric monodromy-weight spectral sequence (\cite[Corollary 4.20]{SteenbrinkLimitsofHodgestructures1976})
   $$ E_1^{p,q} 
      = \bigoplus_{ \max \{ 0, p \} \leq k }
        \bigoplus_{\substack{J \subset I \\ \# J = -p +2k +1}}
      H^{p+2q-2k}_{\sing} \big( \bigcap_{i \in J}  Z_i  \big) (p-k) 
      \Rightarrow 
      H^{p+q} (Z_{\infty}), 
   $$ 
    where  $Z_0 = \bigcup_{i \in I} Z_i $ is 
    a strictly semi-stable reduction 
    of $\mathcal{Z}$.
   The other one, given in \cite{ItenbergKatzarkovMikhalkinZharkovTropicalhomology2019},
   is its tropical analog.
   This tropical analog  conveges to tropical cohomology, and has $E_1$-terms isomorphic to Steenbrink's one.
   Hence we get isomorphism (\ref{eq tro coh = LMHS}).

     Amini-Piquerez \cite{AminiPiquerezHodgetheoryfortropicalvarieties2020} 
    generalized 
   this tropical spectral sequence to 
   abstract compact tropical homology manifolds $X $
   (i.e., tropical varieties regular at infinity 
    satisfying Poincar\'{e}-Verdier duality) 
    under certain assumption (see \Cref{introduction spectral sequence for ss reduction} and \Cref{intro the existence of unimodular triangulation}).
     As an corollary, 
  isomorphism (\ref{eq tro coh = LMHS}) was generalized to  
   more general maximally degenerate families in $\P^n$
   by Aksnes-Amini-Piquerez-Shaw  
   \cite{AksnesAminiPiquerezShawCOHOMOLOGICALLYTROPICALVARIETIES2025}.

  The construction of tropical spectral sequences
   in \cite{ItenbergKatzarkovMikhalkinZharkovTropicalhomology2019} and 
   \cite{AminiPiquerezHodgetheoryfortropicalvarieties2020}
   is conceptually similar to, but technically different from that of Steenbrink's one (\cite[Corollary 4.20]{SteenbrinkLimitsofHodgestructures1976}).
  While Steenbrink's one 
    is based on  
    \emph{relative log holomorphic differential forms} 
    (cf. Deligne's work on mixed Hodge theory \cite{DeligneThoriedeHodgeII1972}), 
  the tropical one 
   is based on explicit double complexes using polyhedral complex structures of tropical varieties,
   and 
   uses 
     the exactness of certain complexes. 
     In particular, 
     this exactness 
     requires 
    tropical Poincar\'{e}-Verdier duality, 
    which is close to 
    ``maximally degenerate'' assumption in 
     complex algebraic geometry.
     Of course, 
    such an assumption is not required in Steenbrink's one.

  The main goal of this paper is 
  to give a simpler construction of 
   the tropical spectral sequences 
   in the same way as Steenbrink 
   (without tropical Poincar\'{e}-Verdier duality).

  \begin{thm}[\Cref{spectral sequence for ss reduction}, 
  \cite{ItenbergKatzarkovMikhalkinZharkovTropicalhomology2019}
   for smooth realizable $X$,
   \cite{AminiPiquerezHodgetheoryfortropicalvarieties2020} for a tropicl homology manifold $X$]
  \label{introduction spectral sequence for ss reduction}
   Let $X$ be a ($\Q$-rational) tropical variety regular at infinity (\Cref{def:regular at infinity}) in a smooth proper tropical toric variety $\Trop (T_{\Sigma})$ with cochacter lattice $N$.
   We assume that $X$ has a $N$-unimodular 
    $\Q$-rational polyhedral complex structure $\Lambda$ (\Cref{def unimodularity:label}).
    Let $X_{\infty}:= \bigcup_{i \in I} X_i$ 
    be 
    the strictly semi-stable reduction (\Cref{sub:semi_stable_reductions_of_tropical_varieties}) of $X$
    given by $\Lambda$.

    Then
    for $r \geq 0$, 
    there exists a spectral sequence 
    $$E_1^{p,q}= 
    \bigoplus_{ \max \{0,p\} \leq u -r }
    \bigoplus_{\substack{ J \subset I \\ \# J = -p + 2 (u-r) +1 } }
  H^{2r-u+p, p+q -u}_{\Trop} \big(\bigcap_{i \in J} X_i \big)
     \Rightarrow H^{r,p+q-r}_{\Trop } (X)  , $$
    whose differential $d_1 \colon E_1^{p,q} \to E_1^{p+1,q}$ is given by pullback maps and Gysin maps, up to signs (see \Cref{differential of monodromy-weight spectral sequence:label} for the signs).
  \end{thm}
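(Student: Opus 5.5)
The plan is to copy Steenbrink's construction directly: build a tropical analogue of the relative log de Rham complex on the reduction $X_\infty$, put the monodromy (or weight) filtration on it, and read off the spectral sequence of a filtered complex.

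\textbf{Step 1: the complexes.} Using $\Lambda$, form the strictly semi-stable reduction $X_\infty=\bigcup_{i\in I}X_i$ inside a tropical toric degeneration, in the sense of \Cref{sub:semi_stable_reductions_of_tropical_varieties}. On $X_\infty$ consider the sheaves of tropical log forms $\Omega^{\bullet}_{\log}$, meaning tropical $(p,\ast)$-superforms with logarithmic poles along the strata $X_J:=\bigcap_{i\in J}X_i$. Let $\Omega^{\bullet}_{X_\infty/\log}$ be their relative version, obtained by dividing by the wedge with $d\log t$, where $t$ is the parameter direction. The first claim to establish is the tropical analogue of Steenbrink's comparison: the cohomology of the relative log complex in bidegree $(r,\cdot)$ computes $H^{r,\cdot}_{\Trop}(X)$. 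I would prove this locally. Near a point of $X_\infty$ the reduction is, by unimodularity, a product of a standard simplex cone with the fan of a matroid. The comparison then reduces to a Künneth-type computation of tropical cohomology of the local model, compared against the general fiber (the recession part $X$).

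\textbf{Step 2: the double complex.} Following \cite{SteenbrinkLimitsofHodgestructures1976}, define
\[
A^{p,q}:=\Omega^{r'}_{\log}/W_{q}\Omega^{r'}_{\log},
\qquad r'=p+q+1,
\]
with two differentials: $d'$ induced by $d$, and $d''=\wedge\, d\log t$. Filter by the monodromy weight filtration $W_k A^{p,q}:=\mathrm{image}\bigl(W_{2q+k+1}\bigr)$. The key local statement is that $d\log t\wedge$ makes
\[
\Omega_{X_\infty/\log}\to A^{\bullet,0}\to A^{\bullet,1}\to\cdots
\]
a resolution. This is purely combinatorial on the local model, a Koszul complex in the variables $d\log x_i$ subject to $\sum_i d\log x_i = d\log t$. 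Because it is local, it needs no Poincar\'e–Verdier duality; unimodularity is exactly what lets the local model be a product.

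\textbf{Step 3: graded pieces.} Use the tropical Poincar\'e residue isomorphism
\[
\Gr^W_m\Omega^{\bullet}_{\log}\cong\bigoplus_{\#J=m}\Omega^{\bullet-m}_{X_J},
\]
which I expect to be stated earlier for regular-at-infinity varieties. It gives
\[
\Gr^W_k A^{p,q}\cong\bigoplus_{\#J=k+2q+1}\Omega_{X_J}
\]
with the appropriate shift of bidegree. Reindexing, with $u-r$ playing the role of $k$ in Steenbrink's formula and the $(p,q)$-type shifts by $u-r$ replacing the Tate twists, yields the stated $E_1$-term
\[
E_1^{p,q}=\bigoplus_{\max\{0,p\}\le u-r}\;\bigoplus_{\#J=-p+2(u-r)+1} H^{2r-u+p,\,p+q-u}_{\Trop}(X_J).
\]
For the differential, identify $d_1$ as the connecting map of the $W$-filtration. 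Its two components come from $\wedge\,d\log t$, which gives restriction (pullback) maps, and from $d$, which gives residue maps, and these residue maps are the Gysin maps. The signs are the ones recorded in \Cref{differential of monodromy-weight spectral sequence:label}.

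\textbf{Main obstacle.} The hardest part is Step 1 together with the exactness in Step 2. Both require an honest local computation of the tropical log complex on the model $\mathrm{cone}(\text{simplex})\times(\text{matroidal fan})$ near boundary strata at infinity, and this is where regularity at infinity enters. I would first check exactness stalkwise using the product decomposition and Koszul acyclicity. The comparison with $H_{\Trop}(X)$ would then follow from a homotopy retraction of a neighborhood of $X_\infty$ onto the recession fiber, compatible with superforms.
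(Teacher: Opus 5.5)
Your architecture matches the paper's: the reduction $X_\infty$, a relative log sheaf computing $H^{r,\cdot}_{\Trop}(X)$, the resolution by $\wedge t$, the filtration $\mathrm{Im}(W_{2q+k+1})$, and residues on graded pieces. But you transplant Steenbrink's double complex too literally. The sheaves $\F^p$ carry no de Rham-type differential $\F^p\to\F^{p+1}$, because $H^{r,q}_{\Trop}(X)=H^q(X,\F^r_X)$ is already a Hodge-graded object. The correct analogue is $\Gr_F^r$ of Steenbrink's double complex. For each fixed $r$ this is the single complex $A^{r,*-r}=\F^{*+1}_{\X}(\log X_\infty)/W_{*-r}\F^{*+1}_{\X}(\log X_\infty)|_{X_\infty}$, whose only differential is $\wedge t$; the other direction is sheaf cohomology. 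Your $d'\colon A^{p,q}\to A^{p+1,q}$ raises the form degree $p+q+1$, so it has no counterpart on $\F^\bullet$. Realizing it by $d'$ on log superforms would at best compute a sum over all Hodge degrees, and isolating one $r$ means passing to $\Gr_F^r$, where that differential vanishes anyway. Log superforms at infinity would also first have to be constructed. With $\F^p_{\X}(\log X_\infty):=i_*\F^p_{\X\setminus X_\infty}$ instead, your Step 1 and the exactness in Step 2 follow directly from regularity at infinity and (\ref{log Fp local description}).

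The substantive gap is the Gysin component of $d_1$, which you attribute to ``residues of $d$''. In the tropical setting it is a connecting homomorphism in the sheaf-cohomology direction. The relevant sequences are $0\to\Gr^W_{m-1}\to W_m/W_{m-2}\to\Gr^W_m\to 0$ of $\F^{u+1}_{\X}(\log X_\infty)|_{X_\infty}$; these split on stalks but in general not as sheaves. Identifying this connecting map with $\pm$ Gysin maps is a genuine computation that you do not address. The paper does it as follows:
\begin{itemize}
\item It passes to the coarse topology $X_{\Zar}$, where $R\pi_*\F^r=\pi_*\F^r$, and resolves each graded piece by Amini--Piquerez's flabby complexes.
\item It builds $\C^{r,*-r}_{\AP,W_{-p}/W_{-p-2}}$, whose off-diagonal part is $(-1)^p\sgn(o(J'),o(J))$ times the identity on summands $F^{2r+p-q}_{X\cap N_{\sigma_K,\R}}(\{0_{\sigma_K}\})$. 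Since Gysin maps are by definition such identity maps, this gives Corollary~\ref{sign of log spectral sequence:label}.
\item That corollary is then applied on $X_\infty$ to the $(-1)^u d''$ part of a double complex of injective resolutions, while the $\wedge t$ part yields the pullbacks.
\end{itemize}
You locate the difficulty in the local exactness, which is the easy step.

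Two smaller corrections. First, $X$ is not assumed smooth, so the local model is $(\R\cup\{\infty\})^k$ times the star fan of $X$ at a face of $\Lambda$, not a matroid fan. That star fan is an arbitrary tropical fan, which is all the Koszul argument needs. Second, the comparison uses $\X^\circ\setminus X_\infty\cong X\times(0,\infty)$ and a retraction of $\X^\circ=\X\setminus\Psi^{-1}(0)$ onto $X_\infty$. The recession fibre $\Psi^{-1}(0)$ is exactly what is discarded, and the general fibre is $X\times\{1\}$.
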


  \begin{rem}
   In the case of \cite{ItenbergKatzarkovMikhalkinZharkovTropicalhomology2019} and 
   \cite{AksnesAminiPiquerezShawCOHOMOLOGICALLYTROPICALVARIETIES2025} (which uses 
   \cite{AminiPiquerezHodgetheoryfortropicalvarieties2020}),
    tropical cohomology of $\bigcap_{i \in J} X_i $
  is isomorphic to singular cohomology of 
  the corresponding stratum closure $\bigcap_{i \in J} Z_i $ of the semi-stable reduction $Z_0$.
  Hence we get isomorphism (\ref{eq tro coh = LMHS}).
  See [loc.cit.] for details (and also above \Cref{cohomology comparison in sm projective toric varieties:label}).
  \end{rem}

  In \cite{ItenbergKatzarkovMikhalkinZharkovTropicalhomology2019} 
  and \cite{AminiPiquerezHodgetheoryfortropicalvarieties2020}, 
    tropical cohomology $
  H^{2r-u+p, p+q -u}_{\Trop} (\bigcap_{i \in J} X_i )$
  of closure strata $\bigcap_{i \in J} X_i $ 
  were considered 
  without defining 
   the \emph{strictly semi-stable reduction} $X_{\infty}$.
  This reduction $X_{\infty}$
  is the point of our proof. 
  It 
  enable us to give 
  an analog of Steenbrink's construction.
   This reduction of a tropical variety is an abstraction of tropicalizations (with respect to trivial valuations) of 
   strictly semi-stable reductions of algebraic varieties,
   and is
   given by taking a special fiber of a compactification of 
   a cone $\overline{ \R_{>0 } \cdot ( X\times \{ 1  \} )} $ 
   (see \Cref{sub:semi_stable_reductions_of_tropical_varieties} for details).
  This cone itself 
   is used in \cite{ItenbergKatzarkovMikhalkinZharkovTropicalhomology2019}, 
   and is also studied systematically 
   by Gubler \cite{GublerAguidetotropicalizations2013}
   to introduce tropical compactifications 
   in the non-trivially valued case.
   Strictly semistable reductions of tropical varieties 
    might also be known to some specialists. 
    However, to the author's knowledge, there seems to be no explicit reference in the literature.  

  The $N$-unimodular  
    $\Q$-rational polyhedral complex structure $\Lambda$
    of the tropical variety $X$ 
    is needed to construct the reduction $X_{\infty}$.
    Such a structure is also needed in 
  \cite{ItenbergKatzarkovMikhalkinZharkovTropicalhomology2019}
  and 
   \cite{AminiPiquerezHodgetheoryfortropicalvarieties2020}. 
  In \cite{ItenbergKatzarkovMikhalkinZharkovTropicalhomology2019},
   the existence of such a structure is proved only 
  for finite unions of  $\Q$-rational 
    polyhedra of regular at infinity 
  in $\Trop (\P^n)$ (after changing the lattice from $N $ to $\frac{1}{k}N$ ($k \in \Z_{\geq 1}$), which corresponds to a base change of $\mathcal{D}^{*}$). 
   This is why 
   isomorphism (\ref{eq tro coh = LMHS}) in 
  \cite{ItenbergKatzarkovMikhalkinZharkovTropicalhomology2019}
  and its generalization 
  in \cite{AksnesAminiPiquerezShawCOHOMOLOGICALLYTROPICALVARIETIES2025} 
  are considered only for families of algebraic varieties 
  in $\P^n$
   while tropicalizations of algebraic varieties are 
   defined for embeddings to any toric varieties.

  We shall show the existence of such structures after change of the lattice in smooth proper tropical toric varieties, which was conjectured in  \cite{AminiPiquerezHodgetheoryfortropicalvarieties2020} 
  (in a slightly different form). 
  (A weak form (i.e., after a blow-up of the ambient toric variety) was also proved in \cite{AminiPiquerezHodgetheoryfortropicalvarieties2020}.)
  Thus we 
   justify the assumption in \Cref{introduction spectral sequence for ss reduction}, 
   and give a generalization of isomorphism (\ref{eq tro coh = LMHS})  \cite[Theorem 1]{ItenbergKatzarkovMikhalkinZharkovTropicalhomology2019}, 
   and its generalization 
   \cite[Theorem 7.1]{AksnesAminiPiquerezShawCOHOMOLOGICALLYTROPICALVARIETIES2025}.
  (See \Cref{cohomology comparison in sm projective toric varieties:label}.)

  \begin{prp}[\Cref{the existence of unimodular triangulation}, \cite{ItenbergKatzarkovMikhalkinZharkovTropicalhomology2019}
  when $T_{\Sigma} = \P^n$, 
   a weaker form \cite{AminiPiquerezHodgetheoryfortropicalvarieties2020}]
    \label{intro the existence of unimodular triangulation}
  Let $X$ be a finite union of $\Q$-rational polyhedra  regular at infinity 
  in a smooth proper tropical toric variety $\Trop (T_{\Sigma})$
   with the cocharacter lattice $N $.
   Then there is a
   $\frac{1}{k}N$-unimodular
    $\Q$-rational polyhedral complex strucuture 
    of $X$ 
   for some $k \in \Z_{\geq 1}$. 
  \end{prp}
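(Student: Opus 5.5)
The plan is to reduce the statement to the classical existence of unimodular triangulations of dilated polytopal complexes: Knudsen--Mumford--Waterman, in the form that for any finite $\Q$-rational polyhedral complex there is $k$ and a $\frac{1}{k}N$-unimodular triangulation, relative to a fixed subdivision. I would carry this out stratum by stratum in $\Trop(T_{\Sigma})$, from the deepest (most degenerate) strata up to the open torus orbit.

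The key point is that the triangulations on different strata must be compatible with the closure relations. Regularity at infinity means that near each boundary stratum $\Trop(O(\sigma)) \cong N(\sigma)_\R$, the variety $X$ is locally a product of $X \cap \Trop(O(\sigma))$ with the cone $\sigma$. Here $N(\sigma)=N/\langle\sigma\rangle$.

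The construction goes as follows.

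\begin{enumerate}
\item \textbf{First structure.} Choose any $\Q$-rational polyhedral complex structure $\Lambda_0$ of $X$ compatible with the torus orbits. By refining, arrange that near infinity each cell is of the form $\overline{P+\sigma}$, with $P$ a cell in the boundary stratum $N(\sigma)_\R$. This uses regularity at infinity together with the smoothness and properness of $\Sigma$: every point of $N_\R$ far out lies in some $\overline{P}+\sigma$ chart, and each $\sigma$ is unimodular.

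\item \textbf{Deepest strata.} The deepest strata meeting $X$ are compact or finite pieces. On them, apply the KMW theorem to obtain a $\frac1k N(\sigma)$-unimodular triangulation of a refinement.

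\item \textbf{Inductive lifting.} For a stratum $\tau\prec\sigma$, the cells near the boundary are products $\Delta\times\sigma'$, where $\Delta$ is a unimodular simplex in $N(\sigma)_\R$ and $\sigma'$ is a face of the unimodular cone $\sigma$. The lattice $N$ splits as $N(\sigma)\oplus\langle\sigma\rangle$ (non-canonically, but $\sigma$ is smooth). With such a splitting, $\Delta\times\sigma'$ is a unimodular "simplex-cone" with respect to $\frac1k N$. Truncate these product cells at a large level, where the truncation hyperplanes are chosen rational with denominators dividing $k$.

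\item \textbf{Compact remainder.} The remaining part in $N(\tau)_\R$ is a compact $\Q$-polyhedral complex whose boundary is already unimodularly triangulated. Apply the relative version of KMW, which extends a given unimodular triangulation of a subcomplex after further dilation. Replace $k$ by a multiple $kk'$; unimodularity of the already constructed cells is preserved under this replacement, because a $\frac1k$-unimodular simplex can be subdivided $\frac{1}{kk'}$-unimodularly, e.g. by the standard dilation-and-triangulation argument.
\end{enumerate}

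Since there are finitely many strata, the process terminates with a single $k$.

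I expect the main obstacle to be the compatibility in step (iii)/(iv). Three things must hold: the product cells $\Delta\times\sigma'$ genuinely give unimodular cells in the sense of \Cref{def unimodularity:label}; the truncation at infinity creates no non-unimodular faces; and the relative KMW statement (fixing the boundary triangulation) is available. Rescaling the boundary triangulations by $k'$ must also be done compatibly on all strata simultaneously. To achieve this, I would re-triangulate each $\frac1k$-unimodular simplex by a canonical $\frac1{kk'}$-unimodular subdivision, such as the one induced by the hyperplane arrangement $x_i-x_j\in\frac{1}{kk'}\Z$ in simplex coordinates. Because this subdivision is canonical, it commutes with taking faces and with products by $\sigma'$.
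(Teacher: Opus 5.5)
There is a genuine gap. It sits where you expect it, but it is not bookkeeping: steps (i) and (iii) assume, rather than construct, a system of ``product neighbourhoods at infinity'' that is coherent across all strata at once, and that system is the real content of the proposition.

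A cell $\overline{P+\sigma}$ with $P$ a cell of $N_{\sigma,\R}$ only makes sense after $P$ has been lifted to $N_\R$. The lifts for a ray $l$ and for the deeper cones $\sigma\supset l$ must agree where their product neighbourhoods overlap. A linear splitting $N\cong N(l)\oplus\Z v_l$ does not achieve this in general. Take the Hirzebruch surface $F_1$, with rays $(1,0),(0,1),(-1,1),(0,-1)$, and $l=\R_{\ge0}(0,1)$. A section $t\mapsto(t,\lambda t)$ sends both cones of $\Star_\Sigma l$ into cones of $\Sigma$ only if $\lambda\ge0$ and $\lambda\le-1$, because $(1,0)+(-1,1)=(0,1)$. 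So the lifts must bend near the deeper strata.

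Hyperplane truncation of product cells also fails on its own. It produces cells such as $\R_{\ge0}^2\cap\{x+y\ge c\}=\Conv((c,0),(0,c))+\R_{\ge0}^2$, which is not of the form required in \Cref{def unimodularity:label}. Such cells must be re-subdivided compatibly with all neighbouring strata. You list these compatibilities as obstacles but give no mechanism that achieves them. In addition, the ``relative KMW'' you invoke in (iv) is not the statement of \Cref{KKMSD unimodular triangulation:label} and would need its own proof.

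The paper closes exactly this gap with one piecewise-linear, globally compatible family of lifts.
\begin{itemize}
\item Take the star-shaped polytope $rQ_{\{0\}}$ with vertices $rv_l$ and its barycentric subdivision $r\hat Q$. This yields pieces $rP_\tau$: for $\tau\neq\{0\}$ these are half-open dual blocks of the faces $rQ_\tau$, of dimension $n-\dim\tau$.
\item These pieces satisfy $N_\R=\bigsqcup_\tau rP_\tau+\tau$, with each $\Psi_\tau|_{rP_\tau}$ injective and lattice-preserving. The thickening is what makes every $rP_\tau+\tau$ full-dimensional; the naive $rQ_\tau+\tau$ is not.
\item \Cref{existence of nice dual polytope} shows $X\cap N_\R=\bigsqcup_\tau (X\cap rP_\tau)+\tau$ for $r\gg0$. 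Its proof bounds coefficients using strict convexity of the cones of $\Star_\Sigma\tau$. This is precisely what your step (i) silently assumes.
\item With this in hand, one applies KKMS once, to the compact complex $X\cap rQ_{\{0\}}$ refining $r\hat Q$. Each cell $U\subset\overline{rP_\tau}$ is then swept along $\tau$, and one takes closures and their intersections with boundary strata.
\end{itemize}
Unimodularity of the swept cells follows directly from $\Psi_\tau$ preserving the lattice. There is therefore no induction over strata, no relative triangulation theorem, and no compatibility issue from rescaling.
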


  As a byproduct of our construction of the spectral sequence 
  (\Cref{introduction spectral sequence for ss reduction}), 
  we shall give 
   a new description of ``monodromy'' of tropical varieties, 
   the \emph{eigenwave action}.
  The eigenwave action was introduced by 
  Mikhalkin-Zharkov \cite[Subsection 5.2]{MikhalkinZharkovTropicaleignewaveandintermediatejacobians2014}.
     Its dual 
   $H^{p,q}_{\Trop} (X ) \to H^{p-1,q+1}_{\Trop}(X)$
    coincides (\cite[Appendix]{MikhalkinZharkovTropicaleignewaveandintermediatejacobians2014})
     with the monodromy action $N$ on
     $ \Gr_{2p}^W H^{p+q} (Z_{\infty})$
     in the smooth realizable case.
   The eigenwave action is a cap product, 
    and the idea that the monodromy can be represented by some cap product  
    was appeared in the Calabi-Yau case 
    in works including Gross \cite{GrossSpecialLagrangianfibrations.I.Topology1998}, 
    Zharkov \cite{ZharkovTorusfibrationsofCalabi-Yauhypersurfacesintoricvarieties2000}, and Gross-Siebert \cite{GrossSiebertMirrorsymmetryvialogarithmicdegenerationdataII2010}.
    In particular, the cap product in \cite{GrossSiebertMirrorsymmetryvialogarithmicdegenerationdataII2010}, 
    called the \emph{radiance obstruction}, 
    coincides with the eigenwave action for Calabi-Yau complete intersection of Batyrev-Borisov 
    by a work of 
    Yamamoto \cite{YamamotoTROPICALCONTRACTIONSTOINTEGRALAFFINEMANIFOLDSWITHSINGULARITIES2021}. 
    There is also another description using superforms \cite{JellTROPICALCOHOMOLOGYWITHINTEGRALCOEFFICIENTSFORANALYTICSPACES2022}, \cite{LiuMonodromymapfortropicalDolbeaultcohomology2019}. 
 
  The eigenwave action is 
   not a direct analog of 
   the complex-geometric monodromy action.
  Recall that 
  the monodromy action $N$ 
  on weight graded quotients $\Gr_{2p}^W H^{p+q} (Z_{\infty})$
  can be written by using 
   the \emph{Gauss-Manin connection} 
   of the relative de Rham cohomology sheaves,  
   constructed as the  
   connecting homomorphism of a short exact sequence 
   (Katz-Oda \cite{KatzOdaOnthedifferentiationofDeRhamcohomologyclasseswithrespecttoparameters1968}, Deligne-Katz \cite{KatzTHEREGULARITYTHEOREMINALGEBRAICGEOMETRY1970})
   $$  0 \to \Omega_{\mathcal{D}}^1 (\log 0)  \otimes
    R^* f_* \Omega^{*}_{{\overline{\mathcal{Z}} } /\mathcal{D}} (\log Z_0 ) [-1 ]
        \to
         R^* f_*  \Omega^{*}_{\overline{\mathcal{Z}}} (\log Z_0 )  
        \to
         R^* f_*  \Omega^{*}_{{\overline{\mathcal{Z}}} /\mathcal{D}} (\log Z_0 )  \to 0 ,$$
   where $f \colon \overline{\mathcal{Z}} \to \mathcal{D}$ is a strictly semi-stable model over a disk $\mathcal{D}$ with special fiber $Z_0$.

   We shall introduce a tropical Gauss-Manin connection in the completely same way, and show the following.

  \begin{thm}[\Cref{eigenwave is residue of GM connections:label}]\label{intro eigenwave prp:label}
    Let $X$ be a smooth projective tropical variety regular at infinity. 
    Then the dual
   $$H^{p,q}_{\Trop} (X ) \to H^{p-1,q+1}_{\Trop}(X)$$
    of
   the eigenwave action 
   coincides with the residue of tropical Gauss-Manin connection.
   \end{thm}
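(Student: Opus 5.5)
The plan is to compute both maps on the level of sheaves on $X$ and compare them cochain by cochain.

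\textbf{Setting up the Gauss--Manin side.} Put $\widetilde{X} := \overline{\R_{>0}\cdot(X\times\{1\})}$, the compactified cone. It plays the role of the total space $\overline{\mathcal{Z}}$, and $X_\infty$ plays the role of the special fiber. By the construction in \Cref{sub:semi_stable_reductions_of_tropical_varieties}, the generic part of this cone is $X\times\R_{>0}$. On $X\times\R_{>0}$ the tropical sheaves of ``log forms'' $\F^p_{\widetilde{X}}$ split as
\[
0\to \F^{p-1}_{X}\otimes \Z\,dt \to \F^p_{\widetilde{X}} \to \F^p_{X} \to 0 ,
\]
where the first map is wedging with $dt$, the direction of the cone coordinate. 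This is the tropical Katz--Oda sequence. The Gauss--Manin connection is its connecting homomorphism
\[
\nabla\colon H^{p,q}_{\Trop}(X)\to H^{p-1,q+1}_{\Trop}(X),
\]
and the residue is obtained by restricting to a neighbourhood of $X_\infty$, i.e.\ the coefficient of $d\log t$.

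\textbf{Step 1: the sequence does not split globally.} The sheaves $\F^p$ are spanned by wedges of tangent vectors. Locally on $X\times\{1\}$ the cone splits, so the sequence is locally split. The splittings, however, depend on where the point sits. At $x\in X$ the tangent space of the cone is spanned by the tangent space of $X$ at $x$ and by the radial vector $(x,1)$. So the natural local lift of a multi-tangent vector differs between charts by terms involving contraction with the position vector $x$.

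\textbf{Step 2: compute the connecting map on cochains.} Work with the cellular (or singular) cochain model of tropical cohomology with respect to the unimodular structure $\Lambda$. Lift a cocycle $\alpha\in C^q(X,\F_p)$ chart by chart using the radial splitting, then apply the Čech/cellular coboundary. The failure of the lifts to agree on overlaps of cells $\sigma\subset\tau$ is given by the difference of the affine position vectors, i.e.\ by the integral edge vectors $x_\tau - x_\sigma$. This produces a $1$-cochain with values in the tangent lattice.

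\textbf{Step 3: compare with the eigenwave.} Mikhalkin--Zharkov define the eigenwave $\phi\in H^{1}(X,\F_1)$ (dually in $H_1(X,\F^1)$) as exactly this cocycle: to each edge it assigns its own integral direction vector. The eigenwave action is cap/cup product with $\phi$, contracting $\F_p\to\F_{p-1}$ and raising $q$ by one. So the connecting cochain of Step 2 is $\phi\frown\alpha$, up to sign. Dualizing, via Poincaré duality for smooth projective $X$ or directly on the cochain side, identifies the residue with the dual of the eigenwave action.

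\textbf{Main obstacle.} The hard part is Step 2 together with the sign and normalization conventions: checking that the connecting homomorphism in the chosen resolution really equals the cup product with $\phi$ on cochains, including at the boundary strata at infinity. This is where regularity at infinity enters, since it guarantees that $\phi$ and the splittings extend to $\Trop(T_\Sigma)$-boundary cells. I would first verify the computation in the compact interior with a fine subdivision, then check that boundary contributions vanish because $\phi$ is zero along sedentary directions.
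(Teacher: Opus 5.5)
Your route is sound, but it differs from the paper's.

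\textbf{The paper's route.} It never computes the connecting map by hand. Following Steenbrink, it compares the Katz--Oda sequence restricted to $X_\infty$ with
\[
0\to A^{r-1,*-r+1}[-1]\to\cone(\nu)[-1]\to A^{r,*-r}\to 0 ,
\]
which shows that $\res(\nabla)$ is the projection $\nu$ on the resolution $A^{r,*-r}$. Since $\nu$ shifts the monodromy-weight filtration by two, it induces $\nu_{E_2}$. For smooth $X$, the vanishing $H^{p,q}_{\Trop}(X_J)=0$ for $p\neq q$ concentrates $E_1$ in one row, so $\nu=\nu_{E_2}$ on $H^{*,*}_{\Trop}(X)$. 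The identification $\nu_{E_2}=$ eigenwave (up to sign) is then imported from Mikhalkin--Zharkov and Amini--Piquerez (Cor.~6.21).

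\textbf{Your route.} You compute on the slice $X\times\{1\}$. This is legitimate because restriction commutes with connecting maps and the paper's three cohomologies are identified with each other by restriction maps; say this explicitly in a write-up. You then read off the connecting map as cup product with the extension class of
\[
0\to\F^{p-1}_X\to\F^p_{\X}|_{X\times\{1\}}\to\F^p_X\to 0 .
\]
This class is represented by the differences of the radial splittings $b\mapsto b\wedge(v,1)$ on open stars of vertices, i.e.\ by contraction with $w-v$. That is the radiance obstruction of the cone.

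\textbf{What each buys.} Your argument is a direct, cochain-level explanation of why the eigenwave appears. It is independent of the monodromy-weight spectral sequence, its degeneration and the Amini--Piquerez theorem, so smoothness enters only through the definition of the eigenwave. The paper's argument instead yields the Steenbrink-type description $\res(\nabla)=\nu$, compatible with the spectral sequence, which is the analogy the paper is after. The price is inheriting the smooth projective hypotheses and the ``up to sign'' of the cited results.

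\textbf{Two points to fix in your sketch.}
First, $\phi$ must assign to each edge its displacement vector, with its lattice length, not a primitive direction; otherwise the cocycle condition fails.
Second, near infinity do not expect contributions to vanish. Lift each vertex $w$ of sedentarity $\sigma$ to some $x_w\in\Psi_\sigma^{-1}(w)$. This affine subspace lies in the affine span of every finite cell whose closure contains $w$, because $\sigma$ lies in that cell's recession cone. Hence the radial splitting at $x_w$ is compatible with the restriction maps to the boundary strata. The resulting cocycle need not vanish on overlaps of the star of $w$ with stars of finite vertices, so you still have to check it is cohomologous to Mikhalkin--Zharkov's $\phi$ with their convention at infinity.
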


  Proof is as follows. 
  Steenbrink \cite{SteenbrinkLimitsofHodgestructures1976} proved that 
  the monodromy action $N$, 
      given by the Gauss-Manin connection, 
     can be written as natural maps of 
    his spectral sequence. 
    Mikhalkin-Zharkov \cite[Proof of Theorem 3]{MikhalkinZharkovTropicaleignewaveandintermediatejacobians2014}
    (in the realizable case) 
    and 
  Amini-Piquerez \cite[Corollary 6.21]{AminiPiquerezHodgetheoryfortropicalvarieties2020} 
    (in general)
   proved that for a smooth projective tropical varieties, 
   the eigenwave action can be written as similar maps  of their spectral sequence.
  Therefore \Cref{intro eigenwave prp:label} follows from a tropical analog of Steenbrink's result,
   which can be proved in the same way.

  The contents of the paper is as follows. 
   In \Cref{sec tropical varieties}, we recall tropical varieties, 
   and prove the existence of $\frac{1}{k}N$-unimodular polyhedral complex structures of 
   tropical varieties regular at infinity. 
   In \Cref{sec:tropical_cohomology}, 
   we recall tropical cohomology, 
   Amini-Piquerez's complex computing tropical cohomology 
   for natural compactifications of tropical fans, 
   and Gysin maps. 
   \Cref{sec:log_tropical_cohomology_and_spectral_sequences} 
   is the main body of this paper. 
   In Subsection \ref{subsec log tropical cohomology}, 
   we introduce log tropical holomorphic forms 
    for tropical fans, 
    and give 
    a generalization of Amini-Piquerez's analog  
  (\cite{AminiPiquerezHomologyoftropicalfans2021},  \cite{AminiPiquerezHomologicalsmoothnessandDeligneresolutionfortropicalfans2024}) of 
    Deligne's weight spectral sequences for smooth open complex varieties (\cite{DeligneThoriedeHodgeII1972})
  from the case of smooth tropical fans
  to general tropical fans.
   In Subsection \ref{sub:semi_stable_reductions_of_tropical_varieties}, 
    we introduce strictly semi-stable reductions of tropical varieties regular at infinity. 
   In Subsection \ref{sub:monodromy_weight_spectral_sequences}, 
    we prove \Cref{introduction spectral sequence for ss reduction} in the completely same was as Steenbrink. 
   In Subsection \ref{sub:eigenwaves}, 
    we introduce tropical Gauss-Manin connections,
     and 
    prove \Cref{intro eigenwave prp:label}.     
   In Appendix, we fix our sign convention.

\subsection*{Acknowledgements}
The main idea of this paper first arose while I was preparing for an invited talk on \cite{AminiPiquerezHodgetheoryfortropicalvarieties2020} 
at the University of Osaka. 
I would like to express my sincere gratitude to 
Yusuke Nakamura and Masahiko Yoshinaga for the kind invitation.
This paper was written while I was a postdoctoral fellow under the supervision of Yuan-Pin Lee
and later under the supervision of Adeel Ahmad Khan.
I would like to express my sincere gratitude to them for their thoughtful support.
  I am currently supported by the National Science and Technology Council under grant number NSTC 112-2628-M-001-006.
I also gratefully acknowledge the support of Academia Sinica and the National Science and Technology Council.

\section{Tropical varieties}\label{sec tropical varieties}
  In this section,
    we fix notations on polyhedral complexes
     (\Cref{subsec:polyhedral complexes}), 
    and 
    recall tropical varieties (\Cref{sub:tropical_varieties}). 
  We also show 
   the existence of 
    quasi-projective $\frac{1}{k}N$-unimodular polyhedral complex structures of  
    a finite union of 
    $\Q$-rational polyhedra regular at infinity (\Cref{subsec:the_existence_of_unimodular_triangulations}). 
  These polyhedral complex structures are used to give 
  strictly semi-stable reductions of projective tropical varieties 
  regular at infinity (\Cref{sub:semi_stable_reductions_of_tropical_varieties}).

 \subsection{Polyhedral complexes} \label{subsec:polyhedral complexes}
   In this subsection,  we fix notations on polyhedral complexes.
  Let
  $M $ be a free $\Z$-module of finite rank $n$,
  and $N:=\Hom(M,\Z)$.
   For a $\Z$-module $G$ and  a commutative ring $R$, we put $G_R := G \otimes_\Z R$.

    \begin{dfn}
    A subset of $\R^n$ is called a \emph{$\Q$-rational polyhedron}
    if  it is the intersection of finitely many subsets of the form
    $$\{x \in \R^n \mid \langle x , a \rangle \leq b \} \ (a \in \Z^n, b \in  \Q),$$
    where $\langle x,a\rangle$ is the usual inner product of $\R^n$.
    A $\Q$-rational polyhedron is said to be 
    \emph{strongly convex} 
    if it does not contain any affine line.
    \end{dfn}

    In this paper, all $\Q$-rational polyhedra 
      are assumed to be strongly convex. 
     A $\Q$-rational polyhedron is called a \emph{cone} when we can take $b=0$ in the definition. 

   We shall use toric geometry, see 
   \cite{CoxaLittleSchenckToricvarieties2011} for details.
  In this paper, fans in $\R^s$ are finite collections of cones satisfying the usual properties.
  Let $\Sigma$ be a fan in $N_\R$, and $T_{\Sigma}$ the toric variety over a field $K$ corresponding to $\Sigma$. 
  For a cone $\sigma \in \Sigma$,
  we put 
  \begin{align*}
  \sigma^{\perp}:= & \{m\in M_\R \mid n(m)=0 \ (n\in \sigma) \}, \\
  \sigma^{\vee}:= &\{m\in M_\R \mid n(m ) \geq 0 \ (n\in \sigma) \}, 
  \end{align*}
  and 
  $N_{\sigma}:= \Hom (M  \cap \sigma^{\perp} , \Z)$.
  We put $T_{\sigma}:= \Spec K[M \cap \sigma^{\vee}]$ the affine toric variety corresponding to  
  $\sigma\in\Sigma$.
  
  We put $\Trop(T_{\Sigma}):=\bigsqcup_{\sigma\in\Sigma}N_{\sigma,\R}$,  endowed with a topology whose open subsets are 
  generated by 
  open subsets of 
  $$\Trop(T_{\Sigma}) \supset 
    \Trop(T_{\sigma}):=
   \bigsqcup_{\substack{\tau\in\Sigma\\
   \tau\subset\sigma } }N_{\tau,\R}
    \cong
   \Hom(M\cap\sigma^{\vee},\R\cup\{\infty\}),
  $$
  where 
  $ \Hom(M\cap\sigma^{\vee},\R\cup\{\infty\})$ is endowed with a natural topology.
  A tropical toric variety $\Trop(T_{\Sigma})$ is called a Kajiwara-Payne partial compactification of $N_{\R}\cong\R^{n}$. 
  See \cite{PayneAnalytificationisthelimitofalltropicalizations2009},
  \cite{RabinoffTropicalanalyticgeometryNewtonpolygonsandtropicalintersections2012} for details.

  In this paper, we will mainly consider a smooth toric variety $T_{\Sigma}$, 
  in this case,  we have $T_{\sigma}\cong\A^{\dim\sigma}\times\G_{m}^{n-\dim\sigma}$,
  and 
  hence 
  \[
  \Trop(T_{\sigma})\cong(\R\cup\{\infty\})^{\dim\sigma}\times\R^{n-\dim\sigma}.
  \]
  
    We shall recall fans and polyhedral complexes in $\Trop (T_{\Sigma})$.

  \begin{dfn}
  A  \emph{$\Q$-rational polyhedron} 
  (resp.\ a \emph{cone}) 
  in $\Trop(T_{\Sigma})$ is the closure $\overline{C}$
  in $\Trop(T_{\Sigma})$ of a $\Q$-rational polyhedron (resp.\ a cone) $C\subset N_{\sigma,\R}$
  for some $\sigma\in\Sigma$. We put $\dim(\overline{C}):=\dim(C)$ and 
  $\relint\overline{C}:=\relint C$. 
  \end{dfn}
  
  Let $P\subset \Trop(T_{\Sigma})$ be a $\Q$-rational polyhedron. 
  We put $\sigma_{P}\in\Sigma$
  the unique cone 
  such that $P\cap N_{\sigma_{P},\R}\subset P$ is dense.
  A subset $Q \subset P$ is called 
  a \emph{face} of $P$ 
  if  $ Q = \overline{R} \cap \overline{N_{\tau,\R}}$
  for a face $R$ of $P \cap N_{\sigma_P, \R}$ (in the usual sense) and a cone $\tau \in \Sigma$ containing $\sigma$
  (where the closures are taken in $\Trop (T_{\Sigma})$).
  A finite collection $\Lambda$ of  $\Q$-rational polyhedra (resp.\ cones) in $\Trop(T_{\Sigma})$
   is called  a   \emph{$\Q$-rational polyhedral complex} (resp.\ a \emph{fan}) 
   if it satisfies the usual properties, i.e., 
  \begin{itemize}
  \item for $P \in \Lambda$, each face of $P$ is also in $\Lambda$, and 
  \item for $P,Q \in \Lambda$, the intersection $P \cap Q$ is a face of $P$ and $Q$.
  \end {itemize}

  We put $\Lambda_i \subset \Lambda$ the subset of $i$-dimensional $\Q$-rational polyhedra.
  We call the union
  $$\lvert \Lambda \rvert := 
  \bigcup_{P\in\Lambda} P \subset \Trop(T_{\Sigma}) $$
  the \emph{support} of $\Lambda$.
  We say that $\Lambda$ is a \emph{$\Q$-rational polyhedral complex} (resp.\ a \emph{fan}) \emph{structure} of $\lvert \Lambda \rvert $.
  A $\Q$-rational polyhedral complex $\Lambda'$ is called a \emph{subdivision} of a $\Q$-rational polyhedral complex $\Lambda$ 
  if their supports are the same and 
  for any $P \in \Lambda'$,
  there exists  $Q \in \Lambda$
  such that $P  \subset  Q.$

 \subsection{Tropical varieties}\label{sub:tropical_varieties}
  In this subsection, we shall recall tropical varieties. 
   For details, see e.g., \cite{ItenbergKatzarkovMikhalkinZharkovTropicalhomology2019}, 
   \cite{MikhalkinZharkovTropicaleignewaveandintermediatejacobians2014}, 
   \cite{JellShawSmackaSuperformstropicalcohomologyandPoincarduality2019}, 
    \cite{MaclaganSturmfelsIntroductiontotropicalgeometry2015}.  

  For a $\Q$-rational polyhedron $P\subset \Trop (T_{\Sigma })$ 
    and
  $R=\Z,\Q$, or $\R$, 
  we put 
  $$\Tan_R P := \Tan_R (P \cap N_{\sigma_P,R})
     := \sum_{ \substack{ x,y\in P\cap  N_{\sigma_P, \R }
              \\
              x-y \in N_{\sigma_P, R} 
          } } R ( x-y ) \subset N_{\sigma_P,R}, $$
  an $R$-submodule of $N_{\sigma_P,R}$.
  (Recall that $\sigma_P \in \Sigma$ is the cone  such that 
  $P \cap N_{\sigma_P,\R} \subset P$ is dense.)

  \begin{dfn}
  \label{def:def of tropical variety}
  In this paper, 
  a ($\Q$-rational) \emph{tropcial variety} 
  $(X, w )$
   of dimension $d$
  in $\Trop(T_{\Sigma})$ 
   is a pair of 
   the support 
    $X \subset \Trop (T_{\Sigma})$
    of 
  a $\Q$-rational polyhedral complex $\Lambda$ of pure-dimension $d$ 
  and 
  a function $w \colon X \to \Z_{\geq 1}$
  such that 
  $w |_{\relint P}$ is constant $w_P$ for $P \in \Lambda_d$,
  and 
  for any $Q\in\Lambda_{d-1}$ of dimension $(d-1)$, we have 
  $$
  \sum_{\substack{P\in\Lambda_{d}\\
  Q\subset P,\sigma_{Q}=\sigma_{P}
  }
  }w_{P}v_{P,Q}=0 
  \qquad  \text{in} \ N_{\sigma_{Q}}/\Tan_{\Z}Q,
   $$ 
   where $v_{P,Q}\in N_{\sigma_{Q}}/\Tan_{\Z}Q$
  is the primitive vector such that 
  $$\R_{\geq0}\cdot v_{P,Q}
   =  \sum_{ \substack{x \in Q \cap N_{\sigma_Q, \R} \\ 
           y \in P \cap N_{\sigma_Q,\R} }} 
           \R_{\geq 0} \cdot \overline{y-x} 
    \qquad \text{in} \ N_{\sigma_{Q},\R}/\Tan_{\R}Q.
   $$ 

   Note that 
   the $\Q$-rational polyhedral structure $\Lambda$ 
   is \emph{not} fixed in the definition.
   In particular, any subdivision of $\Lambda$ can be used.
  \end{dfn}
   For simplicity, by abuse of notation, let $X$ also denote $(X,w)$.
   When we can take a fan $\Lambda$, we call $(X,w)$ a \emph{tropical fan}.

  \begin{dfn}
  \label{def:regular at infinity} 
  We assume that $T_{\Sigma}$ is smooth.
  We say that a subset $P\subset\Trop(T_{\Sigma})$
  is \emph{regular at infinity} (see e.g., \cite[Definition 1.2]{MikhalkinZharkovTropicaleignewaveandintermediatejacobians2014}) 
  if 
   for any $\sigma \in\Sigma$ and $x\in P\cap N_{\sigma ,\R}$, there exists
  an open neighborhood $U_{x}\subset\Trop(T_{\Sigma})$ of $x$ such
  that 
  $$P\cap U_{x}=
  ( (\R \cup \{ \infty \})^{\dim \sigma  } \times (P\cap N_{\sigma ,\R})   )
  \cap U_{x}
  $$ 
   under an identification 
   $$
  \Trop(T_{\sigma})\cong(\R\cup\{\infty\})^{\dim\sigma}
   \times N_{\sigma, \R} .
  $$
  \end{dfn}

  We say that a tropical variety $(X,w)$ is \emph{regular at infinity} if $X$
   is regular at infinity. 
  We say that a $\Q$-rational polyhedral complex is regular at infinity 
   if every maximal dimensional polyhedron is regular at infinity.

  \begin{exam}
  $\Trop (T_\Sigma)$ equipped with constant function $w \colon \Trop (T_\Sigma) \to \{ 1 \}$
  is a tropical variety of dimension $n$. 
  By abuse of notation, we also call this a \emph{tropical toric variety,}
  and let $\Trop(T_{\Sigma})$ denote it. 
  For example, we have $\Trop(\A^{s})=(\R\cup\left\{ \infty\right\} )^{s}$
  for a fixed toric strcture of $\A^{s}$. 
  \end{exam}

 \subsection{The existence of unimodular polyhedral complex structures}
  \label{subsec:the_existence_of_unimodular_triangulations}

  In this subsection, 
    we shall show the existence (\Cref{the existence of unimodular triangulation}) of 
    $\frac{1}{k}N$-unimodular polyhedral complex structures of  
    finite unions of 
    $\Q$-rational polyhedra regular at infinity,
    conjectured by  Amini-Piquerez
    (\cite[Subsection 1.12]{AminiPiquerezHodgetheoryfortropicalvarieties2020}) 
    (in a slightly different form). 
  These polyhedral complex structures are used to give 
  strictly semi-stable reductions of tropical varieties 
  in \Cref{sub:semi_stable_reductions_of_tropical_varieties}.
   The existence was previously proved by Itenberg-Katzarkov-Mikhalkin-Zharkov 
    (\cite[Proposition 48]{ItenbergKatzarkovMikhalkinZharkovTropicalhomology2019})
    when the ambient toric variety is a projective space.
   A weaker form 
    (i.e., up to blow-ups of the ambient toric variety) was proved by Amini-Piquerez
    (\cite[Theorem 4.1]{AminiPiquerezHodgetheoryfortropicalvarieties2020}).

   Let $\Sigma$ be a complete unimodular fan in $N_\R$.

  \begin{dfn}\label{def unimodularity:label}
  A $\Q$-rational polyhedron $P$ 
   in $\Trop (T_{\Sigma})$
   is said to be \emph{$\frac{1}{k}N$-unimodular} for  $k \in \Z_{\geq 1}$ 
    if 
    there exist 
     $v_i \in \frac{1}{k}N $ ($ 0 \leq i \leq r$)
     and $u_j \in N $ ($ 1 \leq j \leq \dim P -r$)
    such that 
    \begin{itemize}
      \item
     $
        \R_{\geq 0}  \langle  u_j \rangle_{ 1 \leq j \leq \dim P - r } $
        is contained in  some unimodular projective fan $\overline{\Sigma} \supset \Sigma$,
     \item
     $ v_i - v_0, \frac{1}{k}u_j $ ($1 \leq i \leq r$, $1 \leq j \leq \dim P -r $)
     extend to a basis of $\frac{1}{k}N$,
     and 
     \item 
    $P$ is the closure of the Minkowski sum
    $$  \Conv (v_0, \dots, v_r) 
    + \R  \langle  u_j \rangle_{ 1 \leq j \leq \dim P - r } $$
    where 
      $\Conv $ is the convex hull, 
    \end{itemize}
    where 
      $\frac{1}{k}N$ is the scalar multiplication of $N$.
  A $\Q$-rational polyhedral complex is 
  said to be $\frac{1}{k}N$-unimodular 
  if every maximal polyhedron is $\frac{1}{k}N$-unimodular.
  \end{dfn}

  Let $X\subset \Trop (T_\Sigma)$ be a finite union of $\Q$-rational polyhedra regular at infinity. 

  \begin{prp}\label{the existence of unimodular triangulation}
   There is a
   $\frac{1}{k}N$-unimodular
    $\Q$-rational polyhedral complex strucuture $\Lambda_X$ 
    of $X$ 
   for some $k \in \Z_{\geq 1}$
   such that 
   $\Lambda_X |_{ N_\R} := \{ P \cap N_\R \}_{ P \in \Lambda_X }$
     is finer than $\Sigma$, 
     i.e., 
     for any $ P \in \Lambda_X$, 
     there exists a cone $\sigma \in \Sigma$ such that 
     $P  \cap N_\R \subset \sigma$.
  \end{prp}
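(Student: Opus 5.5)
Our plan is to reduce to the classical theorem of Knudsen--Mumford--Waterman (KKMS, Theorem III.4.1): every rational polytope complex in $N_\R$ admits, after replacing $N$ by $\frac{1}{k}N$, a unimodular triangulation. The new point is infinity: we must control the unbounded directions and the behaviour near the boundary strata of $\Trop(T_\Sigma)$, which is where regularity at infinity enters. We proceed in three steps.

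\textbf{Step 1: a compatible structure.} Choose a $\Q$-rational polyhedral complex structure $\Lambda^0$ of $X\cap N_\R$ refining $\Sigma$, i.e.\ every polyhedron's recession cone lies in a cone of $\Sigma$ and every polyhedron lies in a cone of $\Sigma$. This is done by intersecting a structure of $X\cap N_\R$ with the cones of $\Sigma$ and then taking a common refinement. By regularity at infinity, near each stratum $N_{\sigma,\R}$ the set $X$ is locally a product $(\R\cup\{\infty\})^{\dim\sigma}\times(X\cap N_{\sigma,\R})$. Hence, after a further refinement, every polyhedron of $\Lambda^0$ meeting a neighbourhood of infinity in the direction $\sigma$ has the form $Q+\sigma'$ with $\sigma'$ a face of $\sigma$. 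The closures in $\Trop(T_\Sigma)$ then give a complex structure $\overline{\Lambda^0}$ of $X$.

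\textbf{Step 2: splitting off the unbounded directions.} Since $\Sigma$ is unimodular, each cone $\sigma\in\Sigma$ is spanned by part of a basis $u_1,\dots,u_{\dim\sigma}$ of $N$. This gives the $u_j$ of \Cref{def unimodularity:label}, with $\overline{\Sigma}$ any unimodular projective completion of $\Sigma$; such a completion exists by toric resolution of a projective fan refining $\Sigma$. We need this refinement to still contain $\Sigma$, and we handle that by taking $\overline{\Sigma}=\Sigma$ when $\Sigma$ is already projective, or else by a Chow-lemma style argument. The remaining task is to make the bounded parts $Q$ unimodular, compatibly across strata. Each bounded polyhedron $Q$ lives in $N_\R$, but its faces at infinity are the projections $\overline{Q}$ in $N_{\sigma,\R}=N_\R/\R\sigma$. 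To make the structure near infinity product-like, we therefore triangulate "from the deepest strata outward". For strata $N_{\sigma,\R}$ of decreasing $\dim\sigma$, we take a unimodular triangulation (after scaling) of the polyhedra of $X\cap N_{\sigma,\R}$ that extends the already chosen triangulations of the deeper strata. We then pull it back along the product charts of Step 1 to a collar neighbourhood $\{\text{coordinates in }\sigma\ \text{large}\}$. Since $\sigma$ is spanned by a partial basis, a unimodular simplex in $N_\R/\R\sigma$ times the cone $\sigma$ is still unimodular in the sense of the definition.

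\textbf{Step 3: filling in the compact middle part.} Finally, cut off the collars by the rational hyperplanes $\langle x,m_\rho\rangle = c$ and triangulate the remaining compact region. This is a finite rational polytope complex whose boundary triangulation is already fixed and unimodular for $\frac{1}{k}N$. To finish, we apply KKMS together with its relative version: a triangulation can be chosen extending a given unimodular triangulation on a subcomplex, after further scaling $k\mapsto kk'$. The scalings must be compatible, because a $\frac1k N$-unimodular simplex refines into $\frac{1}{kk'}N$-unimodular ones via the standard dilation subdivision.

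We expect the main obstacle to be Step 3. The KKMS construction is genuinely global, since it works through the convexity of a height function. So we must check that the already fixed collar triangulations survive the required rescalings and refinements. We would try to handle this by performing KKMS once on the whole truncated complex, including the collar slices, all in $N_\R$. Since the collar slices are products of a polytope with a simplex in the $\sigma$-directions, we would then choose the KKMS refinement to be compatible with these product structures. This should be possible by running the KKMS inductive edge-subdivision symmetrically in the $\sigma$-coordinates.
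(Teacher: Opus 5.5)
There is a genuine gap, and it lies in Steps 2--3, before the KKMS issue you flag: the collars of different strata do not fit together at the corners. ``Coordinates in $\sigma$'' are chart-dependent, and no single rational half-space $\{\langle x,m_\rho\rangle\ge c\}$ is a collar of $\overline{N_{\rho,\R}}$ compatible with all cones containing $\rho$.

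Take $\Sigma$ the fan of $\P^2$ with rays $v_1=e_1$, $v_2=e_2$, $v_3=-e_1-e_2$, put $l_i=\R_{\ge0}v_i$, and let $X=\Trop(\P^2)$. The $v_1$-coordinate is $x_1$ in the chart of $l_1+l_2$. In the chart of $l_1+l_3$ it is $a=x_1-x_2$, where $x=av_1+bv_3$ and $b=-x_2$. With $m_{v_1}=f_1$ and $m_{v_3}=-f_2$, the two collars meet in $\{a-b\ge c,\ b\ge c\}$. Its recession cone $\R_{\ge0}v_1+\R_{\ge0}(v_1+v_3)$ is not a cone of $\Sigma$. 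The boundary line $a-b=c$ cuts through the interior of the corner collar $c(v_1+v_3)+l_1+l_3=\{a\ge c,\ b\ge c\}$. Near the point $\overline{N_{l_1,\R}}\cap\overline{N_{l_3,\R}}$ the region is not a product, so it cannot be decomposed into pieces $\Conv(v_0,\dots,v_r)+\tau$ with $\tau\in\Sigma$. The product charts given by regularity at infinity are only local, so they do not supply global collars over a non-compact stratum either.

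There are two further problems. Step 2 asks you to triangulate the non-compact $X\cap N_{\sigma,\R}$ compatibly with the deeper strata. That is the proposition itself in lower dimension, and every new scaling forces a re-refinement of all deeper strata and their collars. Step 3 relies on a relative KKMS and on KKMS respecting the product structure of the collar slices. You give no argument for either, and KKMS has no mechanism that produces triangulations compatible with a product of a polytope and the $\sigma$-directions.

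The missing idea is a single global \emph{transversal} decomposition that handles all strata at once. The paper takes the piecewise-linear sphere $\partial(rQ_{\{0\}})$, cut out in each cone by $\sum_{l\subset\sigma}a_l=r$ (a piecewise-linear cut-off, not a hyperplane). It passes to the barycentric subdivision $r\hat Q$ and thickens the lower strata into the blocks $rP_\tau$. This gives the following:
\begin{itemize}
\item $N_\R=\bigsqcup_\tau rP_\tau+\tau$;
\item each $\Psi_\tau|_{rP_\tau}$ is injective and lattice-preserving, by unimodularity of $\Sigma$;
\item \Cref{existence of nice dual polytope}, which is where regularity at infinity enters, gives $X\cap N_\R=\bigsqcup_\tau(X\cap rP_\tau)+\tau$ for $r\gg0$.
\end{itemize}
A single application of KKMS to the compact complex $r\hat Q$, with $X\cap rQ_{\{0\}}$ as a subcomplex, then suffices. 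One sets $\Lambda_X=\{\overline{U+\tau}\cap\overline{N_{\nu,\R}}\}$ for $U\subset\overline{rP_\tau}$, and the structures on the boundary strata are induced through $\Psi_\tau$. Because the slices $rP_\tau$ are transverse to $\tau$ rather than products with the $\tau$-directions, no induction over strata, no relative KKMS, and no product-compatible triangulation is needed. Your closing idea of running KKMS once on a truncated region is the right instinct, but it only works once the truncation is organized in this way.
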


   When $\Sigma$ is projective, 
   $\Lambda_X$ is ``quasi-projective'' in a sense, see \Cref{quasi-projectivity of polyhedral complex structures} for details. 

  \begin{rem}
   We can also
   apply  
   \Cref{the existence of unimodular triangulation} 
   for quasi-projective $\Sigma$.
   In this case, $\Lambda_X |_{N_\R}$ is finer than 
   a unimodular projective fan $\overline{\Sigma}$ containing $\Sigma$
   such that the closure of $X$ in $\Trop (T_{\overline{\Sigma}})$ is again regular at infinity.
   The existence of such a $\overline{\Sigma}$
   follows from 
   toric resolution of singularities (\cite[Theorem 11.1.9]{CoxaLittleSchenckToricvarieties2011}). 
  \end{rem}

  The idea of proof of \Cref{the existence of unimodular triangulation} is 
  extending  
   a polyhedral complex structure 
   of a compact subset  of 
   $X \cap N_\R$,
   given by the following theorem, 
     by taking Minkowski sums with elements in $ \Sigma$.

  \begin{thm}[{Kempf-Knudsen-Mumford-Saint-Donat
  \cite[Chapter III, Theorem 4.1]{KempfKnudsenMumfordSaint-DonatToroidalEmbeddings11973}}]
    \label{KKMSD unimodular triangulation:label}
    For a compact $\Q$-rational polyhedral complex 
      in $N_\R$,
    there exists a $\frac{1}{k} N $-unimodular subdivision  for some $k \in \Z_{\geq 1}$.
  \end{thm}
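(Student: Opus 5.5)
The plan is to argue in three stages: reduce to simplices, put the vertices on a lattice, and then lower the lattice volume of the simplices by an induction that refines the lattice. Throughout, a compact $\frac{1}{k}N$-unimodular polyhedron is a simplex $\Conv(v_0,\dots,v_r)$ whose edge vectors $v_i-v_0$ extend to a basis of $\frac1kN$. Equivalently, its normalized $r$-volume with respect to the lattice $\frac1kN\cap\Tan_\R P$ equals $1$ and the affine lattice it spans is saturated. I will call this normalized $r$-volume the lattice volume.

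\textbf{Stage 1: rational triangulation.} Starting from the compact $\Q$-rational complex $\Lambda$, I will take its barycentric subdivision, using barycenters of the vertices of each face. All vertices of $\Lambda$ are rational, because every compact face of a $\Q$-rational polyhedron is cut out by rational equations. Hence the barycenters are rational and the result is a simplicial complex with vertices in $N_\Q$. Since there are finitely many vertices, there is a $k_0$ with all of them in $\frac1{k_0}N$. Replacing $N$ by $\frac1{k_0}N$, I may assume from now on that $\Lambda$ is a lattice simplicial complex, that is, all its vertices lie in $N$.

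\textbf{Stage 2: induction on the maximal lattice volume.} Let $V$ be the maximum of the lattice volumes of the simplices of $\Lambda$, each measured with respect to $N$ intersected with its own linear span. If $V=1$, every simplex is unimodular in the required sense: a simplex of lattice volume $1$ has edge vectors forming a basis of $N\cap\Tan_\R P$, and that saturated sublattice extends to a basis of $N$. So I will argue by induction on $V$, or more precisely on the multiset of volumes. The step to prove is the following: there exist an integer $m\ge1$ (for instance a multiple of $V!$) and a lattice subdivision $\Lambda'$ of $\Lambda$ with respect to $\frac1mN$ such that every simplex of $\Lambda'$ has $\frac1mN$-volume strictly smaller than $V$. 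In fact it suffices that each simplex of $\Lambda$ of volume $V$ is cut into pieces of strictly smaller volume, while no simplex of volume $<V$ produces pieces of volume $\ge V$. Passing from $N$ to $\frac1mN$ multiplies volumes by $m^{\dim}$, so this step cannot be a naive rescaling. It must use the KKMS construction: after dilation by $m$, subdivide $m\Delta$ into pieces that are translates of dilates of parallelepiped-type regions, and then use an interior lattice point of the half-open fundamental parallelepiped of a non-unimodular simplex. Such a point exists precisely because the volume exceeds $1$. Starring at these points gives simplices whose volumes are the barycentric coordinates times $V$, and these are all strictly smaller than $V$.

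\textbf{Stage 3: compatibility on common faces.} This is the main obstacle. The subdivisions chosen for different maximal simplices must agree on shared faces, so that $\Lambda'$ is a genuine polyhedral complex. My plan is to make every choice canonical and face-local. I will process simplices in increasing order of dimension, subdividing each face first, and then extend to a higher simplex by coning or starring from interior points of that simplex. The constructions will depend only on the face itself and its lattice $N\cap\Tan_\R$ of that face. I will use the same $m$ for all simplices, for example $m=V!$ or a suitable power of it, so that the dilation subdivisions restrict correctly to faces. To check that the induction terminates, I must verify in the starring step that the volume of a cone over a face-piece from a chosen interior point is strictly less than $V$. I expect this to be the delicate point: one has to choose the starring point, for example a lattice point in the box of the simplex with minimal height, so that the heights drop. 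This requires the real combinatorial input of KKMS and is where I would spend most of the effort. Once Stage 3 works, finitely many iterations produce a $\frac1kN$-unimodular subdivision, with $k$ the product of $k_0$ and the integers $m$ used along the way.
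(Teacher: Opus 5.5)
The paper gives no proof of this statement; it quotes it as a black box from KKMS (Chapter III, Theorem 4.1, due to Knudsen, Mumford and Waterman). Your proposal therefore has to stand on its own, and it does not. Stage 1 and the bookkeeping of the induction on the maximal volume are fine. But the volume-reduction step, which is the entire content of the theorem, is never supplied, and the mechanism you name for it is wrong.

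Starring at a nonzero point $w=\sum_i\alpha_i(v_i,1)$ of the half-open fundamental parallelepiped lowers \emph{cone multiplicities} in $N\times\Z$ to $\alpha_iV$. However, $w$ lies at height $h=\sum_i\alpha_i$, an integer that is in general at least $2$. So $w$ is a lattice point of $h\Delta$, not of $\Delta$. Starring $h\Delta$ at $w$ gives simplices of $N$-volume $h^{d-1}\alpha_iV$ (with $d=\dim\Delta$), not $\alpha_iV$; equivalently, the corresponding pieces of $\Delta$ have $\frac{1}{h}N$-volume $h^{d-1}\alpha_iV$. Take the Reeve tetrahedron $\Delta=\Conv(0,e_1,e_2,e_1+e_2+2e_3)$. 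Here $V=2$, the only nonzero box point is $(1,1,1,2)=\frac12\sum_i(v_i,1)$, and the four starred pieces of $2\Delta$ have $N$-volume $4>2$.

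Cutting $m\Delta$ into the cells of the dicing $\lambda_i\in\Z$ (your ``translates of parallelepiped-type regions'') and refining each cell does not repair this either. For the Reeve tetrahedron the corner cells are lattice translates of $\Delta$, which is an empty simplex. Hence every lattice subdivision refining the dicing still contains simplices of volume $V$. In fact a triangulation of $2\Delta$ using all eleven of its lattice points can consist of the four corner copies of $\Delta$ plus eight unimodular tetrahedra. So ``coning or starring from interior points'' without a precise rule does not force volumes down.

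A volume-decreasing subdivision must cut across these cells. In this example, one can star $2\Delta$ at $(1,1,1)$ and then cone from that point over the unimodular triangulations of the dilated facets $2F_i$. Producing such subdivisions in general, with choices determined face by face so that they glue, is exactly the combinatorial core of the Knudsen--Mumford--Waterman argument. Your Stage 3 explicitly defers this (``requires the real combinatorial input of KKMS''). As it stands, the proposal is an outline of the induction with its key lemma missing.
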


  The compact subset of $X \cap N_\R$ to which we apply 
   \Cref{KKMSD unimodular triangulation:label} 
   is of the form $X \cap rQ_{ \{ 0 \} }$ ($ r \gg 0$), defined below.
  For each $1$-dimensional cone $l \in \Sigma_1$, 
   we put $v_l \in l \cap N$ the primitive vector. 
    Let $ r >0$ be a rational number.
    We put
    $$ rQ_{ \{ 0 \} } 
       := \bigcup_{ \sigma \in \Sigma} 
       \{ \sum_{ \sigma \supset l \in \Sigma_1 } 
           a_l v_l \mid 
               \sum_{ \sigma \supset l \in \Sigma_1 } 
                  a_l \leq  r,  \ 0 \leq a_l \leq r\} , $$
      an $n$-dimensional compact polyhedron, 
     and for a cone $\sigma \in \Sigma \setminus \{ \{ 0 \} \}$, 
    we also put 
    $$ rQ_{\sigma} := \{ \sum_{ \sigma \supset l \in \Sigma_1 } 
           a_l v_l \mid 
               \sum_{ \sigma \supset l \in \Sigma_1 } 
                  a_l = r, \ 0 \leq a_l \leq r  \} = \partial (rQ_{\{ 0 \}}) \cap \sigma. $$

    The polyhedral complex structure $ \{r Q_{\sigma}\}_{\sigma \in \Sigma }$ of $r Q_{ \{ 0 \} }$ 
    gives a stratification 
    $$N_\R = \bigsqcup_{\sigma \in \Sigma} \relint (r Q_{\sigma}) + \sigma,$$
    where the sums are the Minkowski sums. 
    A naive idea of the extension of a  polyhedral complex structure of 
    $X \cap r Q_{ \{ 0 \} }$ is 
    taking $ U + \sigma$ for $U \subset rQ_{\sigma}$ and $\sigma \in \Sigma$.
    However, 
    this does not work well. 
    Roughly speaking, this is 
    because $r Q_{\sigma} + \sigma$ is not full-dimensional.
    We thicken the lower-dimensional starata of 
    this stratification
    using 
     the barycentric subdivision $ r \hat{Q}$ of 
    the polyhedral complex structure $ \{r Q_{\sigma}\}_{\sigma \in \Sigma }$ of $r Q_{ \{ 0 \} }$. 
    The subdivision $ r \hat{Q}$ can be described explicitly as follows.
    There is a bijection between cones of $\Sigma$ and vertices of $r \hat{Q}$ given by 
     $$ \{ 0 \} \neq \sigma \mapsto \sum_{ \sigma \supset l \in \Sigma_1 } \frac{r}{\dim \sigma} v_l $$
     and $\Sigma \ni \{0 \} \mapsto 0 \in N_\R$.
     More generally, 
     there is a bijection 
     between flags $\sigma_{*}:=( \sigma_1 \subsetneq \dots  \subsetneq \sigma_s )$ in 
     $\Sigma$
     and polyhedra 
     $$P(\sigma_*) :=   \Conv \bigg\{ \sum_{ \sigma_i \supset l \in \Sigma_1 } \frac{r}{\dim \sigma_i} v_l 
     \bigg\}_{1\leq i\leq s} $$ in $r \hat{Q}$, 
     where $\Conv$ is the convex hull.
    For  $\sigma \in \Sigma$, 
     we put 
       $$ \overline{rP_{\sigma}} 
            := \bigcup_{ 
            \substack{ \sigma_* =(\sigma_1 \subsetneq \dots  ) \\ \sigma_1=\sigma } }
             P(\sigma_*), 
             \quad 
              rP_{\sigma}  := 
          \overline{rP_{\sigma}} 
           \setminus  \bigcup_{\sigma \subset \sigma' \in \Sigma}
          \overline{rP_{\sigma' }} .
          $$
          Then $rP_{\sigma}$
          is a neighborhood of 
          $\sum_{ \sigma \supset l \in \Sigma_1 } \frac{r}{\dim \sigma} v_l$
     in $\partial (rQ_{ \{0\} })$, 
     and 
      $
      \bigsqcup_{\sigma \supset \tau \in \Sigma \setminus \{ \{ 0 \} \}}  rP_{\tau}  $
      is a neighborhood of 
     $ rQ_{\sigma } $
     in $\partial (rQ_{ \{0\} })$.
      We have  
    stratifications 
      $$ rQ_{ \{ 0 \} } 
          = \bigsqcup_{\sigma \in \Sigma} rP_{\sigma} , 
          \quad 
        N_\R = 
           \bigsqcup_{\sigma \in \Sigma} rP_{\sigma} + \sigma .$$

    We put 
    $$\Psi_\tau \colon N_\R= \Hom (M,\R) \to 
      \Hom (M\cap \tau^{\perp} , \R) = N_{\tau,\R} \cong N_\R / \R \tau$$
    the natural map. 
    Then 
     $\Psi_{\tau}|_{rP_{\tau}} \colon rP_{\tau} \to N_{\tau,\R} $ is injective, 
     and 
     preserves lattice structures 
   because $\Sigma$ is unimodular,
     and 
   we have 
   $ \bigcup_{r >0} \Psi_{\tau} (rP_{\tau}) = N_{\tau,\R}$
    ($\tau \in \Sigma$) 
    (in particular, for any compact subset $C \subset N_{\tau, \R}$, we have  
    $  C \subset  \overline{rP_{\tau}+ \tau} $ for sufficently large $r >0 $).
  These properties are what we need to extend a  
   $\frac{1}{k}N$-unimodular
    $\Q$-rational polyhedral complex structure of 
    $X \cap r Q_{ \{ 0 \} }$ to that of $X$.

  \begin{lem}\label{lem for unimodular triangulation:label}
   Let $\sigma \in \Sigma \setminus \{ \{ 0 \} \} $ be a cone.
    Then we have 
    \begin{align*}
      rQ_{\sigma } +\sigma 
    \subset &
      \bigsqcup_{\sigma \supset \tau \in \Sigma \setminus \{ \{ 0 \} \}}  rP_{\tau} + \tau  , \\
     \sigma 
    \subset &
      \bigsqcup_{\sigma \supset \tau \in \Sigma }  rP_{\tau} + \tau .
    \end{align*}
  Moreover, 
   there exists $\epsilon >0$ such that 
  $$[-r\epsilon, r\epsilon]^n + \sigma 
    \subset 
      \bigsqcup_{\sigma \supset \tau \in \Sigma }  rP_{\tau} + \tau.
      $$
  \end{lem}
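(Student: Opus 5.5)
Throughout I work in the coordinates given by unimodularity. For a cone $\rho \in \Sigma$, every $x \in \rho$ is written uniquely as $x=\sum_{\rho \supset l \in \Sigma_1} a_l v_l$ with $a_l \geq 0$. The first step is to describe the strata explicitly in these coordinates. A point $y$ of the simplex $rQ_{\rho}$ (so $\sum a_l = r$) lies in a simplex $P(\sigma_*)$ of the barycentric subdivision. The smallest cone $\sigma_1$ of its carrier flag is the set of rays on which $a_l$ attains its maximum. Hence I expect
$$rP_\tau \cap \rho = \{\, y \in rQ_{\rho} \mid a_l = \max_{l'} a_{l'} \text{ for } l\subset\tau,\ a_l < \max_{l'} a_{l'} \text{ for } l \not\subset \tau \,\}$$
for $\{0\} \neq \tau \subset \rho$, and $rP_{\{0\}} = rQ_{\{0\}} \setminus \partial(rQ_{\{0\}})$. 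I will check this identification directly from the definition of $\overline{rP_\tau}$ via flags; it is the only place where the combinatorics of the barycentric subdivision enters.

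With this description, the first two inclusions reduce to a water-filling decomposition inside $\sigma$. Let $x=\sum a_l v_l \in \sigma$. If $\sum a_l < r$, then $x \in rP_{\{0\}}$. Otherwise consider $f(t) := \sum_l \min(a_l,t)$, which is continuous and nondecreasing, with $f(0)=0$ and $f(\max a_l)=\sum a_l \geq r$. Choose the smallest $t$ with $f(t)=r$ and set $\tau := \{ l \mid a_l \geq t\}$, which is nonempty. Put $y:=\sum_l \min(a_l,t) v_l$ and $z := x - y = \sum_{l \subset \tau}(a_l - t)v_l$. Then $y$ has coordinate sum $r$, equals $t$ exactly on the rays of $\tau$, and is $<t$ elsewhere, so $y \in rP_\tau$; also $z \in \tau$ and $\tau \subset \sigma$. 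For $x \in rQ_\sigma + \sigma$ we have $\sum a_l \geq r$, so $\tau \neq \{0\}$, which gives the first inclusion. The second inclusion is the same argument applied to all of $\sigma$. Disjointness is already part of the stated stratification $N_\R=\bigsqcup rP_\tau+\tau$.

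For the $\epsilon$-statement, I first rescale. Since $rP_\tau = r\cdot(1P_\tau)$ and cones are scale invariant, it suffices to treat $r=1$. Take $x = s + e$ with $s \in \sigma$ and $e \in [-\epsilon,\epsilon]^n$. By completeness $x$ lies in some cone $\rho \in \Sigma$, and I run the same water-filling in $\rho$. If $\sum a_l<1$ then $\tau=\{0\}$ and we are done. Otherwise $1=f(t) \leq n t$, so $t \geq 1/n$. The key estimate I need is $\sum_{l \subset \rho,\, l\not\subset\sigma} a_l \leq C\,\mathrm{dist}(x,\sigma) \leq C'\epsilon$ for a constant $C$ depending only on the finite fan. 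To prove it, fix $\rho$. The function $g(x):=\sum_{l\not\subset \sigma} a_l$ on $\rho$ is linear and nonnegative, and it vanishes exactly on $\rho\cap\sigma$, the face spanned by the rays common to both cones. The function $\mathrm{dist}(\cdot,\sigma)$ is continuous, positively homogeneous, and also vanishes exactly on $\rho \cap \sigma$. A compactness argument on $\rho \cap \{\lVert x\rVert=1\}$, applied to each of the finitely many $\rho$, then gives $g \leq C\,\mathrm{dist}(\cdot,\sigma)$. Choosing $\epsilon$ with $C'\epsilon < 1/n$ forces all coordinates off $\sigma$ to be $<t$, so $\tau \subset \rho\cap\sigma$ is a face of $\sigma$, and $x \in rP_\tau+\tau$.

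I expect the main obstacle to be this comparison constant. The homogeneity-plus-compactness step is delicate because a function vanishing on a subcone need not be bounded by the distance near the boundary of $\rho$. It works here only because both functions are positively homogeneous on the polyhedral cone $\rho$ and $g$ is linear; if compactness gives trouble, I will instead write $\mathrm{dist}$ as a minimum of finitely many linear-fractional expressions and compare the two functions directly.
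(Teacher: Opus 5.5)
Your argument for the first two inclusions is correct. It is more explicit than the paper, which only says the assertions follow easily from the construction and notes $rP_{\{0\}}=\mathrm{int}(rQ_{\{0\}})$. Your coordinate description of $rP_\tau$ is right: on the relative interior of $P(\sigma_*)$ the coordinates are constant on each level $\sigma_k\setminus\sigma_{k-1}$ and strictly decrease along the flag. The water-filling decomposition also works. Since $f$ is strictly increasing on $[0,\max a_l]$, it even shows that $x=y+z$ is unique, so you get disjointness without appealing to the stated stratification.

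The gap is the comparison $g\le C\,\mathrm{dist}(\cdot,\sigma)$ in the $\epsilon$-part. Compactness of $\rho\cap\{\lVert x\rVert=1\}$ yields nothing. Both $g$ and $\mathrm{dist}(\cdot,\sigma)$ vanish on $\rho\cap\sigma\cap\{\lVert x\rVert=1\}$, and that is exactly where the ratio has to be controlled.

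The features you invoke (positive homogeneity, polyhedral $\rho$, linear $g$) do not suffice on their own. On $\R_{\geq0}^2$ the continuous, positively homogeneous function $x_2^2/\lVert x\rVert$ vanishes exactly where $x_2$ does, yet $x_2$ is not bounded by any multiple of it. What is missing is information on how $\rho$ and $\sigma$ meet, and the fallback with linear-fractional expressions is not yet an argument.

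The standard tool is the separation lemma for two cones of a fan. There is $m_\rho\in M_\R$ with $m_\rho\geq0$ on $\rho$, $m_\rho\leq0$ on $\sigma$, and $\rho\cap m_\rho^{\perp}=\rho\cap\sigma$. The argument then runs as follows.

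First, $m_\rho(v_l)>0$ for each ray $l\subset\rho$ with $l\not\subset\sigma$. Hence $g\leq c_\rho^{-1}m_\rho$ on $\rho$, where $c_\rho:=\min_l m_\rho(v_l)>0$.

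Second, for $x=s+e\in\rho$ with $s\in\sigma$ and $e\in[-\epsilon,\epsilon]^n$, you get $m_\rho(x)=m_\rho(s)+m_\rho(e)\leq m_\rho(e)\leq \sqrt{n}\,\lVert m_\rho\rVert\,\epsilon$.

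This bounds every $a_l$ with $l\not\subset\sigma$ by a constant times $\epsilon$, uniformly over the finitely many $\rho$, and no distance function is needed. The rest of your argument goes through unchanged: $t\geq 1/n$, so $\tau\subset\rho\cap\sigma$ is a face of $\sigma$. Alternatively, Hoffman's error bound $\mathrm{dist}(x,\rho\cap\sigma)\leq C\,\mathrm{dist}(x,\sigma)$ for $x\in\rho$ would also suffice, since $g$ is linear and vanishes on $\rho\cap\sigma$.
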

  \begin{proof}
  The assertions easily follow from the construction.
    (Note that $rP_{ \{ 0 \} } = \mathrm{int}  (rQ_{ \{ 0 \} })$.)
    \end{proof}

  \begin{exam}\label{exam for unimodular triangulation:label}
   Let $\sigma \in \Sigma$ be of the form
     $$\sigma = \R_{\geq 0}^2 \times \{ 0 \}^{n-2}  \subset \R^n = N_\R, $$
    and $ \sigma \supset l_1,l_2 \in \Sigma$ 
     $$l_1 = \R_{\geq 0} \times \{ 0 \}^{n-1}    , \quad 
     l_2 = \{ 0 \} \times \R_{\geq 0} \times \{ 0 \}^{n-1}  . $$
    Then we have 
    \begin{align*}
     rQ_{\sigma } 
    = & \{  (x_i)_{ i  } \in \R^n 
     \mid 
                  x_1+x_2 = r  , \
                   x_i \geq 0 \ (i=1,2), \ 
                    x_j =0 \ (j \geq 3)
                    \} ,  \\
     rQ_{\sigma } \cap rP_{l_a} 
    =  &\{  (x_i)_{ i  } \in rQ_{\sigma } 
     \mid 
                    \frac{1}{2} r <  x_a \leq r 
                    \}  
                    \quad (a=1,2) ,  \\
     rQ_{\sigma } \cap rP_{\sigma} 
    =&  \{ (\frac{1}{2} r, \frac{1}{2} r, 0, \dots, 0 )
                    \}    .
      \end{align*}
      Hence we have $$
     rQ_{\sigma } 
    \subset  
       rP_{l_1} \sqcup
       rP_{l_2} \sqcup
       rP_{\sigma}
        .$$
    We also have 
    \begin{align*}
     rQ_{\sigma } + \sigma 
    = & \{  (x_i)_{ i  } \in \R^n 
     \mid 
                  x_1+x_2 \geq  r  , \
                   x_i \geq 0 \ (i=1,2), \ 
                    x_j =0 \ (j \geq 3)
                    \} ,  \\
     (rQ_{\sigma } + \sigma) \cap ( rP_{l_{a}} +l_{a})
    =  &\{  (x_i)_{ i  } \in rQ_{\sigma } + \sigma 
     \mid 
                    0 \leq   x_{b}  < \frac{1}{2} r 
                    \}  \quad  (\{a,b \}=\{1,2 \}) ,  \\
     (rQ_{\sigma } + \sigma) \cap (  rP_{\sigma} + \sigma)
    =&  \{  (x_i)_{ i  } \in rQ_{\sigma } + \sigma 
     \mid 
                  x_i \geq \frac{1}{2} r \ (i=1,2) 
                    \}    .
      \end{align*}
      Hence we have $$
     rQ_{\sigma } + \sigma
    \subset  
       (rP_{l_1} + l_1 ) \sqcup
       (rP_{l_2} + l_2  )\sqcup
       (rP_{\sigma} + \sigma )
        .$$
    We also have 
    $$ rP_{ \{ 0 \} } \supset 
        \{  (x_i)_{ i  } \in \R^n 
     \mid 
                  x_1+x_2 < r  , \
                   x_i \geq 0 \ (i=1,2), \ 
                    x_j =0 \ (j \geq 3)
                    \} .$$
      Hence we have $$
      \sigma
    \subset  
       rP_{ \{ 0 \} } \sqcup 
      ( rP_{l_1} + l_1  )\sqcup
      ( rP_{l_2} + l_2  )\sqcup
      ( rP_{\sigma} + \sigma )
        .$$
  \end{exam}

  \begin{lem}\label{existence of nice dual polytope}
   For a sufficiently large integer $r>0$, 
   we have 
      $$ X \cap N_\R = \bigsqcup_{
         \tau \in \Sigma   } 
      ( X \cap r P_{\tau })  + \tau. $$
    \end{lem}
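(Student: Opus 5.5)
We prove the two inclusions separately. The inclusion ``$\supset$'' is where regularity at infinity enters, since it is a statement about recession directions. The inclusion ``$\subseteq$'' (together with disjointness) comes almost for free from the stratification $N_\R = \bigsqcup_{\sigma\in\Sigma} rP_\sigma + \sigma$ recalled above.

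\emph{The inclusion ``$\subseteq$''.} Let $x \in X\cap N_\R$. By the stratification $N_\R = \bigsqcup_{\tau} rP_{\tau}+\tau$, there is a unique $\tau$ with $x = y + c$, where $y \in rP_\tau$ and $c\in\tau$. This already gives disjointness of the union. We must show $y\in X$ once $r$ is large. The point $y$ lies on the segment from $x$ back toward $rP_\tau$ in direction $-c$, with $c\in \tau$. Regularity at infinity says the following: near every point $z$ of $X \cap N_{\tau,\R}$, in the chart $\Trop(T_\tau)\cong(\R\cup\{\infty\})^{\dim\tau}\times N_{\tau,\R}$, the set $X$ is a product $(\R\cup\{\infty\})^{\dim \tau}\times (X\cap N_{\tau,\R})$. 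So there is a neighborhood of the boundary stratum on which $X$ is invariant under translations by $\pm\tau$ that stay inside that neighborhood.

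We make this quantitative. $X$ is a finite union of polyhedra $\overline{C}$. For each polyhedron $C\subset N_\R$ in $X$ and each $\tau$, write $C = \Conv(\text{vertices}) + \rec(C)$. Regularity at infinity, applied at the points of $\overline{C}\cap N_{\tau,\R}$, should give the following claim: there is $r_0$ such that if $x\in X\cap N_\R$ with $x\in rP_\tau+\tau$ and $r\ge r_0$, then $x - \tau \cap (\text{the relevant ray}) \subset X$ as long as we stay within $rP_\tau + \tau$.

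\emph{Key step (main obstacle).} The key step, and the one I expect to be the main obstacle, is proving this uniform product structure. I would argue by contradiction using compactness of $\Trop(T_\Sigma)$ ($\Sigma$ is complete). Suppose there are $r_m\to\infty$ and points $x_m = y_m + c_m$ as above with $y_m\notin X$. Since $\Psi_\tau$ is injective on $rP_\tau$ and lattice-preserving, the points $r_m P_\tau + \tau$ accumulate in $\Trop(T_\Sigma)$ only at points of strata $N_{\tau',\R}$ with $\tau'\supseteq\tau$. Pass to a subsequence converging to some $z\in \overline{N_{\tau,\R}}$. Then $z\in X$, because $X$ is closed. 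At $z$, the local product description from regularity at infinity, in a neighborhood $U_z$, contains both $x_m$ and $y_m$ for $m\gg0$. This uses that $y_m$ and $x_m$ differ by an element of $\tau$ and that $y_m$ lies in $r_mP_\tau$, so it also goes to infinity along $\tau$ directions: $y_m$ is a bounded distance from $\frac{r_m}{\dim\tau}\sum v_l$ in the $\tau$-directions relative to its $N_{\tau,\R}$-coordinate. Hence $y_m\in X$, a contradiction. The delicate point is to check that $y_m$ indeed converges to the same $z$, i.e.\ that its $\tau$-coordinates go to $\infty$. For this I would use that $rP_\tau$ is a neighborhood of the barycenter of $rQ_\tau$, so its $v_l$-coefficients for $l\subset\tau$ are $\ge r\cdot\delta$ for some fixed $\delta>0$. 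This is the scaling property visible in \Cref{exam for unimodular triangulation:label}, and it is combined with \Cref{lem for unimodular triangulation:label} to control the case where $z$ lies in a larger stratum $\tau'\supsetneq\tau$.

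\emph{The inclusion ``$\supset$''.} Conversely, let $y\in X\cap rP_\tau$ and $c\in\tau$. The same compactness argument, applied to $y$ and $y+c$ (now deducing $y+c\in X$ from $y\in X$), shows $y+c\in X$ for $r\ge r_0$. Taking $r$ larger than both thresholds, and restricting to integers, gives the lemma.
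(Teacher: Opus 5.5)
Your argument is correct, but it takes a different route from the paper's proof.

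\textbf{The paper's route.} The paper uses the fact, asserted without proof, that $X\cap N_\R$ is a finite union of Minkowski sums $Q+\sigma$ with $Q$ compact and $\sigma\in\Sigma$; this is the only place regularity at infinity enters. It then reduces to the inclusion $(Q+\sigma)\cap(rP_\tau+\tau)\subset((Q+\sigma)\cap rP_\tau)+\tau$ for $\tau\subset\sigma$, and proves this by an explicit estimate. Write $x\in rP_\tau$ in a cone $\tau+\sum_{j\in J}l_j$. The cone $\sum_{j\in K}\Psi_\tau(l_j)$ is strictly convex and meets $\Psi_\tau(\sigma)$ only in $0$. This bounds the coefficients $a_j$ ($j\in K$) by a constant $A(R)$, where $Q\subset[-R,R]^n$. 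Hence for $r\gg0$ the $\tau$-coordinates of $x$ exceed $R$, and $x-q-b_\mu\in\R\tau$ is forced into $\tau$.

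\textbf{Your route.} You use regularity at infinity in its local form together with compactness of $\Trop(T_\Sigma)$, arguing by contradiction along a sequence $r_m\to\infty$. This avoids both the decomposition into pieces $Q+\sigma$ and the convexity estimate. It also handles both inclusions by the same mechanism, whereas the paper leaves the reverse inclusion for a union of several pieces implicit. The cost is that you get no explicit threshold for $r$; the paper's threshold depends only on $R$ and $A(R)$.

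\textbf{Points to state more carefully.}

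First, the lower bound on the $\tau$-coefficients does not follow from $rP_\tau$ being a neighborhood of a barycenter. It follows from the construction. Every point of $\overline{rP_\tau}$ lies in some $P(\sigma_*)$ with $\sigma_1=\tau$. It is therefore a convex combination of the points $\frac{r}{\dim\sigma_i}\sum_{\sigma_i\supset l\in\Sigma_1}v_l$, all of which lie in $\sigma_s$. So each $v_l$-coefficient with $l\subset\tau$ is at least $r/n$. This bound is also what shows that the limit stratum $\tau'$ contains $\tau$.

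Second, take $U_z$ of product form $(S,\infty]^{\dim\tau'}\times B$ in the chart of $\Trop(T_{\tau'})$. Use coordinates adapted to a cone $\mu\supseteq\tau'$ containing all the points, after passing to a subsequence. This is allowed because shrinking $U_z$ preserves the identity in Definition \ref{def:regular at infinity}. Since $\tau\subset\tau'$, adding $c\in\tau$ leaves $\Psi_{\tau'}$ unchanged and does not decrease these coordinates. Consequently:
\begin{itemize}
\item In the reverse inclusion, $y_m+c_m\in U_z$ automatically, even though $c_m$ is unbounded.
\item In the forward inclusion, $y_m\in U_z$ follows from the bound $r_m/n$.
\end{itemize}
With this in place, the case of a larger limit stratum needs no appeal to Lemma \ref{lem for unimodular triangulation:label}, and the injectivity of $\Psi_\tau$ on $rP_\tau$ plays no role.
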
  
   \begin{proof}
     Let $Q \subset N_\R$ be a 
     compact polyhedron, 
        and  $\sigma \in \Sigma$.
    By \Cref{lem for unimodular triangulation:label}, 
    for a sufficiently large $r >0$, 
     we have  
     $$ Q + \sigma \subset 
      \bigsqcup_{\sigma \supset \tau \in \Sigma }  rP_{\tau} + \tau, $$
    i.e., we have 
     $$ Q + \sigma = 
      \bigsqcup_{\sigma \supset \tau \in \Sigma } 
      (Q + \sigma) \cap ( rP_{\tau} + \tau). $$
      Since $X$ is a union of subsets of the form $Q + \sigma$, 
    we may assume that 
     $X$ is the closure of 
     a single Minkowski sum $ Q + \sigma $.
     Therefore 
    it suffices to show that 
      $$(Q + \sigma) \cap ( rP_{\tau} + \tau) \subset  
      ( (Q + \sigma) \cap  rP_{\tau} ) + \tau $$
    ($ \sigma \supset \tau \in \Sigma$).

     We fix $ \tau \in \Sigma$ contained in $\sigma$.
     We may assume that 
     \begin{align*}
     \sigma = &  \R_{\geq 0}^{\dim \sigma} \times \{ 0 \}^{n-\dim \sigma}  \subset \R^n = N_\R  \\
     \tau = & \R_{\geq 0}^{\dim \tau} \times \{  0\}^{n-\dim \tau} \subset \sigma .
     \end{align*}
    We put 
    $$\mu := \{ 0 \}^{\dim \tau } \times \R_{\geq 0}^{\dim \sigma - \dim \tau } \times \{ 0 \}^{n-\dim \sigma} \in \Sigma.$$
     Let 
     $$y =q+a_{\tau,1} + b_\mu
         = x + a_{\tau,2} \in (Q + \sigma) \cap ( rP_{\tau} + \tau) $$
     ($q \in Q$, $a_{\tau,i} \in \tau$ ($i =1,2$), $b_\mu \in \mu$, $x \in rP_\tau$) be an element. 
     It is enough to show 
      that for a sufficiently large $r >0$, 
      we have  $x-q-b_\mu = a_{\tau,1}-a_{\tau,2} \in \tau$, i.e., 
      $$y= x +a_{\tau,2} = 
      (q+b_{\mu}+ (a_{\tau,1}-a_{\tau,2}) )+a_{\tau,2} \in ( (Q + \sigma) \cap  rP_{\tau} ) + \tau 
      .$$

     This is not difficult to see, 
     but for convenience, we give a detailed proof.
      Let $R>0$ be such that $Q \subset [-R,R]^n$.
      Then since $x \in q+b_\mu + \R \tau$, we have 
      $$x =(x_i)_{1 \leq i \leq n} \in rP_{\tau} \cap ( \R^{\dim \tau } \times [-R,\infty)^{\dim \sigma - \dim \tau} \times [-R,R]^{n -\dim \sigma}).$$
      There are $1$-dimensional cones $l_j \in \Sigma_1$ ($j \in J$) 
         such that 
         $\tau + \sum_{ j \in J } l_j  $ 
         is a cone in $\Sigma$ of dimension $(\dim \tau +s)$,
         and 
      $$x = \sum_{ \tau \supset l \in \Sigma_1 } 
        a_l v_l + \sum_{ j \in J } a_j v_{l_j}$$
        for some $a_l, a_j >0 $ 
        (with 
      $ \sum_{ \tau \supset l \in \Sigma_1 } 
        a_l  + \sum_{ j \in J } a_j =r$).
     We have   
      $$\Psi_{\tau} (x) = \sum_{  j \in J } a_j \Psi_{\tau} (v_{l_j}) \in 
      [-R,\infty)^{\dim \sigma - \dim \tau} \times [-R,R]^{n -\dim \sigma}.$$
     (Note that the natural map 
     $\Psi_{\tau} \colon \R^n = N_\R \to N_{\tau,\R} = \R^{n-\dim \tau }$ is 
     the projection to the last $(n-\dim \tau)$-components.)
      Let $\Star_{\Sigma} \tau 
           := \{ \Psi_{\tau} (\nu)  \}_{\tau \subset \nu \in \Sigma }$ 
            be the star fan in $\R^{n-\dim \tau}$.
      Let $K \subset J$ be the subset consisting of  $j $ with $l_j \not\subset \sigma$.
      Since both
      $$ \sum_{j \in K} 
      \Psi_{\tau}(l_j), 
      \ 
      \Psi_\tau (\sigma) = 
      \R_{\geq 0}^{\dim \sigma - \dim \tau} \times \{0\}^{n -\dim \sigma} $$
      are 
      cones in $ \Star_{\Sigma} \tau$, 
      we have 
      $$\bigg( \sum_{j \in K} 
      \Psi_{\tau}(l_j)  \bigg)
       \cap 
      (\R_{\geq 0}^{\dim \sigma - \dim \tau} \times \{0\}^{n -\dim \sigma} ) 
       = \{ (0, \dots,0) \}. $$
      Hence
      by
      the strictly convexity of $\sum_{ j \in K } 
      \Psi_{\tau}(l_j) $, 
      there exists $A(R)>0$ 
      such that for any 
      $$ \sum_{  j \in J } b_j \Psi_{\tau} (v_{l_j})
       \in 
       \bigg(\sum_{j \in K} 
      \Psi_{\tau}(l_j)  \bigg)
      \cap 
      ([-R,\infty)^{\dim \sigma - \dim \tau} \times [-R,R]^{n -\dim \sigma}),  $$
      we have 
     $b_j < A(R)$ $(j \in K)$.
     In particular, we have 
     $a_j < A(R)$ $(j \in K)$.
       Therefore  
      the $i$-th component of $a_j v_{l_j}$ 
     ($1 \leq i \leq \dim \tau$, $j \in J$)
      is bounded by constant. 
      Hence for sufficiently large $r>0$, 
      we have $x_i >R$ for $1 \leq i \leq \dim \tau$.
      Since 
      $x - q-b_\mu \in \R \tau$, 
      this means that 
      $x - q- b_\mu \in \tau$.
      We have finished the proof.
   \end{proof}

  \begin{proof}{(Proof of Proposition \ref{the existence of unimodular triangulation})}
    Let $r \in \Q_{>0}$ be as in Lemma \ref{existence of nice dual polytope}. 
    By Theorem \ref{KKMSD unimodular triangulation:label},  
    there exist
     $k \in \Z_{\geq 1}$
     and 
    a $\frac{1}{k}N$-unimodular polyhedral complex structure 
    $\Lambda_{ rQ_{ \{ 0 \} }  }$
       finer than 
         the barycentric subdivision $ r \hat{Q}$ of $rQ_{ \{ 0 \} }$
         such that 
      $X \cap rQ_{ \{ 0 \} }$
      is the support of a subcomplex 
      $\Lambda_{X \cap rQ_{ \{ 0 \} } }$ of $\Lambda_{rQ_{ \{ 0 \} }}$.
      Then since the injective map 
      $ \Psi_{\tau}|_{r P_\tau} \colon  r P_\tau \to N_{\tau, \R}$  ($\tau \in \Sigma$) preserves lattice structures, 
   by 
     Lemma \ref{existence of nice dual polytope}, 
      a  polyhedral complex structure 
    $$ 
     \Lambda_X:= 
       \big\{  \overline{U +\tau} \cap \overline{N_{\nu,\R}}  
      \big\}_{
         \substack{ \nu, \tau \in \Sigma , 
               \ \nu \subset \tau 
          \\
             \overline{rP_{\tau  }} 
            \supset U \in \Lambda_{X \cap rQ_{ \{ 0 \} } }
               }}
     $$
     of $X$ is $\frac{1}{k}N$-unimodular by definition, 
     where the closures are taken in $\Trop (T_{\Sigma})$.
     For $U$ and $\tau$ as above, 
     there is a 
      flag $\sigma_{*}= (\sigma_1 \subsetneq \dots  \subsetneq \sigma_s ) $ in 
     $\Sigma$ 
     such that 
     $\sigma_1=\tau$ 
     and $U \subset P(\sigma_*)$.
     Then we have 
     $ R +\tau \subset P(\sigma_*)+\sigma_1 \subset \sigma_s$.
     Hence  $\Lambda_X |_{N_\R}$
     is finer than $\Sigma$.
  \end{proof}

  Proposition \ref{the existence of unimodular triangulation}
  was proved only for $T_{\Sigma} =\P^n$ in 
  \cite{ItenbergKatzarkovMikhalkinZharkovTropicalhomology2019}.
   This is why 
  \cite[Theorem 1]{ItenbergKatzarkovMikhalkinZharkovTropicalhomology2019}
  (isomorphism (\ref{eq tro coh = LMHS}) in \Cref{sec:introduction})
  and its generalization 
  \cite[Theorem 7.1]{AksnesAminiPiquerezShawCOHOMOLOGICALLYTROPICALVARIETIES2025} 
  are considered only for maximally degenerate families 
  in $\P^n$.
  Therefore, 
  as an application 
   of Proposition \ref{the existence of unimodular triangulation}, 
  we can generalize the above results 
  to the case of maximally degenerate families in a smooth proper toric variety $T_{\Sigma}$ .
  Only Proposition \ref{the existence of unimodular triangulation} is new, but for the convenience, we state theorem 
  (\Cref{cohomology comparison in sm projective toric varieties:label}) explicitly.
  
  Let  
   $\mathcal{Z} = (Z_t)_{t \in \mathcal{D}^*} \subset T_{\Sigma} \times \mathcal{D}^{*}$ 
   be a family 
   of smooth complex varieties 
   over the punctured disk $\mathcal{D}^*$
   such that $\Trop(\mathcal{Z}_{\mathbb{Z}((t))})$ is regular at infinity.
   By Proposition \ref{the existence of unimodular triangulation}, 
   there is a 
   $\frac{1}{k}N$-unimodular
    $\Q$-rational polyhedral complex strucuture $\Lambda$
    of $\Trop(\mathcal{Z}_{\mathbb{C}((t))})$.
   After a base change, 
   by proof of semi-stable reduction theorem (\cite{KempfKnudsenMumfordSaint-DonatToroidalEmbeddings11973}), 
   this $\Lambda$ gives 
   a strictly semi-stable reduction 
    $Z_0 = \bigcup_{i \in I} Z_i$ 
   of $\mathcal{Z}$ (see [loc.cit.], 
  \cite{ItenbergKatzarkovMikhalkinZharkovTropicalhomology2019}
  and 
  \cite{AksnesAminiPiquerezShawCOHOMOLOGICALLYTROPICALVARIETIES2025} 
  for details).

   We assume that 
   $$ 
    \bigoplus_{p+q=r} 
    H^{p,q}_{\Trop} \big( \Trop \big( \bigcap_{j \in J} Z_j \big); \Q \big) \cong 
      H^{r}_{\sing} \big( \bigcap_{j \in J} Z_j ; \Q \big) $$
      ($r \geq 0$, $J \subset I$).
  (See \Cref{sec:tropical_cohomology} for tropical cohomology.) 
      This (iso)morphism is defined in \cite[Definition 3.1]{AksnesAminiPiquerezShawCOHOMOLOGICALLYTROPICALVARIETIES2025}.  
      (At least, in this case) this morphism coincides with the one constructed in a more cohomological way in \cite{MikamiDifferentialformsandcohomologyintropicalandcomplexgeometry2021} and \cite[Section 7]{MikamiTropicalIntersectionHomology2024}.
      Under this assumption, 
      $\Gr_{2p}^W H^{p+q} (Z_{\infty})$ is of pure $(p,p)$-type,  and 
   the odd weight quotients vanish. 

  \begin{thm}[\cite{ItenbergKatzarkovMikhalkinZharkovTropicalhomology2019} 
  when $X$ is smooth and $T_{\Sigma} =\P^n$,
   \cite{AksnesAminiPiquerezShawCOHOMOLOGICALLYTROPICALVARIETIES2025} 
   when $T_{\Sigma} =\P^n$] \label{cohomology comparison in sm projective toric varieties:label}
   
   In the above situation, 
   we have 
   \begin{align*}
   H^{p,q}_{\Trop}(
    \Trop(\mathcal{Z}_{\mathbb{C}((t))}))
   \cong 
   \Gr_{2p}^W H^{p+q} (Z_{\infty}).
   \end{align*}
  \end{thm}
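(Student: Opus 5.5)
The plan is to combine the tropical monodromy-weight spectral sequence of \Cref{introduction spectral sequence for ss reduction} with Steenbrink's geometric monodromy-weight spectral sequence for the strictly semi-stable reduction $Z_0=\bigcup_{i\in I}Z_i$, and to compare them term by term. The first step is to set up the geometry. \Cref{the existence of unimodular triangulation} gives a $\frac{1}{k}N$-unimodular structure $\Lambda$ on $X:=\Trop(\mathcal{Z}_{\mathbb{C}((t))})$ whose restriction to $N_\R$ is finer than $\Sigma$. Replacing $t$ by $t^{1/k}$ (a base change of $\mathcal{D}^*$) turns this into an $N$-unimodular structure. Then the KKMSD construction produces a strictly semi-stable model whose special fiber components $Z_i$ correspond to the vertices of $\Lambda\cap N_\R$, in the same way the components $X_i$ of the tropical reduction $X_\infty$ do.

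The key point to verify is that, for every $J\subset I$, the tropicalization of the stratum closure satisfies $\Trop(\bigcap_{j\in J}Z_j)=\bigcap_{j\in J}X_j$, compatibly with the face structure. Both sides are built from the star of the cell of $\Lambda$ spanned by $J$, with its closure in the appropriate toric compactification. Granting this, the hypothesis
\[
\bigoplus_{p+q=r}H^{p,q}_{\Trop}\bigl(\bigcap_{j\in J}X_j\bigr)\cong H^r_{\sing}\bigl(\bigcap_{j\in J}Z_j\bigr)
\]
identifies the $E_1$-terms of the two spectral sequences, after taking the $(p,p)$-parts on the complex side. Those are the only nonzero parts, as remarked before the theorem.

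Next I would check that these identifications intertwine the differentials $d_1$. On both sides $d_1$ is an alternating sum of restriction maps and Gysin maps, with signs fixed in \Cref{differential of monodromy-weight spectral sequence:label}. So it suffices to know that the comparison morphism of Aksnes--Amini--Piquerez--Shaw commutes with pullbacks and Gysin maps along the inclusions $\bigcap_{J'}X_j\subset\bigcap_JX_j$ with $J\subset J'$. For pullbacks this follows from functoriality. For Gysin maps I would reduce, via the cohomological description in Mikami's earlier work, to compatibility with cycle classes of toric boundary divisors. I expect this, together with matching the sign conventions, to be the main obstacle. My first attempt would be to deduce it from compatibility with cup products and Poincar\'e duality on each stratum.

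Finally, Steenbrink's sequence degenerates at $E_2$ by weight reasons, and $E_2^{-k,r+k}\cong\Gr^W_{r+k}H^r(Z_\infty)$. Under the hypothesis, all odd-weight terms vanish, and the complex is concentrated in the $(p,p)$-pieces. Hence the tropical sequence with index $p$ also has $E_2$-terms isomorphic to $\Gr^W_{2p}$ of the corresponding graded pieces. Degeneration transfers through the $E_1$-isomorphism, since the $E_2$-terms agree. Comparing the abutments graded piece by graded piece then gives $H^{p,q}_{\Trop}(X)\cong\Gr^W_{2p}H^{p+q}(Z_\infty)$, exactly as in \cite{ItenbergKatzarkovMikhalkinZharkovTropicalhomology2019}, with $\P^n$ replaced by $T_\Sigma$ using \Cref{the existence of unimodular triangulation}.
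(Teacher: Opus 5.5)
Your proposal follows essentially the same route as the paper, which simply refers to the spectral-sequence comparison of \cite{ItenbergKatzarkovMikhalkinZharkovTropicalhomology2019} and \cite{AksnesAminiPiquerezShawCOHOMOLOGICALLYTROPICALVARIETIES2025}: match the $E_1$-terms and the $d_1$ (pullbacks and Gysin maps) of Steenbrink's sequence with those of the tropical one, with \Cref{the existence of unimodular triangulation} as the only new input for general $T_{\Sigma}$. One justification needs fixing: the tropical sequence does not degenerate at $E_2$ \emph{because} its $E_2$-terms agree with Steenbrink's, since isomorphic $E_2$-pages do not force degeneration; it degenerates because its $E_1$-page is identified with the $(r,r)$-parts of Steenbrink's $E_1$-terms, which are pure of weight $q$, so it is concentrated in the single row $q=2r$ and all $d_s$ with $s\geq 2$ vanish.
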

  In particular, 
   the hodge numbers of general members $Z_t$
   equal the tropical hodge numbers of $
    \Trop(\mathcal{Z}_{\mathbb{C}((t))})$.
    \begin{proof}
    Proof is the same as 
   \cite[Theorem 1]{ItenbergKatzarkovMikhalkinZharkovTropicalhomology2019}
   and 
   \cite[Theorem 7.1]{AksnesAminiPiquerezShawCOHOMOLOGICALLYTROPICALVARIETIES2025}.
   (See also \Cref{sec:introduction}.)
    \end{proof}

\section{Tropical cohomology}\label{sec:tropical_cohomology}
  In this section, we recall tropical cohomology \cite{ItenbergKatzarkovMikhalkinZharkovTropicalhomology2019}, 
   and Amini-Piquerez's complex \cite{AminiPiquerezHomologyoftropicalfans2021}, \cite{AminiPiquerezTropicalFeichtner-Yuzvinskyandpositivitycriterionforfans2024}, 
   and Gysin maps \cite{AminiPiquerezHodgetheoryfortropicalvarieties2020},
   which will be used to describe     
     the differential $d_1 $ in \Cref{introduction spectral sequence for ss reduction} (\Cref{differential of monodromy-weight spectral sequence:label}, see also \Cref{sign of log spectral sequence:label}). 

   Recall that 
   $M$ is a free $\Z$-module of finite rank and $N:=\Hom(M,\Z)$.

 \subsection{Tropical cohomology}\label{subsec tropical cohomology}
   In this subsection, we briefly recall tropical analog of singular cohomology, \textit{tropical cohomology}, 
    introduced by Itenberg-Katzarkov-Mikhalkin-Zharkov \cite{ItenbergKatzarkovMikhalkinZharkovTropicalhomology2019}.
  See also 
   \cite{MikhalkinZharkovTropicaleignewaveandintermediatejacobians2014},
   \cite{JellShawSmackaSuperformstropicalcohomologyandPoincarduality2019}, 
   \cite{AminiPiquerezHodgetheoryfortropicalvarieties2020}, \cite{GrossShokirehAsheaf-theoreticapproachtotropicalhomology23}.
  Let $T_{\Sigma}$ be the toric variety corresponding to a fan $\Sigma$ in $N_\R$.
  Let $\Lambda $ be a $\Q$-rational polyhedral complex in $\Trop(T_{\Sigma})$.
  Recall that
  for $P \in \Lambda$,
  we put $\sigma_P \in \Sigma$ the cone such that $\relint(P) \subset N_{\sigma_P,\R}$, 
  and $\Tan_\Q P := \Tan_\Q (P \cap N_{\sigma_P,\R})$.
  We put $X:= \lvert \Lambda \rvert$.
  
  Let $p \geq 0$ be a non-negative integer.
  For $P \in \Lambda$,
  we put 
  $$F_p^X(P):= \sum_{\substack{P \subset P' \in \Lambda
     \\ \sigma_{P'} = \sigma_P }}
      \bigwedge^p \Tan_\Q (P') \subset \bigwedge^p N_{\sigma_P,\Q},$$ 
  and  put
  $$F^p_X(P):= \bigwedge^p  (M \cap \sigma_P^{\perp} )_\Q 
  \big/ 
  \big\{f \in \bigwedge^p (M \cap \sigma_P^{\perp})_\Q 
  \big| \alpha(f)=0 \ (\alpha \in F_p^X(P))
  \big\} $$
  the dual of $F_p^X (P)$,
  where we identify $$\bigwedge^p N_{\sigma_P,\Q} \cong \Hom \big(\bigwedge^p  (M \cap \sigma_P^{\perp} )_\Q, \Q \big).$$
  Note that $F_p^X (P)$ and $F^p_X (P)$ does not depend on the $\Q$-rational polyhedral complex structure $\Lambda$ 
     of $X$. 
  When there is no confusion, we simply write $F_p(P)$ (resp.\ $F^p(P)$).
  
  Let $P_1,P_2 \in \Lambda$ with $P_2 \subset P_1$.
  Then we have $\sigma_{P_1} \subset \sigma_{P_2}$.
  The restriction map 
   $$ N_{\sigma_{P_1},\R } 
    = \Hom ( M \cap \sigma_{P_1}^{\perp } , \R ) 
   \twoheadrightarrow N_{\sigma_{P_2},\R }
    = \Hom ( M \cap \sigma_{P_2}^{\perp } , \R ) 
    $$ induces
  a morphism 
  $  F_p(P_1)  
      \to F_p(P_2).$
  Hence we get the dual morphism 
  $ i_{P_2 \subset P_1 } \colon  F^p(P_2)  
      \to F^p(P_1).$
  Consequently, we get a constructible sheaf $\F^p_X$ on $X$
   with respect to $\Lambda$
     such that for  each $P \in \Lambda $, 
        the stalk $\F^p_{X,x}$ is isomorphic to $F^p ( P )$ 
         ($x \in P$) 
         and restrictions of sections are given by 
      $ i_{P_2 \subset P_1 } $ ($P_2 \subset P_1$). 
     (See 
     \cite[Section 2.4]{MikhalkinZharkovTropicaleignewaveandintermediatejacobians2014} or 
     \cite[Section 3.1]{JellShawSmackaSuperformstropicalcohomologyandPoincarduality2019} for details.)   
  We put 
  $$H_{\Trop}^{p,q}(X ) := H^{ q}(X,  \F^p_X), $$
  \emph{tropical cohomology}. 
  For a close subset $j \colon D \hookrightarrow X$, 
  we put 
   $H_{\Trop,D}^{p,q}(X)$ the cohomology group 
   of $j^!\F_X^p$.

  We have the usual wedge product 
    $$ -\wedge - \colon \bigwedge^p M  \otimes \bigwedge^q M   
        \to \bigwedge^{p+q} M  $$
        given by 
    $$ m_1 \wedge \dots \wedge m_p \otimes
       m_{p+1}  \wedge \dots \wedge m_{p+q} 
       \mapsto 
     m_1 \wedge \dots \wedge m_{p+q} .$$ 
  This induces a product 
    $$  (-1)^{pq} \cdot  - \wedge - \colon \F^p_X [-p] \otimes \F^q_X [-q] 
     \ni \alpha \otimes \beta 
     \mapsto 
     (-1)^{pq} \alpha \wedge \beta 
     \in \F^{p+q}_X [-p-q]$$
  and hence a cup product 
    $$ - \cup - \colon H^{p,r}_{\Trop} (X) \times H^{q,s}_{\Trop} (X) 
       \to H^{p+q,r+s}_{\Trop} (X).$$

  For another tropical variety $Y$ contained in another tropical toric variety $\Trop (T_{\Sigma_Y})$, 
   the set-theoretical product $X \times Y \subset \Trop (T_{\Sigma} \times T_{\Sigma_Y})$
   has a natural structure of a tropical variety given by products of weights. 
  We obviously have an isomorphism 
   \begin{align}\label{eq. Kunneth F level}
     ((-1)^{pq} \cdot  - \wedge -)_{p,q} 
    \colon 
     \bigoplus_{ p + q = r } \F^p_X [-p] \boxtimes \F^q_Y [-q] \cong 
    \F^{r}_{X \times Y} [-r] .
    \end{align}
  By the theory of constructible sheaves ([Ach21, Proposition 2.9.1],  which treats the case of algebraic varieties, and whose proof applies equally well in our setting), 
  this induces the K\"{u}nneth formula 
  (\cite[Theorem B]{GrossShokirehAsheaf-theoreticapproachtotropicalhomology23}, cf. \cite{SmackaPhD2017})
  \begin{align} \label{eq. Kunneth formula}
     \bigoplus_{ p + q = r } 
    \bigoplus_{s+ t = u} 
      H^{p,s}_{\Trop} (X) \otimes H^{q,t}_{\Trop} (Y) 
       \cong H^{r,u}_{\Trop} (X \times Y) .
  \end{align}

  \begin{exam}
   Let $\R\cup \{\infty\} = \Trop (\A^1)$ be a tropical toric variety. 
   Then we have an exact sequence 
    $$ 0 = H^{1,0}_{\Trop} (\R \cup \{\infty\}) 
         \to  H^{1,0}_{\Trop} (\R ) 
         \to  H^{1,1}_{\Trop, \{ \infty \} } (\R \cup \{\infty\}) 
         \to  H^{1,1}_{\Trop} (\R \cup \{\infty\}) =0.$$
    Hence the connecting homomorphism 
         $$ \delta_{\R} \colon H^{1,0}_{\Trop} (\R ) 
         \cong  H^{1,1}_{\Trop, \{ \infty \} } (\R \cup \{\infty\}) $$
      is an isomorphism.
    We also have a natural isomorphism  
     $\Q \cdot x \cong  H^{1,0}_{\Trop} (\R ) $, 
     where $ x $ is the coordinate function of $\R \cup \{ \infty \}$, considered as an element of $\F^1 (\R) = F^1 (\R)$.
    Consequently, we have an isomorphism 
     \begin{align} \label{eq. signs H^1,1 R }
        H^{1,1}_{\Trop, \{ \infty \} } (\R \cup \{\infty\}) 
       \cong \Q
     \end{align}
      sending $\delta_{\R} (x)$ to $1$.
  \end{exam}

  \begin{exam}\label{exam: connecting hom general}
   Let $W  \subset N_\R$ be a tropical fan. 
   Since $\F^{p}_{W}$ is a constructible sheaf, 
    we have 
          $$H^{p, q}_{\Trop} ( W) 
          \cong 
         \begin{cases}
            \F^{p}_{W,  0_N } 
             \cong 
                   F^{p}_W ( \{ 0_N \} )  &  (q=0) \\ 
                      0 & (q \neq 0) 
          \end{cases} $$
      (\cite[Proposition 3.1]{JellShawSmackaSuperformstropicalcohomologyandPoincarduality2019}), 
      where $0_N \in N$ is the zero element.
    Then we have a commutative diagram 
    $$\xymatrix{
      H^{p+1,0}_{\Trop} (W  \times \R) \ar[r]^-{\delta_{W \times \R }} \ar[d]^-{\cong} & 
      H^{p+1,1}_{\Trop, W \times \{ \infty \}} 
      (W  \times (\R \cup \{ \infty \}) ) \ar[d]^-{\cong} &
       \\ 
      H^{p,0}_{\Trop} (W  ) 
      \times  H^{1,0}_{\Trop} ( \R) \ar[r] &
      H^{p,0}_{\Trop} (W ) 
      \times 
      H^{1,1}_{\Trop, \{ \infty \}} ( \R \cup \{ \infty \})
       \ar[r]^-{\cong} &
      H^{p,0}_{\Trop} (W  ), 
    }$$
    where  $\delta_{W \times \R} $ is the connecting homomorphism, 
    and the last horizontal arrow is given by the isomorphism (\ref{eq. signs H^1,1 R }).

    The composition 
    $$ F^{p+1}_{  W \times \R } ( \{ 0_{N \times \R} \} ) \cong 
      H^{p+1,0}_{\Trop} (W  \times \R) 
      \to  H^{p,0}_{\Trop} (W  ) \cong 
     F^{p}_W ( \{ 0_{N} \}  ) $$
     is given by 
  \begin{align*}
      m_1 \wedge \dots \wedge m_{p}
       \wedge x
       \mapsto & 
      m_1 \wedge \dots \wedge m_{p}\\
      m_1 \wedge \dots \wedge m_{p+1} \mapsto & 0
  \end{align*}
   ($m_i \in M $), where  
      $0_{N \times \R} \in N \times \R$ is the zero element, 
      and 
     $ x $ is the coordinate function of $\R \cup \{ \infty \}$, considered as an element of $\F^1 (\R) = F^1 (\R)$.
     This is because the first vertical arrow 
     $$ H^{p+1,0}_{\Trop} (W  \times \R) \to
      H^{p,0}_{\Trop} (W  ) 
      \times  H^{1,0}_{\Trop} ( \R), $$
     is given by multiplying $(-1)^p$ on $F^*$ by (\ref{eq. Kunneth F level}), 
     and a horizontal arrow 
     $$ H^{p,0}_{\Trop} (W  ) 
      \times  H^{1,0}_{\Trop} ( \R) 
      \to 
      H^{p,0}_{\Trop} (W ) 
      \times 
      H^{1,1}_{\Trop, \{ \infty \}} ( \R \cup \{ \infty \})$$
    is given by $(-1)^p \Id \times \delta_{\R}$ 
    by our sign convention (\ref{eq sign of tensor product}). 
    (Note that the connecting homomorphism $\delta_\R$ is given by the differential of a resolution of $\F^1_{\R \cup \{\infty\}}$.)
  \end{exam}

 \subsection{Amini-Piquerez's resolutions and Gysin maps}\label{sub:resolutions_and_gysin_maps}
  In this subsection,  we shall recall explicit complexes of tropical cohomology given by Amini-Piquerez in \cite{AminiPiquerezHomologyoftropicalfans2021}, \cite{AminiPiquerezTropicalFeichtner-Yuzvinskyandpositivitycriterionforfans2024}.
  It is similar to complexes for singular cohomology of linear varieties in \cite{TotaroChowgroupsChowcohomologyandlinearvarieties2014}  and similar to Gersten resolutions (see \cite{MikamiOntropicalcycleclassmaps2020}). 
    We also recall Gysin maps given in \cite[Remark 3.15]{AminiPiquerezHodgetheoryfortropicalvarieties2020}.

  Let $T_{\Sigma}$ be the smooth toric variety of dimension $n$ 
    corresponding to a fan $\Sigma$ in $N_\R$.
  Let $ (X, w )$ be a tropical fan of dimension $d$ in $\Trop (T_{\Sigma})$
    regular at infinity.
 
    We put 
    $$X^p:= \bigcup_{\sigma \in \Sigma_p} X \cap \overline{N_{\sigma,\R}} . $$
    Let $r\geq 0$.
   We have distinguished triangles 
   \begin{align}
     Rj_{l+1*}j_{l+1}^! \F^r_X \to Rj_{l*}j_l^! \F^r_X \to Ri_{l+1,*} i_{l+1}^* Rj_{l*}j_l^! \F^r_X \to^{[1]}   
     \label{eq. distinguished triangles inducing spectral sequence}
    \end{align}
   on $X$ ($l \geq 0$), 
   where $j_l \colon X^l \to X$ and $i_l \colon X \setminus X^l \to X$ 
    are inclusions.
  This  induces exact couples, and hence a spectral sequence 
    $$ E_1^{p,q} =  \bigoplus_{ \sigma \in \Sigma_p } 
         H^{r, p+q-r}_{\Trop, X \cap N_{\sigma,\R} } 
          (X \setminus X^{p+1})   
         \Rightarrow H^{r,p+q-r}_{\Trop} (X) .$$

     By the K\"{u}nneth formula (\ref{eq. Kunneth formula}) 
     and the isomorphism (\ref{eq. signs H^1,1 R }), we have 
    \begin{align*}
         H^{r, p+q-r}_{\Trop, X \cap N_{\sigma,\R}  } 
          (X \setminus X^{p+1})   
          \cong  &
          H^{r-p, q-r}_{\Trop} ( X \cap N_{\sigma,\R}) 
          \otimes 
          H^{p, p}_{\Trop, \{ \infty \}^p } 
            (  ( \R \cup \{ \infty \} )^p ) \\
          \cong &
          H^{r-p, q-r}_{\Trop} ( X \cap N_{\sigma,\R}) 
            .
    \end{align*}
  Since $X \cap N_{\sigma,\R}$ is a fan in $N_{\sigma,\R}$ 
    and 
    $\F^{r-p}_X |_{X \cap N_{\sigma,\R}} 
       = \F^{r-p}_{X \cap N_{\sigma,\R}}$ is a constructible sheaf, 
    we have 
          $$H^{r-p, q-r}_{\Trop} ( X \cap N_{\sigma,\R}) 
          \cong 
         \begin{cases}
            \F^{r-p}_{X \cap N_{\sigma,\R} , 0_{\sigma} } 
             \cong 
                   F^{r-p}_{ X \cap N_{\sigma,\R} } 
                    ( \{ 0_\sigma \})  &  (q=r) \\ 
                      0 & (q \neq r) 
          \end{cases} $$
      (\cite[Proposition 3.1]{JellShawSmackaSuperformstropicalcohomologyandPoincarduality2019}),
     where $0_\sigma \in N_{\sigma,\R}$ is the zero element. 
      In particular, 
    we have $E_1^{p,q} =0 $ for $q \neq r$.

  Thus we get Amini-Piquerez's complex 
   \begin{align} \label{eq AP's complex}
    C^{r,*-r}_{\AP, X } 
     :=  \bigoplus_{ \sigma \in \Sigma_{*-r} }
     F^{r-(*-r)} (0_\sigma, X \cap N_{\sigma,\R}) 
     \end{align}
     whose 
    $(r+p)$-th cohomology is isomorphic to 
         $ H^{r,p}_{\Trop} (X) $, 
         and  
  differentials are 
  given by 
  \begin{align*}
    F^{r-q} (0_\tau, X \cap N_{\tau,\R}) 
      \to & F^{r-q-1} (0_\sigma, X \cap N_{\sigma,\R}) \\
      m_1 \wedge \dots \wedge m_{r-q-1}
       \wedge x
       \mapsto & 
      m_1 \wedge \dots \wedge m_{r-q-1}\\
      m_1 \wedge \dots \wedge m_{r-q} \mapsto & 0
  \end{align*}
   ($\tau \in \Sigma_q$, $\sigma \in \Sigma_{q+1}$ with $\tau \subset \sigma$, 
    $m_i \in M \cap \sigma^{\perp}$)
         by \Cref{exam: connecting hom general} and our sign convention (\ref{eq sign of tensor product}), 
    where
    $x \in M \cap \tau^{\perp}$ is an element such that $v_l(x)=1$ 
    for  the primitive vector $v_l \in N$ of  the $1$-dimensional cone  $l$
      such that $\sigma = l + \tau$. 

  For $\sigma \in \Sigma_s$, 
   we have a natural morphism 
    $$  C^{r,*-r}_{\AP, X \cap \overline{N_{\sigma,\R}}} 
        \to  
        C^{r+s,*-r-s}_{\AP, X } [2s]  
        $$
    of complexes
    given by the identity map of 
    each $F^i (0_\tau, X \cap N_{\tau,\R})$ 
    ($\tau \in \Sigma$ containing $\sigma$).
  This morphism induces a morphism 
    $$  H^{r, p }_{\Trop} ( X \cap \overline{N_{\sigma,\R}} ) 
    \to H^{r+s, p+s}_{\Trop} 
    ( X ), $$
    the \emph{Gysin map} 
    (\cite[Remark 3.15]{AminiPiquerezHodgetheoryfortropicalvarieties2020}).
  This is compatible with Gysin maps of Chow groups of toric varieties, see [loc.cit.].

\section{Log tropical cohomology and spectral sequences}
  \label{sec:log_tropical_cohomology_and_spectral_sequences}
  In this section, we shall prove the main theorem 
  (Corollary \ref{spectral sequence for ss reduction}), 
  a new construction of 
  the spectral sequences given by Itenberg-Katzarkov-Mikhalkin-Zharkov \cite{ItenbergKatzarkovMikhalkinZharkovTropicalhomology2019} 
  (in the realizable and smooth case)
  and Amini-Piquerez \cite{AminiPiquerezHodgetheoryfortropicalvarieties2020}  
  (in the smooth case)
  without the smoothness assumption.
  Similar spectral sequences are given in  \cite{MikamiOntropicalcycleclassmaps2020} 
   for tropical cohomology of geometric semi-stable reductions of algebraic varieties. 

   Our construction is 
     based on relative log tropical homolomorphic forms,
     and is 
   the same as that of Steenbrink's geometric monodromy-weight spectral sequences 
   for 
  limit mixed Hodge structures \cite[Corollary 4.20]{SteenbrinkLimitsofHodgestructures1976}, \cite[Chapter 11]{PetersSteenbrinkMixedHodgestructures2008}.

  In Subsection \ref{subsec log tropical cohomology}, we shall introduce log tropical holomorphic forms for tropical fans, 
  and use them to give analogs of weight spectral sequences given by Deligne 
  (\cite{DeligneThoriedeHodgeII1972}, 
  \cite[Chapter 4]{PetersSteenbrinkMixedHodgestructures2008}). 
  This is a generalization of Amini-Piquerez's resolution 
  (\cite{AminiPiquerezHomologyoftropicalfans2021},  \cite{AminiPiquerezHomologicalsmoothnessandDeligneresolutionfortropicalfans2024})
  for smooth tropical fans. 
  In Subsection \ref{sub:semi_stable_reductions_of_tropical_varieties}, 
   we introduce semi-stable reductions of tropical varieties. 
   In Subsection \ref{sub:monodromy_weight_spectral_sequences}, we shall prove the main theorem.
  In Subsection \ref{sub:eigenwaves}, 
   we also show that the eigenwave action is a tropical analog of the residue of Gauss-Manin connection (\Cref{eigenwave is residue of GM connections:label}) for smooth projective tropical varieties.

 \subsection{Log tropical cohomology}\label{subsec log tropical cohomology}
  In this subsection, we study log tropical holomorphic forms and analogs of Deligne's weight spectral sequences
  (\cite{DeligneThoriedeHodgeII1972}, 
  \cite[Chapter 4]{PetersSteenbrinkMixedHodgestructures2008})
   for tropical fans, 
  generalizing Amini-Piquerez's one 
  (\cite{AminiPiquerezHomologyoftropicalfans2021},  \cite{AminiPiquerezHomologicalsmoothnessandDeligneresolutionfortropicalfans2024})
  for smooth tropical fans.

  Let $\Sigma$ be a unimodular fan in $N_\R$.
  Let $X \subset \Trop (T_\Sigma)$ be a tropical fan regular at infinity.
  Let $I \subset \Sigma$ be a subset of $1$-dimensional cones.
  We put $D:= X\cap \bigcup_{l\in I} \overline{N_{l,\R}}$, a closed subset of $X$,
  and $U:= X \setminus D$ the complement.
  We put $D_l := X \cap \overline{N_{l,\R}}$ ($l \in I$).

   We put $\F^r_X (\log D) :=i_* \F^r_U$, where $i \colon U \hookrightarrow X$ is the inclusion.
   The cohomology $ H^{r+q} (X, \F^r_X (\log D) [-r])   $ 
   is called 
   \emph{log tropical cohomology} of $(X,D)$.
  Since $X$ is regular at infinity, 
  by the K\"{u}nneth formula (\ref{eq. Kunneth formula}), 
  we have  
  $R^i i_* \F^r_U  =0$
   ($i \geq 1$).
  Hence  we have 
  $$ H^{r+q} (X, \F^r_X (\log D)[-r]) \cong H^{r,q}_{\Trop} (U).$$

  Let $x \in X$ be a point. We put
   $I_x := \{ l \in I  \mid x \in D_l \}$.
  We put $\sigma_x \in \Sigma$ the cone such that $x \in N_{\sigma_x,\R}$.
  Then we can identify 
  $$\Trop (T_{\sigma_x}) 
     \cong (\R \cup \{ \infty \})^{\dim \sigma_x  }
      \times \R^{n -\dim \sigma_x }$$
  with 
  $I_x = \{  \R_{\geq 0} \cdot e_i \}_{1\leq i \leq \# I_x }$, 
  where 
   $T_{\sigma_x} \subset T_{\Sigma}$ is the affine open toric subvariety corresponding to the cone $\sigma_x \in \Sigma$, 
   and 
  $e_i $ ($1\leq i \leq n$) is the $i$-th coordinate 
   of $\R^n \cong N_\R$.
  We put $f_i \in M$ ($1\leq i \leq n$) the dual basis of $e_i$.
  For $l =\R_{\geq 0} \cdot e_i\in I_x$,
  we put $f_l:= f_i$.
  Then we have an isomorphism 
  \begin{equation}
  \begin{aligned}
  \bigoplus_{ i=0}^r 
  \F^{r-i}_{X \cap N_{\sigma_x,\R},x}
   \otimes  
  \bigwedge^i \Q \langle f_l \rangle_{l \in I_x}  
  & 
   \cong (\F^r_X (\log D))_x  
   \\
  f  \otimes  f_{l_1}\wedge \dots \wedge f_{l_{i}} 
  & \mapsto
    f \wedge f_{l_1}\wedge \dots \wedge f_{l_{i}} 
   \label{log Fp local description} .
  \end{aligned}
  \end{equation}
  ($l_j \in I_x $ , $f \in \F^{r-i}_{X \cap N_{\sigma_x,\R},x}$).

  Let $J:= \{j_1,\dots,j_{\# J}\} \subset I$ be a subset such that $D_J := \bigcap_{j\in J} D_j \neq \emptyset$.
  We put $D_J \cap D := D_J \cap \big(\bigcup_{l \in I \setminus J} D_l \big)$ a closed subset of $D_J$.
  We put $j_J \colon D_J \hookrightarrow X$ the inclusion.
  For $ j \in I$, we have a natural morphism
  $$\F^r_X (\log D) \to j_{\{j\}*} \F^{r-1}_{D_{j}} (\log (D_{j} \cap  D ) )   $$
  which is given, at $x \in D_{j}$, by 
  \begin{align*}
  \F^r_X (\log D)_x 
  \cong \bigoplus_{ i=0}^r 
  \F^{r-i}_{X \cap N_{\sigma_x,\R},x}  
   \otimes  
  \bigwedge^i \Q \langle f_l \rangle_{l \in I_x}  
  & 
  \to 
  \bigoplus_{ i=0}^{r-1} 
  \F^{r-i-1}_{X \cap N_{\sigma_x,\R},x}  
    \otimes  
  \bigwedge^i \Q \langle f_l \rangle_{l \in I_x \setminus \{ l_0 \}}  
  \\
  & \cong \F^{r-1}_{D_{l_0}} (\log (D_{l_0} \cap  D ) )_x  
  \\
  f \wedge f_{l_1}\wedge \dots \wedge f_{l_{i-1}} \wedge f_{j}  
  & \mapsto
   f \wedge f_{l_1}\wedge \dots \wedge f_{l_{i-1}} 
    \\
   f \wedge f_{l_1}\wedge \dots \wedge f_{l_i}   & \mapsto 0
  \end{align*}
  ($l_j \in I_x \setminus \{j \}$ , $f \in \F^{r-i}_{X \cap N_{\sigma_x,\R},x}$).
  We fix a total order $o(J)= (j_1 < \dots < j_{\# J})$ 
   for each $J \subset I$.
  We get a composition of morphisms 
  \begin{align*}
  \Res_{o(J)} \colon 
    \F^r_X (\log D)&  \to j_{\{j_1\}*} \F^{r-1}_{D_{j_1}} (\log (D_{j_1} \cap D ) )  
    \\
  & \to 
  j_{\{j_1,j_2\}*} \F^{r-1}_{D_{\{j_1,j_2\}}} (\log (D_{\{j_1,j_2\}} \cap D ) )
  \\ 
  & \to \dots \to 
  j_{J*} \F^{r-\# J}_{D_J} (\log (D_J \cap D ) ).
  \end{align*}
  We put 
  \begin{align*}
   W_s \F^r_X (\log D) 
  :=  \Ker \bigg( 
  (\Res_{o(J)})_J
   \colon 
  \F^r_X (\log D)  
  \to 
  \bigoplus_{\substack{J \subset I \\ \# J= s+1}}
  j_{J,*} \F^{r-s-1}_{D_J} (\log (D_J \cap D ) )
  \bigg),
  \end{align*}
  which does not depend on orders.
  This is an increasing filtration, called the \emph{weight filtration}.
  For example, 
  $W_0 \F^r_X (\log D) = 
   \F^r_X  $.
  We put 
  $$ 
   \Gr^W_s \F^r_X (\log D) 
  := W_s \F^r_X (\log D)  / W_{s-1} \F^r_X (\log D) .$$
    For $s \geq 0$, 
    we have an isomorphism
   $$
  (\Res_{o(J)})_J [-r]
   \colon 
   \Gr^W_s \F^r_X (\log D) [-r]
  \cong \bigoplus_{\substack{J \subset I \\ \# J= s}}
  j_{J ,* } \F_{D_J  }^{r-s} [-r].$$
   Thus we get the following.

  \begin{prp}\label{weight spectral sequence for smooth open varieties}
    For $r \geq 0$, 
    the $E_1$-terms of the spectral sequence 
    $$E_1^{p,q}= H^{p+q} (X, \Gr^W_{-p} \F_{X}^r (\log D)[-r ])  \Rightarrow 
   H^{p+q} (X, \F^r_X (\log D)[-r])$$
      induced 
      by the weight filtration are isomorphic to
    $$ \bigoplus_{\substack{J \subset I \\ \# J= -p}}
        H^{p+q} (D_J, \F_{D_J  }^{r+p} [-r])
        \cong 
        \bigoplus_{\substack{J \subset I \\ \# J= -p}}
          H^{r+p, p+q-r }_{\Trop} ( D_J ) .$$
  \end{prp}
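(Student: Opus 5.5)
The spectral sequence in the statement is the standard one attached to the finite increasing filtration $W_\bullet$ of $\F^r_X(\log D)[-r]$, reindexed in the usual way via $W^{-p} := W_{p}$. So there is essentially nothing new to construct. The work is to check that the filtration is finite and exhaustive, and to identify the graded pieces with the displayed $E_1$-terms using the residue isomorphism stated just before the proposition.

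\textbf{Finiteness and exhaustiveness.} At each $x\in X$ the local description (\ref{log Fp local description}) shows that $\F^r_X(\log D)_x$ has components indexed by $i\le \min\{r,\#I_x\}$. Also, $\Res_{o(J)}$ vanishes identically once $\#J>\#I_x$ or $\#J>r$, since then $D_J\not\ni x$ or the target sheaf is zero. Hence $W_s\F^r_X(\log D)=\F^r_X(\log D)$ for $s\ge r$, and $W_{-1}=0$. The filtration is therefore finite, and the spectral sequence
\[
E_1^{p,q}=H^{p+q}(X,\Gr^W_{-p}\F^r_X(\log D)[-r])\Rightarrow H^{p+q}(X,\F^r_X(\log D)[-r])
\]
exists and converges. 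The abutment is $H^{r,q'}_{\Trop}(U)$ by the vanishing $R^ii_*\F^r_U=0$ for $i\ge1$ noted earlier.

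\textbf{Identifying the graded pieces.} Next we need the residue isomorphism $\Gr^W_s\F^r_X(\log D)[-r]\cong\bigoplus_{\#J=s}j_{J,*}\F^{r-s}_{D_J}[-r]$. This is the one step I would verify rather than quote. It can be checked on stalks at $x$ using (\ref{log Fp local description}).

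On $\bigwedge^i\Q\langle f_l\rangle_{l\in I_x}$, the iterated residue $\Res_{o(J)}$ with $\#J=i$ kills every basis monomial except $f_J=f_{j_1}\wedge\dots\wedge f_{j_i}$. It sends $f\wedge f_J$ to $\pm f$. Consequently $W_s$ at $x$ is exactly $\bigoplus_{i\le s}\F^{r-i}_{X\cap N_{\sigma_x,\R},x}\otimes\bigwedge^i$.

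It remains to see that the target stalk is the correct sheaf. For $x\in D_J$ we need $\F^{r-s}_{X\cap N_{\sigma_x,\R},x}\cong(\F^{r-s}_{D_J})_x$ after restricting to the stratum $D_J$ and forgetting the log part along the remaining divisors. This holds because $X$ is regular at infinity: locally $X$ is the product $(\R\cup\{\infty\})^{\dim\sigma_x}\times(X\cap N_{\sigma_x,\R})$.

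These stalk identifications must also be compatible with the restriction maps $i_{P_2\subset P_1}$. This compatibility is the main (though routine) obstacle. One has to check that the choices of $f_l$ coming from different points of the same stratum agree, up to elements vanishing on the relevant $F_p$, so that the residue is a genuine morphism of sheaves. Since $f_l$ is dual to the primitive generator $e_l$ and $\Sigma$ is unimodular, the ambiguity lies in $\sigma^\perp$-directions, and these are killed by the residue.

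\textbf{Conclusion.} Setting $s=-p$, the $E_1$-terms become
\[
\bigoplus_{\#J=-p}H^{p+q}(X,j_{J,*}\F^{r+p}_{D_J}[-r])=\bigoplus_{\#J=-p}H^{p+q}(D_J,\F^{r+p}_{D_J}[-r]).
\]
This uses the exactness of $j_{J,*}$, which holds because $j_J$ is a closed embedding. By the definition of tropical cohomology, the right-hand side equals $\bigoplus_{\#J=-p}H^{r+p,\,p+q-r}_{\Trop}(D_J)$.
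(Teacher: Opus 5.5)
Your proposal is correct and follows the paper's route: the spectral sequence of the weight-filtered complex, the residue isomorphism $\Gr^W_s\F^r_X(\log D)\cong\bigoplus_{\#J=s}j_{J,*}\F^{r-s}_{D_J}$, and exactness of $j_{J,*}$ for the closed embeddings $j_J$. The paper simply asserts the residue isomorphism, whereas you verify it stalkwise via (\ref{log Fp local description}). To pin down $W_s$ itself, use that $\Res_{o(J)}(f\wedge f_K)=\pm f\wedge f_{K\setminus J}$ for $J\subset K$ (and $0$ otherwise); this shows the residues with $\#J=s+1$ are jointly injective on all pieces $i\geq s+1$, not only on the piece $i=\#J$.
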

  
  In the rest of this subsection,
   we shall show that 
   the differential 
  $d_1 \colon E_1^{p,q} \to E_1^{p+1,q}$
   in Proposition \ref{weight spectral sequence for smooth open varieties} 
   is given by Gysin maps (Subsection \ref{sub:resolutions_and_gysin_maps}) up to signs (\Cref{sign of log spectral sequence:label}) 
   (cf. \cite[Proposition 1.8]{GuillenNavarroSurlethormelocaldescyclesinvariantsAznar1990},   \cite[Proposition 4.7 and 4.10]{PetersSteenbrinkMixedHodgestructures2008}).
   The differential 
    $d_1 \colon E_1^{p,q} \to E_1^{p+1,q}$
   is  the connecting homomorphism of 
    a distinguished triangle
   \begin{align*}
           \Gr^W_{-p-1} \F_{X}^r (\log D) [-r]
            \to  &
          (W_{-p} \F_{X}^r (\log D)/ W_{-p-2} \F_{X}^r (\log D))
          [-r]  \\
         \to  &
          \Gr^W_{-p} \F_{X}^r (\log D) [-r] \to^{[1]}.
   \end{align*}

   To simplify notation, we put 
     $$ W_{-p} \F_{X}^r (\log D)/ W_{-p-2}
      :=
      W_{-p} \F_{X}^r (\log D)/ W_{-p-2} \F_{X}^r (\log D). 
      $$
  For $J' \subset J  \subset I$ 
     with $\# J'  =-p-1 $ and $\# J=-p$
     and orders $o(J') = ( j_1'< \dots < j_{-p-1  }' )$ 
         and $o(J)= (j_1 < \dots < j_{-p})$, 
  we define
    $\sgn (o(J'),o(J)) \in \{ \pm 1 \}$
    as 
   the sign of the permutation 
   that sends
    $ ( j_1', \dots , j_{-p-1}' , j) $ 
    to $ (j_1 , \dots , j_{-p})$,
    where $j \in J$ is such that $J= \{ j \} \cup J'$.

  We shall consider another topology of $X$
   whose 
   open subsets are unions of $X \cap \Trop (T_\sigma)$ ($ \sigma \in \Sigma$), 
   and coverings are set-theoretical coverings.
   Let $X_{\Zar}$ denote $X$ equipped with this topology.
  The identity map of $X$ induces a continuous map
   $\pi \colon X \to X_{\Zar}$, 
   where let $X$ denote the set $X$ equipped with the usual topology. 
   By using 
   Amini-Piquerez's complex
    $C^{r,*-r}_{\AP, X \cap T_{\sigma} } $ ($ \sigma \in \Sigma$) 
    (Subsection \ref{sub:resolutions_and_gysin_maps}), 
    we can easily see that 
   $R\pi_*\F^r \cong \pi_* \F^r$.
   Amini-Piquerez's complex
    $C^{r,*-r}_{\AP, X } $ gives  
   a flabby resolution 
     $\C^{r,*-r}_{\AP, X } $ of $\pi_*\F^r[-r]$ 
     by considering 
     $  F^{r-(*-r)}_{X \cap N_{\sigma, \R}} ( \{ 0_\sigma \} ) $
     as 
     a skyscraper sheaf at $0_{\sigma}$.

  We shall use Amini-Piquerez's complexes to compute the connecting homomorphism.
  We put 
   $$\C_{\AP, \Gr_j^W}^{r,*-r}
   := 
   \bigoplus_{\substack{J \subset I \\ \# J= j}}
   \C_{\AP, D_J }^{r-j, *-r+j } [-j] , 
   $$
   a flabby resolution of 
   $ \pi_* \Gr^W_j \F^r_X (\log D)[-r] $. 
  We also put 
   $\C_{\AP, W_{-p} / W_{-p-2}}^{r,*-r} $
   the flabby complex whose $q$-th term ($q \geq r$)
   is 
   $$\C_{\AP, \Gr_{-p}^W}^{r,q-r} \oplus 
    \C_{\AP, \Gr_{-p-1}^W}^{r,q-r}$$
   and, 
   differentials 
   are given by the sum of the differentials of 
    $\C_{\AP, \Gr_{-p}^W}^{r,*-r}  $
    and $  \C_{\AP, \Gr_{-p-1}^W}^{r,*-r} $ 
   and the identity map 
    $ \times  (-1)^{p} \cdot \sgn (o(J'),o(J))$
   of $F^{2r+p-q}_{ X \cap N_{\sigma_K, \R} } ( \{ 0_{\sigma_K} \} )$ 
    for $J' \subset J \subset K \subset I$ 
     with $\# J'  =-p-1 $, $\# J=-p$, and $\# K = -p+q-r $,
     where $\sigma_K := \sum_{l \in K} l   \in \Sigma$ is the cone corresponding to $K \subset I$.

     Since 
   $ R^i \pi_* \Gr^W_j \F^r_X (\log D) = 0 $ ($i \geq 1$), 
   we have 
    $ R^i \pi_* W_{-p} \F_{X}^r (\log D)/ W_{-p-2} =0 $ ($i \geq 1$).
   We have a quasi-isomorphism 
    $$
    \pi_* W_{-p} \F_{X}^r (\log D)/ W_{-p-2}  [-r]
    \cong 
      \C_{\AP, W_{ -p } / W_{-p-2}}^{r,*-r} $$
   given as the sum of a natural morphism 
    $$ 
    \pi_* W_{-p} \F_{X}^r (\log D)/ W_{-p-2} [-r] 
    \to 
     \pi_* \Gr^W_{-p} \F_{X}^r (\log D) [-r] 
    \cong 
      \C_{\AP, \Gr_{-p}^W}^{r,*-r} $$
      of complexes
   and a morphism
   \begin{align*}
      (\Res_{o(J)})_{\substack{J \subset I \\ \# J= -p-1}}
      \colon 
      \pi_* W_{-p} \F_{X}^r (\log D)/ W_{-p-2}    
    \to  \C_{\AP, \Gr_{-p-1}^W}^{r,r-r} 
   \end{align*}
   of sheaves.
    (Note that $ 
      (\Res_{o(J)})_{\substack{J \subset I \\ \# J= -p-1}}
    $ does not extend to a morphism of complexes.)

    Hence we have a commutative diagram
     $$\xymatrix{
        R \pi_*  \Gr^W_{-p-1} \F_{X}^r (\log D) 
          \ar[r] \ar[d]^-{\cong}  &
        R \pi_* W_{-p} \F_{X}^r (\log D)/ W_{-p-2}  
         \ar[r] \ar[d]^-{\cong} &
        R \pi_* \Gr^W_{-p} \F_{X}^r (\log D)   
          \ar[d]^-{\cong} 
          \\
        \C_{\AP, \Gr_{-p-1}^W}^{r,*-r} [r]
        \ar[r] &
       \C_{\AP, W_{-p} / W_{-p-2}}^{r,*-r} [r]
       \ar[r] &
      \C_{\AP, \Gr_{-p}^W}^{r,*-r} [r]
      .
     }$$
     The second row extends to a short exact sequence, 
     and hence we can compute connecting homomorphisms of the shift of the first row using differentials of 
     $ \C_{\AP, W_{-p} / W_{-p-2}}^{r,*-r} $.

     Consequently, we have the following (cf. 
      \cite[Proposition 1.8]{GuillenNavarroSurlethormelocaldescyclesinvariantsAznar1990},  
   \cite[Proposition 4.7 and 4.10]{PetersSteenbrinkMixedHodgestructures2008}).
   Recall that 
   Gysin maps 
   (Subsection \ref{sub:resolutions_and_gysin_maps})
   are given by identity maps of direct summands of Amini-Piquerez's complexes.

  \begin{cor}\label{sign of log spectral sequence:label}
   The differential 
   $d_1 \colon E_1^{p,q} \to E_1^{p+1,q}$
   in Proposition \ref{weight spectral sequence for smooth open varieties}
   is given by   $  (-1)^{p} \cdot  \sgn (o(J'), o(J)) \times $ Gysin maps
    $$  H^{r+p, p+q -r}_{\Trop} ( D_J ) 
          \to 
          H^{r+p+1, p+1+q -r }_{\Trop} ( D_{J'} ) $$
    ($J' \subset J \subset I $ 
       with $ \# J'= -p-1$ and $ \# J= -p$). 
  \end{cor}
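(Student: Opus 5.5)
The plan is to compute $d_1$ as the connecting homomorphism of the short exact sequence of flabby complexes
$$0 \to \C_{\AP, \Gr_{-p-1}^W}^{r,*-r} \to \C_{\AP, W_{-p}/W_{-p-2}}^{r,*-r} \to \C_{\AP, \Gr_{-p}^W}^{r,*-r} \to 0,$$
which is legitimate by the commutative diagram above. Its vertical maps are quasi-isomorphisms because $R^i\pi_*$ of all three sheaves vanishes for $i\ge 1$. Hence $R\Gamma(X,-)$ of the first row equals $\Gamma(X_{\Zar},-)$ of the second row. Since the second row is termwise split (as graded objects it is the direct sum of the two outer complexes), the connecting homomorphism is the standard one: lift a cocycle from the right-hand complex, apply the differential of the middle complex, and read off the component landing in the left-hand complex.

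The key steps are as follows.

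First, I would verify that $\C_{\AP, W_{-p}/W_{-p-2}}^{r,*-r}$ is a complex, i.e.\ $d^2=0$. For this the off-diagonal term $(-1)^p\sgn(o(J'),o(J))\cdot\mathrm{id}$ must anticommute correctly with the Amini--Piquerez differentials of the two graded pieces. Both differentials are built from the same contraction-by-$x$ maps on the common summands $F^{2r+p-q}_{X\cap N_{\sigma_K,\R}}(\{0_{\sigma_K}\})$.

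Second, I would check that the displayed map (projection to $\Gr_{-p}$ plus $(\Res_{o(J)})_{\#J=-p-1}$ in degree $r$) is a quasi-isomorphism onto this complex. This requires that it be compatible with differentials in the lowest degree. That compatibility is exactly where the off-diagonal sign is forced. Locally at $0_{\sigma_K}$, via the description (\ref{log Fp local description}), the residue along $J'$ of a section of $W_{-p}$ and the residue along $J=J'\cup\{j\}$ differ by one further contraction with $f_j$. Reordering $(j_1',\dots,j_{-p-1}',j)$ into $o(J)$ produces $\sgn(o(J'),o(J))$, and moving the contraction past the $[-r]$ and $[-j]$ shifts under the sign convention (\ref{eq sign of tensor product}) produces $(-1)^p$. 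Once that holds, quasi-isomorphism follows by the five lemma applied to the graded pieces.

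Third, I would compute the connecting map itself. A class in $E_1^{p,q}=\bigoplus_{\#J=-p}H^{r+p,p+q-r}_{\Trop}(D_J)$ is represented by a cocycle in $\C_{\AP,D_J}$ whose summands sit at cones $\sigma_K$ with $K\supset J$. Lift it by zero in the $\Gr_{-p-1}$ slot. Applying the middle differential, the only component in $\C_{\AP,\Gr^W_{-p-1}}$ is the off-diagonal term. That term is the identity on each summand $F^{\bullet}(\{0_{\sigma_K}\})$ placed into $\C_{\AP,D_{J'}}$ at the same cone $\sigma_K$, multiplied by $(-1)^p\sgn(o(J'),o(J))$. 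By the description of Gysin maps in Subsection \ref{sub:resolutions_and_gysin_maps} (identity maps on summands indexed by cones containing $\sigma_j$, with shift $[2]$), this is exactly $(-1)^p\sgn(o(J'),o(J))$ times the Gysin map $H^{r+p,p+q-r}_{\Trop}(D_J)\to H^{r+p+1,p+q-r+1}_{\Trop}(D_{J'})$. To confirm this identification I would apply the construction of Subsection \ref{sub:resolutions_and_gysin_maps} to $D_{J'}$ in place of $X$, with $D_J=D_{J'}\cap D_j$, and check that the degree shifts match.

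I expect the main obstacle to be the sign bookkeeping in the second step: confirming that the specific sign $(-1)^p\sgn(o(J'),o(J))$ is the one making the residue map a chain map. This needs a careful local computation with wedge orderings and with the shift conventions from the Appendix. I would test it first in the case $\#J=1$, $J'=\emptyset$, where it reduces to \Cref{exam: connecting hom general}, and then do induction on $\#J$.
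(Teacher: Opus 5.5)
Your proposal is correct and is essentially the paper's argument: you compute $d_1$ as the connecting map of the short exact sequence of Amini--Piquerez flabby complexes on $X_{\Zar}$, and the off-diagonal term $(-1)^p\sgn(o(J'),o(J))\cdot\mathrm{id}$ is read off directly as a signed Gysin map. Your Steps 1 and 2 (checking $d^2=0$, and checking that projection plus $\Res_{o(J')}$ is a chain map) make explicit what the paper leaves implicit, and the sign does work out: the factor $(-1)^p$ comes from the shift $[p+1]$ on the $\Gr^W_{-p-1}$ piece combined with the chain-map condition, and the shift $[-r]$ contributes nothing.
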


 \subsection{Semi-stable reductions of tropical varieties}
  \label{sub:semi_stable_reductions_of_tropical_varieties}
  Our construction of the spectral sequences in the main theorem (\Cref{introduction spectral sequence for ss reduction})
  is based on \emph{strictly semi-stable reductions} of tropical varieties, introduced in this subsection. 
   They are defined by using cones over tropical varieties, 
   which are used 
   in the original work of  Itenberg-Katzarkov-Mikhalkin-Zharkov \cite{ItenbergKatzarkovMikhalkinZharkovTropicalhomology2019} 
   and also by Gubler to study
     tropical compactifications over non-trivially valued fields.
  Strictly semi-stable reductions of tropical varieties might be known for some specialists.
    However, to the author's knowledge, there seems to be no explicit reference in the literature. 

  Let $\Sigma$ be a complete unimodular fan in $N_\R$,
  and $(X,w) $
    a tropical variety in $ \Trop (T_{\Sigma})$ regular at infinity. 
  Since we discuss tropical cohomology 
    with rational coefficients, 
   by  \Cref{the existence of unimodular triangulation}, 
   without loss of generality, 
   we may assume that there is 
   $N$-unimodular
    $\Q$-rational 
    polyhedral complex strucuture $\Lambda$ 
    of the tropical variety $(X,w)$.
    Then by unimodularity (\Cref{def unimodularity:label}), $\rec ( \Lambda \cap N_\R ):= \{ \rec (P \cap N_\R  ) \}_{P \in \Lambda }$ forms a subfan of $\Sigma$,
   where $\rec(P \cap N_\R )$ is the unique cone such that 
       $P  \cap N_\R  $ is the Minkowski sum of 
        $\rec(P \cap N_\R )$ and a compact polyhedron.

   We consider a new $\Z$-module $N \times \Z$.
  Positive real numbers $\R_{>0}$ acts on $\Trop (T_{\Sigma}) \times \R$ by the component-wise multiplication. 

  \begin{dfn}\label{fans over polyhedral complex and quasi-projectivity:label}
   Let $\Xi $ be a $\Q$-rational polyhedral complex in $\Trop ( T_{\Sigma} )$.
    When  the closure of 
    $\R_{\geq 0} \cdot  
    ( \lvert \Xi \rvert \times \{1 \} )$
   in $\Trop ( T_{\Sigma} ) \times \R$
   has a (unique) fan structure $\tilde{\Xi}$
   such that 
   $$ \Xi  \times \{1 \} 
      = \{ \tilde{P} \cap (\Trop ( T_{\Sigma} ) \times \{ 1 \})
       \}_{\tilde{P} \in \tilde{\Xi}} ,
        $$
    we call $\tilde{\Xi}$ the fan over $\Xi$.
  \end{dfn}

  \begin{rem}
  When $\lvert \Xi \rvert \cap N_\R \subset \lvert \Xi \rvert $ is dense, 
   there exists such a fan $\tilde{\Xi}$
   if and only if 
   $\rec (\Xi \cap N_\R) $ forms a fan.
   In particular, there is a fan $\tilde{\Lambda}$ 
    over $\Lambda$.
  \end{rem}

  We put 
  $$\tilde{\Lambda} |_{ N_\R \times \R } 
     := \{ \tilde{P} \cap (N_\R \times \R) \}_{\tilde{P} \in \tilde{\Lambda}}, $$
     a fan in $N_\R \times \R$,
   and put 
   $\Psi \colon \Trop (T_{\tilde{\Lambda} |_{ N_\R \times \R }})\to \R \cup \{ \infty \} $
   the natural extension of the projection 
   $ N_\R \times \R \to \R$.
   We also put 
   $$ \X := \overline{ \R_{>0 } \cdot ( X\times \{ 1  \} )}
   \subset \Trop (T_{\tilde{\Lambda}|_{  N_\R \times \R } })   $$
   the closure.  Obviously, $\X $ is regular at infinity in 
   $\Trop (T_{\tilde{\Lambda}|_{  N_\R \times \R } })   $.
   We put $I 
     \subset \tilde{\Lambda} |_{ N_\R \times \R } 
     $ the subset of $1$-dimensional cones $l$ with $\Psi (l) = \R_{\geq 0}$.
     For each $J \subset I$, 
     we put 
     $$ X_J: = 
         \X \cap \bigcap_{ l \in J  }
           \overline{ ( N \times \Z )_{ l , \R } }
          =  
          \bigcup_{ \bigcup_{l \in J} l \subset P \in \tilde{ \Lambda }|_{ N_\R \times \R , \dim X +1}
           }
          \overline{P} \cap 
         \bigcap_{ l \in J  }
           \overline{ ( N \times \Z )_{ l , \R } }.
           $$
    We put
      $$w_J  |_{ \relint (
        \overline{P} \cap 
         \bigcap_{ l \in J  }
           \overline{ ( N \times \Z )_{ l , \R } }
                )  }
            := w_P. 
      $$
    We endow $X_J$ with a sturucture of tropical variety $(X_J, w_J)$ of dimension ($\dim X - \# J +1$).
    To simplify notation, for $l \in I$, 
      we put $X_l:= X_{ \{ l \} }$.
   The set 
   $$ X_{\infty} := \X \cap \Psi^{-1} (\infty) 
         = \bigcup_{ l \in I } X_l
         $$
    is also a tropical variety (by $w_{ \{ l \} }$ ($l \in I$)), 
    called a \emph{strictly semi-stable reduction} of $X$.

  \begin{rem}
   In the realizable case, reductions of tropical varieties 
    are compatible with those of algebraic varieties 
    under tropicalizations.
   This is essentially done in \cite{GublerAguidetotropicalizations2013}. 
  \end{rem}

  \begin{rem}\label{quasi-projectivity of polyhedral complex structures}
  We assume that $\Sigma$ is projective.
  Then
   $\tilde{\Lambda} |_{ N_\R  \times \R}$ is 
   finer than a quasi-projective fan
   $\{ \sigma \times \{ 0 \}, \sigma \times \R_{\geq 0} \}_{
     \sigma \in \Sigma }$,
     and hence is 
   unimodular quasi-projective fan. 

   In particular, 
     an ample line bundle $l \in \CH^1 (T_\Xi) \cong H^{1,1}_{\Trop} (\Trop (T_\Xi))$
     of a smooth projective toric compactification $T_\Xi$
      of $T_{\tilde{\Lambda} |_{   N_\R \times \R } } $ 
      gives a K\"{a}hler class of 
    $(X,w, \Lambda)$  
    in the sense of \cite[Definition 6.5]{AminiPiquerezHodgetheoryfortropicalvarieties2020}. 
    A K\"{a}hler class is needed to give 
    a bigraded (Hodge)-Lefschetz structure of $H^{p,q}_{\Trop}(X)$ when $X$ is smooth (see [loc.cit.] and also \cite{GuillenNavarroSurlethormelocaldescyclesinvariantsAznar1990}).
  \end{rem}

 \subsection{Monodromy-weight spectral sequences}\label{sub:monodromy_weight_spectral_sequences}
  In this subsection, 
  we shall prove the main theorem 
  (\Cref{spectral sequence for ss reduction}). 
  Namely, we shall give  
  a new construction of 
  the spectral sequences given by Itenberg-Katzarkov-Mikhalkin-Zharkov \cite{ItenbergKatzarkovMikhalkinZharkovTropicalhomology2019} 
  (in the realizable and smooth case)
  and Amini-Piquerez \cite{AminiPiquerezHodgetheoryfortropicalvarieties2020}  
  (in the smooth case)
  in the same way as Steenbrink \cite{SteenbrinkLimitsofHodgestructures1976} (see also \cite[Chapter 11]{PetersSteenbrinkMixedHodgestructures2008})
  using relative log tropical homolomorphic forms.

  We shall use notation in the previous subsection.

   First, we shall show that $H_{\Trop}^{p,q} (X)$ is isomorphic to cohomology of a relative log sheaf $\F^p_{\X / \R \cup \{ \infty  \} } (\log X_{\infty} )|_{ X_{\infty}  }$ on $X_{\infty}$ (Proposition \ref{ss reduction trop coh = log coh}).
  We put $\X^{\circ} :=\X \setminus \Psi^{-1}( 0  )$ 
  and $i \colon \X \setminus X_{\infty} 
  \hookrightarrow \X$.
   Since $\X $ is regular at infinity, 
  we can easily see that 
  $R^i i_* \F^p_{\X  \setminus X_{\infty}} = 0$ ($i \geq 1$). 
  Hence we have 
  \begin{align}
   H^{q} ( \X^{\circ} \setminus X_{\infty} ,
    \F^p_\X |_{\X^{\circ} \setminus X_{\infty}}) 
   \cong H^{q} ( \X^{\circ}, 
   \F^p_{\X} (\log X_{\infty} ) |_{ \X^{\circ}}  ),
    \label{eq tro coh relative log1}
  \end{align}
  where we put $
   \F^p_{\X} (\log X_{\infty} ) := i_* \F^p_{\X \setminus X_{\infty}}  $.
  We put 
  $$ \F^1_{\R \cup \{ \infty  \}} (\log \{ \infty  \}) 
    \wedge \F^{p-1}_\X (\log X_{\infty})$$
  the subsheaf of $\F^p_\X (\log X_{\infty})$
  generated by the image of 
  $ \Psi^{*} \F^1_{\R \cup \{ \infty  \}} (\log \{ \infty  \}) 
    \otimes \F^{p-1}_\X (\log X_{\infty})$
    under the wedge product.
  We put $t := (0,1)\in M\times \Z =\Hom (N \times \Z,\Z)$.
  
  \begin{lem}\label{Fp log computation}
    For $p \geq 0$, there is an exact sequenece 
    \begin{align*}
    0 \to \F^1_{\R \cup \{ \infty  \}} (\log \{ \infty  \}) 
     \wedge \F^{p-1}_\X (\log X_{\infty}) |_{ \X^{\circ} }
       \to &  \F^p_\X (\log X_{\infty}) |_{ \X^{\circ} } \\
       \to & \F^1_{\R \cup \{ \infty  \}} (\log \{ \infty  \}) 
     \wedge \F^{p}_\X (\log X_{\infty}) |_{ \X^{\circ} }
       \to 0 , 
    \end{align*}
    where the third morphism is given by $ t \wedge -$,
    and 
    we put 
    $$\F^1_{\R \cup \{ \infty  \}} (\log \{ \infty  \}) 
     \wedge \F^{-1}_\X (\log X_{\infty}) :=0 .$$
  \end{lem}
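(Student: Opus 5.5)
The sequence is a sequence of constructible sheaves, so I will check exactness stalkwise at each point $x \in \X^{\circ}$, using the local description (\ref{log Fp local description}) of $\F^p_\X(\log X_\infty)_x$. Two things need proof: the map $t\wedge -$ is well defined (it lands in the stated subsheaf), and the resulting three-term sequence is exact.

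\textbf{Points off $X_\infty$.} Let $x\in \X^{\circ}\setminus X_\infty$. Then $\Psi(x)\in \R_{>0}$, so $x$ lies over $\R$ in the target $\R \cup\{\infty\}$. Because $\X$ is the closure of the cone $\R_{>0}\cdot(X\times\{1\})$, it is locally, near such $x$, a product with the $\R_{>0}$-direction. More precisely, the vector $(x_0,1)$ spanning the ray through $x$ lies in $\Tan$ of every polyhedron containing $x$, and $t$ pairs to $1$ with it. It follows that $\F^p_{\X,x}$ splits as $\bigwedge^p V \oplus t\wedge \bigwedge^{p-1}V$, where $V=\{m : m(x_0,1)=0\}$ restricted to the relevant tangent spaces. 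In other words, $\F^p_{\X,x}\cong \F^p_{Y,y}\otimes \bigwedge^\bullet\Q t$ for the slice $Y$. The sequence is then the standard Koszul-type sequence for wedging with a nonzero degree-one element in an exterior algebra with a free factor $\Q t$:
\[
0\to t\wedge A^{p-1}\to A^p\xrightarrow{t\wedge-} t\wedge A^{p}\to 0 .
\]
This is exact: the kernel of $t\wedge$ is $t\wedge A^{p-1}$, and the map onto $t\wedge A^p$ is surjective by definition.

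\textbf{Points of $X_\infty\cap\X^{\circ}$.} Let $x\in X_\infty\cap \X^{\circ}$ and let $I_x\subset I$ be the set of divisors $X_l$ through $x$. By the local description, $\F^p_\X(\log X_\infty)_x \cong \bigoplus_i \F^{p-i}_{\X\cap N_{\sigma_x,\R},x}\otimes \bigwedge^i\Q\langle f_l\rangle_{l\in I_x}$. The key input is unimodularity of $\tilde\Lambda|_{N_\R\times\R}$, i.e. that $\Lambda$ is $N$-unimodular. Since each $l\in I$ satisfies $\Psi(l)=\R_{\geq 0}$ and its primitive vector has last coordinate $1$ (this is where $v_i\in N$, i.e. $k=1$, is used), we have $t=\sum_{l\in I_x} f_l$ modulo $\sigma_x^\perp$-terms. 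Concretely, $t$ restricted to the cone $\sigma_x$ is $\sum_{l\in I_x} f_l + (\text{terms from rays of } \sigma_x \text{ not in } I)$; the latter vanish because those rays lie in $N_\R\times\{0\}$. Also, $\Psi^*\F^1_{\R\cup\{\infty\}}(\log\{\infty\})$ is spanned at $x$ by $t$ (log forms on $\R\cup\{\infty\}$ near $\infty$).

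I will then change basis in $\Q\langle f_l\rangle_{l\in I_x}$ to one containing $t$, so that $\bigwedge^\bullet\Q\langle f_l\rangle = \bigwedge^\bullet \Q t\otimes \bigwedge^\bullet W$ with $W$ complementary. This gives $\F^p_\X(\log X_\infty)_x \cong A^p$, where $A=B\otimes\bigwedge^\bullet\Q t$ is again an exterior module with a free factor $\Q t$. This needs $I_x\neq\emptyset$, which holds at points of $X_\infty$. The same Koszul argument as above then gives exactness, and the subsheaf generated by $t\wedge$ is exactly $t\wedge A^{p-1}$ at the stalk.

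\textbf{Main obstacle.} The main obstacle is this second step: showing that $t$ lies in the span of the $f_l$ with coefficient $1$ and has no component in $\F_{\X\cap N_{\sigma_x,\R}}$. The difficulty is identifying correctly the identification of $\Trop(T_{\sigma_x})$, and checking that the rays of $\sigma_x$ outside $I$ are horizontal. I would resolve this using $N$-unimodularity (vertices $v_i\in N$ give ray generators $(v_i,1)$) together with the description of $\tilde\Lambda$ as finer than $\{\sigma\times\{0\},\sigma\times\R_{\geq0}\}$. Finally, I will check that restricting to $\X^{\circ}$ avoids $\Psi^{-1}(0)$, where $t$ could vanish on tangent spaces.
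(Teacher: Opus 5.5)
Your proposal is correct and is essentially the paper's argument. The paper's proof just says the lemma follows stalkwise from the local description (\ref{log Fp local description}), together with the homeomorphism $\X^{\circ}\setminus X_{\infty}\cong X\times(0,\infty)$; your Koszul-type check at points off and on $X_\infty$ unpacks exactly that. One small correction: the $\sigma_x^{\perp}$-component of $t$ is not controlled by unimodularity but by the non-canonical choice of splitting in (\ref{log Fp local description}), which you can take with the complementary basis vectors lying in $\ker t$. Moreover, over $\Q$ exactness only needs the image of $t$ in $\Q\langle f_l\rangle_{l\in I_x}$ to be nonzero, so the coefficient being exactly $1$ (i.e.\ $k=1$) is not actually needed for this lemma.
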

  \begin{proof}
  This follows from (\ref{log Fp local description}).
   (Note also that we have a homeomorphism
  $$\X^{\circ} \setminus X_{\infty}  \cong X \times (0,\infty).)$$
  \end{proof}
  
  By (\ref{eq tro coh relative log1}), 
  Lemma \ref{Fp log computation},
  long exact sequences of cohomology, 
  and induction on $p \geq 0$, 
  we have 
  \begin{align}\label{eq cohomology eq. wedge FR1}
    & H^{q} ( \X^{\circ} \setminus X_{\infty} ,
      \F^1_{\R \cup \{ \infty  \}} 
    \wedge \F^{p}_\X  |_{
      \X^{\circ} \setminus X_{\infty}
    } )  
   \cong  & H^{q} ( \X^{\circ}, 
     \F^1_{\R \cup \{ \infty  \}} (\log \{ \infty  \}) 
    \wedge \F^{p}_\X (\log X_{\infty})  |_{ \X^{\circ} }
     ).
  \end{align}
  We put 
   $$\F^p_{\X / \R \cup \{ \infty  \} } (\log X_{\infty} ) :=
    \F^p_\X (\log X_{\infty}) / 
    \F^1_{\R \cup \{ \infty  \}} (\log \{ \infty  \}) 
    \wedge \F^{p-1}_\X (\log X_{\infty}).$$
  Obviously, we have
  $$ \F^p_{\X / \R \cup \{ \infty  \} } (\log X_{\infty} ) |_{X} \cong \F^p_X.$$
  Hence  by (\ref{eq tro coh relative log1}),
   (\ref{eq cohomology eq. wedge FR1}), 
  and 
  long exact sequences of cohomology, 
  we have 
  $$ 
  H^{p,q}_{\Trop} (X) =
  H^{p+q} ( X, \F^p_X [ -p ]) 
   \cong H^{p+q} ( \X^{\circ} , 
    \F^p_{\X / \R \cup \{ \infty  \} } (\log X_{\infty} ) [ -p ] 
     |_{ \X^{\circ} }). $$

  By using a retraction
     $ \X^{\circ} \to X_{\infty}$ preserving polyhedral complex structures (cf. \cite[Section 3]{JellShawSmackaSuperformstropicalcohomologyandPoincarduality2019}), 
  we also have 
  $$
    H^{p+q} ( \X^{\circ} ,  \F^p_{\X / \R \cup \{ \infty  \} } (\log X_{\infty} )  [ -p ] 
     |_{ \X^{\circ} })
   \cong H^{p+q} ( X_{\infty} ,  
   \F^p_{\X / \R \cup \{ \infty  \} } (\log X_{\infty} ) [-p] |_{ X_{\infty}  })
   .$$

  Consequently, we have the following (cf. \cite[Corollary 2.16]{SteenbrinkLimitsofHodgestructures1976}, \cite[Theorem 11.16]{PetersSteenbrinkMixedHodgestructures2008}).
  \begin{prp}\label{ss reduction trop coh = log coh}
  $$  H^{p,q}_{\Trop} ( X) 
  \cong  H^{p+q} ( X_{\infty} ,  
   \F^p_{\X / \R \cup \{ \infty  \} } (\log X_{\infty} )[-p] |_{ X_{\infty}  }).$$
  \end{prp}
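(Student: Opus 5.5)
The isomorphism will be assembled as a chain of three identifications, essentially the ones sketched just before the statement. Throughout, $X^\circ$ denotes $\X^{\circ}$ and $U := \X^{\circ}\setminus X_{\infty}$.

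\emph{Step 1: pass from $X$ to $U$.} Since $U \cong X \times (0,\infty)$ as stratified spaces, the K\"unneth isomorphism (\ref{eq. Kunneth F level}) splits $\F^p_U$ as $\F^p_X\boxtimes\F^0_{(0,\infty)} \oplus \F^{p-1}_X\boxtimes\F^1_{(0,\infty)}$. The second summand is exactly $\F^1_{\R\cup\{\infty\}}\wedge\F^{p-1}_\X|_U$, so the quotient is $\F^p_X$ pulled back along the projection. Homotopy invariance along the contractible factor $(0,\infty)$ then gives $H^q(U,\F^p_\X|_U/\F^1\wedge\F^{p-1}_\X|_U)\cong H^{p,q}_{\Trop}(X)$.

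\emph{Step 2: extend across $X_\infty$ on $\X^{\circ}$.} Regularity at infinity gives $R^ii_*=0$ for $i\ge1$, which yields (\ref{eq tro coh relative log1}). For the wedge subsheaf we get (\ref{eq cohomology eq. wedge FR1}) by induction on $p$: apply the short exact sequence of \Cref{Fp log computation} on $\X^{\circ}$ and its analogue on $U$, compare the two long exact sequences through the restriction maps, and use the five lemma. The base case is $p=0$, where the subsheaf vanishes. The local description (\ref{log Fp local description}) ensures that restriction to $U$ is compatible with both sequences. Taking quotients and applying the five lemma once more identifies the cohomology of $\F^p_{\X/\R\cup\{\infty\}}(\log X_\infty)|_{\X^{\circ}}$ with that of the quotient sheaf on $U$, which by Step 1 is $H^{p,q}_{\Trop}(X)$. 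Shifting by $[-p]$ gives degree $p+q$.

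\emph{Step 3: retract onto $X_\infty$.} I expect this to be the main obstacle. The plan is to build a deformation retraction $\rho\colon\X^{\circ}\to X_\infty$ that is a union of linear projections on each cone of $\tilde\Lambda$, in the style of \cite[Section 3]{JellShawSmackaSuperformstropicalcohomologyandPoincarduality2019}. Concretely, on $\overline{P}$ we push the $\R$-coordinate toward $\infty$ along the rays $l\in I$, using a neighborhood of $X_\infty$ of the form $\Psi^{-1}((c,\infty])$ and rescaling via the $\R_{>0}$-action.

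The sheaf $\mathcal G := \F^p_{\X/\R\cup\{\infty\}}(\log X_\infty)$ is constructible with respect to the stratification by relative interiors of the cells $\overline P\cap\bigcap_{l\in J}\overline{(N\times\Z)_{l,\R}}$. The point to verify is that for each $x\in X_\infty$ the restriction $\Gamma(\rho^{-1}(V_x),\mathcal G)\to\mathcal G_x$ is an isomorphism with vanishing higher cohomology, where $V_x$ is a small conical neighborhood. This reduces, via (\ref{log Fp local description}), to the statement that the generic stalks along the fiber $\rho^{-1}(x)$ agree with the stalk at $x$ modulo $t\wedge$. The log forms $f_l$ together with the relation that $t$ is a sum of the $f_l$ are exactly what make the stalks constant along the fibers, and this is where the strictly semistable (unimodular) structure is needed.

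Granting this, the Vietoris--Begle theorem for constructible sheaves gives $R\Gamma(\X^{\circ},\mathcal G)\cong R\Gamma(X_\infty,\mathcal G|_{X_\infty})$. Composing with Steps 1 and 2 proves the proposition.
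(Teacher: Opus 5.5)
Your three steps follow the paper's own route: (\ref{eq tro coh relative log1}) from $R^i i_*=0$, then induction on $p$ with \Cref{Fp log computation} and the five lemma to get (\ref{eq cohomology eq. wedge FR1}), then passage to the quotient and identification on $U=\X^{\circ}\setminus X_\infty$, and finally the retraction onto $X_\infty$, which the paper only cites. However, Step~1 rests on a false claim.

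\textbf{The false splitting in Step~1.} The map $X\times(0,\infty)\to U$, $(x,\lambda)\mapsto(\lambda x,\lambda)$, is a homeomorphism. It is not an isomorphism of integral affine structures unless $X$ is a fan. So (\ref{eq. Kunneth F level}) does not apply, and $\F^p_U$ does not split. It is only an extension
\[
0\to \mathrm{pr}^*\F^{p-1}_X\xrightarrow{\,t\wedge\,}\F^p_U\to \mathrm{pr}^*\F^{p}_X\to 0 ,
\]
and this extension is in general non-trivial. For $p=1$, on the slice $X\times\{1\}$ it becomes $0\to\Q\to\mathcal A\to\F^1_X\to 0$, where $\mathcal A$ is the sheaf of affine functions $y\mapsto m(y)+c$ with $m\in M_\Q$, $c\in\Q$. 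Its connecting map $H^{1,0}_{\Trop}(X)\to H^{0,1}_{\Trop}(X)$ integrates a tropical $1$-form along cycles. This is nonzero, for example, for a smooth tropical plane cubic. More generally, the connecting map of this extension is, up to sign, exactly $\res(\nabla)$ computed on $U$. \Cref{eigenwave is residue of GM connections:label} identifies $\res(\nabla)$ with the eigenwave action, so your splitting would force the eigenwave to vanish.

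\textbf{How to repair Step~1.} You only use that $\F^p_U/(t\wedge\F^{p-1}_U)\cong\mathrm{pr}^*\F^p_X$, and this is true. Prove it directly.
\begin{itemize}
\item Take a cell $Q$ of $X$ with cone $\tilde Q$. Choose a rational $x_0\in Q$ and put $e=(x_0,1)$.
\item Every cell containing $Q$ contains $x_0$, so $F_p(\tilde Q)=F_p(Q)\oplus F_{p-1}(Q)\wedge e$ inside $\bigwedge^p N_\Q\oplus\bigwedge^{p-1}N_\Q\wedge e$.
\item Hence the dual of $F^p_U(\tilde Q)/t\wedge F^{p-1}_U(\tilde Q)$ is $F_p(\tilde Q)\cap\bigwedge^p\ker t=F_p(Q)$.
\item This description is independent of $x_0$, compatible with restriction maps, and $\R_{>0}$-invariant. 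Homotopy invariance then finishes Step~1.
\end{itemize}
The choice of $x_0$ cannot be made globally. The resulting discrepancy $m\mapsto m(v-w)$ on an edge $[v,w]$ is precisely the eigenwave cocycle.

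\textbf{Two corrections to Step~3.} The outline is fine, but:
\begin{itemize}
\item Vietoris--Begle is a statement about proper maps, and $\rho$ is not proper; already for $X$ a point, $\rho$ is $(0,\infty]\to\{\infty\}$. What you need, and what your stalkwise condition actually gives, is that the adjunction map $R\rho_*\mathcal G\to R\rho_*\iota_*\iota^*\mathcal G=\mathcal G|_{X_\infty}$ is an isomorphism. Here $\iota\colon X_\infty\hookrightarrow\X^{\circ}$, and no properness is needed.
\item Constancy along the fibres does not come from $t=\sum f_l$. It comes from (\ref{log Fp local description}) together with regularity at infinity of $\X$. These make $\F^p_{\X}(\log X_\infty)$ itself locally pulled back along the projection that forgets the $I_x$-coordinates. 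The quotient by $t\wedge\F^{p-1}_{\X}(\log X_\infty)$ inherits this because $t$ is a global section.
\end{itemize}
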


  Next, 
  we shall construct 
   the spectral sequence 
  (\Cref{spectral sequence for ss reduction}) 
   converging to the cohomology 
  $H^{r+q} ( X_{\infty} ,  
   \F^r_{\X / \R \cup \{ \infty  \} } (\log X_{\infty} )[-r] |_{ X_{\infty}  })$.
  We put 
  $$A^{r,*-r} := \F^{*+1}_{\X} (\log X_{\infty} )/ W_{*-r} \F^{*+1}_{\X} (\log X_{\infty} ) |_{X_{\infty}}$$
  ($* \geq r$),
  a complex of sheaves with 
  $$d \colon A^{r,q-r } \ni \alpha \mapsto  \alpha  \wedge t \in A^{r,q-r +1}.$$
  By (\ref{log Fp local description}), 
  we can easily see that 
  a morphism 
  $$\F^r_{\X / \R \cup \{ \infty  \} } (\log X_{\infty} )  |_{ X_{\infty}  }
  \ni \alpha 
  \mapsto   \alpha \wedge t
   \in A^{r,r-r}$$
  induces a quasi-isomorphism 
  $$\F^r_{\X / \R \cup \{ \infty  \} } (\log X_{\infty} ) [-r] |_{ X_{\infty}  } \cong A^{r,*-r}.$$
  We put 
  $$ W(M)_{-p} A^{r,q-r} 
  := \Im (W_{-p+2(q-r) +1}\F^{q+1}_{\X } (\log X_{\infty} ) |_{ X_{\infty}  }  \to A^{r,q-r}). $$
  Since we have $d W(M)_{-p} \subset W(M)_{-p-1}$, 
  we have a subcomplex $W(M)_{-p} A^{r,*-r} $
  of $ A^{r,*-r} $, 
   and the quotient $\Gr^{W(M)}_{-p} A^{r,*-r} $ has zero differential (i.e., $d=0$).
   More presicely, 
   for fixed orders 
   $ o(J)= ( j_1 < \dots < j_{ \# J } )$
   of $J = \{ j_1, \dots, j_{ \# J } \} \subset I$,
  we have 
  \begin{align*}
   (\Res_{ o(J) })_J 
   \colon 
   \Gr^{W(M)}_{-p} A^{r,*-r} 
   \cong &  
    \bigoplus_{\substack{ r \leq u 
               \\   (u-r) +1 \leq -p + 2(u-r) +1 } }
    \Gr^W_{-p +2(u -r) +1} \F_{\X}^{u+1} (\log X_{\infty}) |_{X_{\infty}} 
    [-u] \\
   \cong &  
    \bigoplus_{ \max \{0, p \} \leq u-r  }
    \bigoplus_{\substack{ J \subset I \\ \# J = -p + 2(u-r) +1 } }
     j_{J,*} \F_{X_{J}}^{2r-u+p}  [-u], 
  \end{align*}
  where we put $j_J \colon X_J \hookrightarrow X_{\infty}$ the inclusion (see Subsection \ref{subsec log tropical cohomology}).
  
  Consequently, we get the following.
  \begin{thm}\label{spectral sequence for ss reduction}
    For $r \geq 0$, 
    the $E_1$-terms of the spectral sequence 
    $$E_1^{p,q}= H^{p+q} (X_{\infty}, \Gr^{W(M)}_{-p} A^{r,*-r}  )  \Rightarrow H^{r,p+q-r}_{\Trop } (X)$$
      induced by the monodromy-weight filtration $(W(M)_s A^{r,*-r})_{s } $ are isomorphic to
    $$\bigoplus_{ \max \{0,p\} \leq u -r }
    \bigoplus_{\substack{ J \subset I \\ \# J = -p + 2 (u-r) +1 } }
  H^{2r-u+p, p+q -u}_{\Trop} (X_{J}). $$
  \end{thm}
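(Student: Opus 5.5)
The plan follows Steenbrink's argument, using the pieces already assembled above. First, for fixed $r$, I identify the target of the spectral sequence. Proposition \ref{ss reduction trop coh = log coh} gives $H^{r,p+q-r}_{\Trop}(X)\cong H^{p+q}(X_\infty,\F^r_{\X/\R\cup\{\infty\}}(\log X_\infty)[-r]|_{X_\infty})$. The map $\alpha\mapsto\alpha\wedge t$ is a quasi-isomorphism onto $A^{r,*-r}$. I would justify this stalkwise via (\ref{log Fp local description}): at $x\in X_\infty$ the stalk of $\F^{*+1}_\X(\log X_\infty)$ is $\bigoplus_i \F^{*+1-i}_{\X\cap N_{\sigma_x,\R},x}\otimes\bigwedge^i\Q\langle f_l\rangle_{l\in I_x}$. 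Moreover $t$ restricts to $\sum_{l\in I_x} f_l$ modulo $M\cap\sigma_x^\perp$, because each $l\in I$ has primitive generator with last coordinate $1$; this is the point where $N$-unimodularity of $\Lambda$ is used. The complex $(A^{r,*-r},\wedge t)$ is therefore a truncated Koszul complex on the exterior algebra $\bigwedge\Q\langle f_l\rangle$ with respect to the nonzero element $\sum f_l$, and its exactness in positive degrees is the standard Steenbrink computation. Thus $A^{r,*-r}$ computes $H^{r,*-r}_{\Trop}(X)$. I then take the spectral sequence of the filtered complex $(A^{r,*-r},W(M))$. It is bounded, since $W$ has finite length on each stalk, so it converges to $H^{p+q}(X_\infty,A^{r,*-r})\cong H^{r,p+q-r}_{\Trop}(X)$ with $E_1^{p,q}=H^{p+q}(X_\infty,\Gr^{W(M)}_{-p}A^{r,*-r})$.

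Second, I compute the graded pieces. I check that $d=\wedge t$ maps $W_{k}\F^{q+1}$ into $W_{k+1}\F^{q+2}$, since wedging with a log form raises the weight by at most one. This gives $dW(M)_{-p}A^{r,q-r}\subset W(M)_{-p-1}A^{r,q-r+1}$, so $\Gr^{W(M)}_{-p}$ has zero differential. In degree $u$ the graded piece is $\Gr^W_{-p+2(u-r)+1}\F^{u+1}_\X(\log X_\infty)|_{X_\infty}$, provided the index is not killed by the quotient by $W_{u-r}$. That condition is exactly $-p+2(u-r)+1\ge (u-r)+1$, that is $u-r\ge p$, together with $u\ge r$. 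Applying the residue isomorphism $\Gr^W_s\F^{m}(\log D)\cong\bigoplus_{\#J=s}j_{J*}\F^{m-s}_{D_J}$ from Subsection \ref{subsec log tropical cohomology} to the fan $\X$ near $X_\infty$, with $D=X_\infty$ and $D_J=X_J$, gives $\F^{u+1-(-p+2(u-r)+1)}_{X_J}=\F^{2r-u+p}_{X_J}$ placed in degree $u$. These are local statements near each point of $X_\infty$, where $\X$ is locally a tropical fan regular at infinity, so the earlier results apply.

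Third, I take hypercohomology of this direct sum of shifted sheaves with zero differential: $H^{p+q}(X_\infty,j_{J*}\F^{2r-u+p}_{X_J}[-u])=H^{p+q-u}(X_J,\F^{2r-u+p}_{X_J})=H^{2r-u+p,\,p+q-u}_{\Trop}(X_J)$, using that $j_J$ is a closed embedding so $j_{J*}$ is exact. Summing over $u$ and $J$ gives the stated $E_1$-terms.

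The main obstacle is the local quasi-isomorphism $\F^r_{\X/\R\cup\{\infty\}}(\log X_\infty)[-r]\to A^{r,*-r}$, together with the exact compatibility of $W(M)$ with the residue maps. Both require careful bookkeeping of the decomposition (\ref{log Fp local description}) with the element $t=\sum f_l+(\text{part in }M\cap\sigma_x^\perp)$. I would first reduce, by a unimodular change of coordinates on $M\times\Z$, to the case where the $M\cap\sigma_x^\perp$ part vanishes, and then follow Steenbrink's proof of \cite[Theorem 11.16/Cor.~11.23]{PetersSteenbrinkMixedHodgestructures2008} verbatim.
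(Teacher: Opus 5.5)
Your proposal is correct and follows essentially the same route as the paper. Both arguments identify $H^{r,*}_{\Trop}(X)$ with the relative log cohomology on $X_\infty$, pass to $A^{r,*-r}$ via $\alpha\mapsto\alpha\wedge t$, check $dW(M)_{-p}\subset W(M)_{-p-1}$, and read off $\Gr^{W(M)}_{-p}A^{r,*-r}$ through the residue isomorphisms. The main difference is that you write out the stalkwise Koszul argument for the quasi-isomorphism, including the normalization $t\equiv\sum_{l\in I_x}f_l$, which the paper leaves as an easy consequence of the local description of $\F^r_X(\log D)$.
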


  Finally, 
   we shall show that 
   the differential $d_1$
   of $E_1$-terms of the spectral sequence 
   in \Cref{spectral sequence for ss reduction}
    is
    given by pull-back maps 
    and 
    Gysin maps up to signs (\Cref{differential of monodromy-weight spectral sequence:label})
    (cf. \cite[Lemme 2.7]{GuillenNavarroSurlethormelocaldescyclesinvariantsAznar1990}, 
     \cite[Subsection 11.3.2]{PetersSteenbrinkMixedHodgestructures2008}).
   The differential $d_1 \colon E_1^{p,q} \to E_1^{p+1,q}$  
   is the  connecting homomorphism 
   \begin{align}\label{eq d1 of weight-monodromy spectral sequences}
     H^{p+q} (X_{\infty}, \Gr^{W(M)}_{-p} A^{r,*-r}  ) 
     \to 
     H^{p+q+1} (X_{\infty}, \Gr^{W(M)}_{-p-1} A^{r,*-r}  ) 
   \end{align}
   of 
   $$ 0 \to 
     \Gr^{W(M)}_{-p-1} A^{r,*-r} 
     \to 
      W(M)_{-p} A^{r,*-r} / W(M)_{-p-2} 
     \to 
      \Gr^{W(M)}_{-p} A^{r,*-r} 
      \to 0, $$
    where to simplify notation, we put 
    $$ 
      W(M)_{-p} A^{r,*-r} / W(M)_{-p-2} 
      :=
      W(M)_{-p} A^{r,*-r} / W(M)_{-p-2} A^{r,*-r} .$$
    For $0 \leq u-r$,
      we have 
    \begin{align*}
      & W (M)_{-p} A^{r,u-r} / W(M)_{-p-2}   \\
      = &
          \begin{cases}
       ( W_{-p+2(u-r)+1}  \F_{\X}^{u+1} (\log X_{\infty}) 
       / W_{-p+2(u-r)-1}  )
        |_{X_{\infty}} 
        &
        (  p \leq u-r -1)
        \\ 
       \Gr^W_{-p+2(u-r)+1}  \F_{\X}^{u+1} (\log X_{\infty}) 
        |_{X_{\infty}} 
        &
        (  p = u-r  ) \\ 
        0 & ( p  \geq u-r +1 ).
        \end{cases}
    \end{align*}

  By applying functorial flabby resolutions preserving the exactness in $\Shv (X_{\infty})$
    (e.g., $A \mapsto \prod_{x \in X_{\infty}} A_x $, 
     where $A_x$ is the stalk identified with 
        the skyscraper sheaf at $x$)
        to both 
   $\Gr^{W(M)}_{j} A^{r,*-r} $ ($j=-p-1,-p$) 
    and 
   $    W(M)_{-p} A^{r,*-r} / W(M)_{-p-2} $, 
   we can compute connecting homomorphism (\ref{eq d1 of weight-monodromy spectral sequences})
   using (the snake lemma and) 
   the differential of the total complex $s(I^{*,*})$ (\Cref{sec:sign_convention}) of 
   a double complex $(I^{*,*},d', d'')$ 
   such that 
   $I^{u,*}=0$ ($u \leq r-1$), 
   the complex $I^{u,*}$ is an injective resolution of 
   $    W(M)_{-p} A^{r,u-r} / W(M)_{-p-2} $ ($u \geq r$), 
   and 
   $s(I^{*,*})$ is an injective resolution of 
    $   W(M)_{-p} A^{r,*-r} / W(M)_{-p-2} $.
  We shall compute morphisms 
  \begin{align}\label{eq d1 separetely}
     H^{p+q} (X_{\infty}, \Gr^{W(M)}_{-p} A^{r,*-r}  ) 
     \to 
     H^{p+q+1} (X_{\infty}, \Gr^{W(M)}_{-p-1} A^{r,*-r}  ) 
  \end{align}
    given by $d'$ and $(-1)^u d'' \colon I^{u,*} \to I^{u,*+1}$, separately.

    Morphism (\ref{eq d1 separetely}) 
    given by $d'$
    is given by the differential  
   $$ -  \wedge t \colon   W(M)_{-p} A^{r,u-r} / W(M)_{-p-2}  
     \to    W(M)_{-p} A^{r,u+1-r} / W(M)_{-p-2} , $$
   which factors through 
   \begin{align*}
      W(M)_{-p} A^{r,u-r} / W(M)_{-p-2}  
       \twoheadrightarrow &
      \Gr^W_{-p+2(u-r)+1}  \F_{\X}^{u+1} (\log X_{\infty}) 
       |_{X_{\infty}}  \\
       \to &
      \Gr^W_{-p+2(u-r)+2}  \F_{\X}^{u+2} (\log X_{\infty}) 
       |_{X_{\infty}}  \\
       \hookrightarrow &
      W(M)_{-p} A^{r,u+1-r} / W(M)_{-p-2} 
       .
   \end{align*}
   Hence the morphism is exactly 
     the morphism of cohomology groups
    induced by 
     $$- \wedge t \colon
      \Gr^{W(M)}_{-p} A^{r,*-r} 
       \to \Gr^{W(M)}_{-p-1} A^{r,*-r} [1], $$
    which is the direct sum 
    \begin{align*}
      \Gr^{W(M)}_{-p} A^{r,u-r} 
      \cong  &
      \bigoplus_{\substack{ J_1 \subset I 
               \\ \# J_1 = -p + 2(u-r)+1 } }
        j_{J_1,*} \F_{X_{J_1}}^{2r-u+p}   
        \\
      \to &
      \bigoplus_{\substack{ J_2 \subset I 
              \\ \# J_2 = -p + 2(u-r)+2 } }
        j_{J_2,*} \F_{X_{J_2}}^{2r-u+p}  
       \cong  \Gr^{W(M)}_{-p-1} A^{r,u+1-r}
    \end{align*}
     of  $(-1)^{-p+2(u-r)+1} \cdot \sgn ( o(J_1), o(J_2) )  \times $
       pull-back maps
     for $J_1 \subset J_2 $.

  The other morphism (\ref{eq d1 separetely})
    given by $(-1)^u d'' \colon I^{u,*} \to I^{u,*+1}$
   coincides with 
    the connecting homomorphism of an exact sequence 
   $$ 0 \to 
     \Gr^{W(M)}_{-p-1} A^{r,u-r} [-u] 
     \to 
      W(M)_{-p} A^{r,u-r} / W(M)_{-p-2} A^{r,u-r} [-u]
     \to 
      \Gr^{W(M)}_{-p} A^{r,u-r} [-u]
      \to 0 $$
   of shifts of sheaves (with the trivial differentials).
   This can be computed in the same way as spectral sequence of tropical fan in Subsection \ref{subsec log tropical cohomology}
    (\Cref{sign of log spectral sequence:label}). 
    Since 
  $$ W(M)_{-p} A^{r,u-r} 
  := \Im (W_{-p+2(u-r) +1}\F^{u+1}_{\X } (\log X_{\infty} ) |_{ X_{\infty}  }  \to A^{r,u-r}) $$
  and 
    $$(-p + 2(u-r) +1) + (u+1-u) =p +2 (u-r+1) ,$$
   $(-1)^u d'' \colon I^{u,*} \to I^{u,*+1}$ is given by  $  (-1)^{p} \cdot \sgn (o(J'), o(J)) \times $ Gysin maps, 
   where $J' \subset J \subset I$  are as below.

  Consequently, we get the following.
   (cf. \cite[Lemme 2.7]{GuillenNavarroSurlethormelocaldescyclesinvariantsAznar1990}, 
     \cite[Subsection 11.3.2]{PetersSteenbrinkMixedHodgestructures2008}).
  \begin{prp}\label{differential of monodromy-weight spectral sequence:label}
    For $r \geq 0$, the differential 
    \begin{align*}
    d_1 \colon 
    &
    E_1^{p,q} \cong 
    \bigoplus_{ \max \{0,p\} \leq u -r }
    \bigoplus_{\substack{ J \subset I \\ \# J = -p + 2(u-r)+1 } }
    H^{2r-u+p, p+q -u}_{\Trop} (X_{J})  
    \\
     & \to 
    \bigoplus_{ \max \{0,p+1\} \leq u'-r  }
    \bigoplus_{\substack{ J \subset I \\ \# J = -p + 2(u'-r) } }
    H^{2r-u'+p+1, p +q -u'+1}_{\Trop} (X_{J}) 
     \cong E_1^{p+1,q}
    \end{align*}
    of 
    the $E_1$-terms of the spectral sequence 
    in 
    Corollary \ref{spectral sequence for ss reduction} 
    is given as the sum of 
    \begin{itemize}
      \item the sum  
    \begin{align*}
      \bigoplus_{\substack{ J_1 \subset I 
                      \\ \# J_1 = -p + 2(u-r)+1 } }
     H^{2r-u+p, p+q -u}_{\Trop} (X_{J_1})  
      \to 
      \bigoplus_{\substack{ J_2 \subset I 
                       \\ \# J_2 = -p + 2(u-r) +2 } }
      H^{2r-u+p, p +q -u}_{\Trop} (X_{J_2}) 
    \end{align*}
    of 
    $ (-1)^{p+1} \cdot  \sgn ( o(J_1), o(J_2) ) \times $
    pull-back maps  
     ($J_1 \subset J_2$,  $u+1=u'$)
    and 
    \item the sum  
     $$
    \bigoplus_{\substack{ J \subset I \\ \# J = -p + 2(u-r)+1 } }
    H^{2r-u+p, p+q -u}_{\Trop} (X_{J})  
      \to 
    \bigoplus_{\substack{ J \subset I \\ \# J = -p + 2(u-r) } }
    H^{2r-u+p+1, p +q -u+1}_{\Trop} (X_{J'}) 
    $$
    of 
     $ (-1)^{p} \cdot  \sgn (o(J'), o(J))  \times $
      Gysin maps 
    ($J' \subset J  $, $u=u'$).
    \end{itemize}
  \end{prp}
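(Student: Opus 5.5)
The plan is to compute the connecting homomorphism (\ref{eq d1 of weight-monodromy spectral sequences}) through the double complex $I^{*,*}$ set up just before the statement. The differential of the total complex $s(I^{*,*})$ is $d' + (-1)^u d''$ on $I^{u,*}$, so $d_1$ is the sum of the two morphisms (\ref{eq d1 separetely}): one induced by $d'$ and one induced by $(-1)^u d''$. The key point is to use the functorial flabby resolution $A\mapsto \prod_x A_x$, which keeps the three-term sequence exact. Then the snake-lemma lift of a class in $\Gr^{W(M)}_{-p}$ to $W(M)_{-p}/W(M)_{-p-2}$ can be taken termwise, and the total differential of the lift splits as the sum of the two contributions. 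By the description of $W(M)_{-p}A^{r,u-r}/W(M)_{-p-2}$ in the three cases, the contributions land in different summands: $u'=u+1$ for $d'$ and $u'=u$ for $d''$.

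For the $d'$-part, I use the factorization of $-\wedge t$ through $\Gr^W_{-p+2(u-r)+1}\to\Gr^W_{-p+2(u-r)+2}$ shown above. Locally, by (\ref{log Fp local description}), $t$ restricted near a point of $X_{J_1}$ is $\sum_{l\in I_x} f_l$ modulo the tangent part. This holds because $\Psi(l)=\R_{\geq0}$ and the unimodular structure makes $t$ pair to $1$ with each primitive generator of $l\in I$. Applying $\Res_{o(J_2)}$ to $\alpha\wedge f_j$, where $J_2=J_1\cup\{j\}$, moves $f_j$ past the residues. I expect this to yield the pull-back map $X_{J_1}\supset X_{J_2}$ with sign $(-1)^{\#J_1}\sgn(o(J_1),o(J_2))=(-1)^{-p+1}\sgn(o(J_1),o(J_2))$. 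The shift $[-u]$ in the comparison of total complexes contributes no further sign, since $d'$ already carries the degree. The result is therefore $(-1)^{p+1}\sgn(o(J_1),o(J_2))$ times the pull-back, matching the first bullet.

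For the $d''$-part, $(-1)^ud''$ acting on the $u$-th column is the connecting homomorphism of the short exact sequence of sheaves $\Gr_{-p-1}\to W(M)_{-p}/W(M)_{-p-2}\to\Gr_{-p}$ in degree $u$, shifted by $[-u]$. Inside $A^{r,u-r}$, this is the sequence $\Gr^W_{k-1}\to W_k/W_{k-2}\to\Gr^W_k$ for the log sheaf $\F^{u+1}_{\X}(\log X_\infty)|_{X_\infty}$ with $k=-p+2(u-r)+1$. Here I apply \Cref{sign of log spectral sequence:label} to the tropical fan given locally by $\X$ near $X_\infty$ with divisor $X_\infty$. This is legitimate because the computation there is local and uses only the Amini--Piquerez complexes on the open sets $X\cap\Trop(T_\sigma)$, and $\X$ is regular at infinity in $\Trop(T_{\tilde\Lambda|_{N_\R\times\R}})$ with a unimodular fan. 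That corollary gives Gysin maps with sign $(-1)^{-k}\sgn(o(J'),o(J))$, with $-k$ in the role of its $p$. The shift $[-u]$ together with the factor $(-1)^u$ changes this sign by $(-1)^{u}$ twice, or equivalently by the parity identity $(-p+2(u-r)+1)+1\equiv p$ noted above. The resulting sign is $(-1)^p\sgn(o(J'),o(J))$, as in the second bullet.

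The main obstacle is sign bookkeeping rather than the identification of maps. Several signs have to be tracked consistently:
\begin{itemize}
\item the sign convention (\ref{eq sign of tensor product}) for the total complex and for shifts;
\item the $(-1)^{pq}$ in the cup product;
\item the order in which $\Res_{o(J)}$ strips the factors $f_{j}$;
\item the fact that $d'$ multiplies by $t$ on the right.
\end{itemize}
I would first fix everything in the rank-one local model $(\R\cup\{\infty\})\times W$, where Example~\ref{exam: connecting hom general} already pins down the sign. From there I would extend by the Künneth isomorphism (\ref{eq. Kunneth F level}), checking each sign against the corresponding exponent in the statement.
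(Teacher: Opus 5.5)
Your route is the paper's. You split the connecting homomorphism through the double complex $I^{*,*}$ into the part coming from $d'$ and the part coming from $(-1)^u d''$. You identify the first with $-\wedge t$ on graded pieces and reduce the second to \Cref{sign of log spectral sequence:label}. The pull-back half is correct. With $t=\sum_{l\in I_x}f_l$ modulo terms that do not raise the weight, one gets $\Res_{o(J_2)}(\alpha\wedge f_j)=(-1)^{\#J_1}\sgn(o(J_1),o(J_2))\Res_{o(J_1)}(\alpha)$ (restricted to $X_{J_2}$). Since $(-1)^{\#J_1}=(-1)^{-p+2(u-r)+1}=(-1)^{p+1}$, this is exactly the paper's sign.

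The Gysin half, however, is off by one sign as you argue it. The factor $(-1)^u$ on the column $I^{u,*}$ is not an extra sign on top of the shift $[-u]$: it \emph{is} the differential of $I^{u,*}[-u]$. So $(-1)^u d''$ computes the connecting homomorphism of $\Gr^{W(M)}_{-p-1}A^{r,u-r}[-u]\to \dots\to\Gr^{W(M)}_{-p}A^{r,u-r}[-u]$, and nothing more. If the two $(-1)^u$'s cancel as you say, what remains is the corollary's sign $(-1)^{-k}=(-1)^{p+1}$, where $k=-p+2(u-r)+1$. That is the opposite of the claimed $(-1)^p$. Your ``equivalent'' parity identity $k+1\equiv p$ is not equivalent to that reasoning, and its $+1$ is left unexplained.

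The missing ingredient is a degree mismatch. \Cref{sign of log spectral sequence:label} is normalized for $\Gr^W\F^m_X(\log D)[-m]$, with shift equal to the form degree; that is why its sign does not depend on $m$. Here the form degree is $u+1$, but the graded pieces sit in degree $u$. Under the convention $d_{A[m]}=(-1)^m d_A$, shifting a short exact sequence of sheaves by $[-m]$ multiplies its connecting homomorphism by $(-1)^m$. Passing from $[-(u+1)]$ to $[-u]$ therefore contributes exactly one factor $-1$, giving $(-1)^{-k}\cdot(-1)=(-1)^p$. This is what the paper's identity $(-p+2(u-r)+1)+(u+1-u)\equiv p \pmod 2$ encodes, and it is the step you need to replace your ``$(-1)^u$ twice'' argument with.
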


  \begin{rem}
   The difference of signs in \Cref{differential of monodromy-weight spectral sequence:label} 
   and those in   
    \cite[Lemme 2.7]{GuillenNavarroSurlethormelocaldescyclesinvariantsAznar1990} and  
     \cite[Subsection 11.3.2]{PetersSteenbrinkMixedHodgestructures2008})
     must be due to the difference of sign conventions.
     (See also \Cref{sec:sign_convention}.)
     We do not pursue this point here.
  \end{rem}

 \subsection{Eigenwaves}\label{sub:eigenwaves}
  In this subsection, we shall show that when $X$ is smooth (i.e., locally matroidal) and projective, 
  the eigenwave actions 
   are given by the \emph{reductions} of \emph{tropical Gauss-Manin connections} (\Cref{eigenwave is residue of GM connections:label}). 
  Proof is based on a theorem,  
  proved by 
   Mikhalkin-Zharkov \cite[Proof of Theorem 3]{MikhalkinZharkovTropicaleignewaveandintermediatejacobians2014} for realizable and smooth $X$
  and by Amini-Piquerez \cite[Corollary 6.21]{AminiPiquerezHodgetheoryfortropicalvarieties2020} for smooth $X$, 
  stating  
  that the eigenwave actions 
  are given by explicit morphisms $ \nu_{E_1}  $ on the spectral sequence in \Cref{spectral sequence for ss reduction}.
  We shall use the notation in the previous subsection. 
  For a while, we do not assume that $X$ is smooth nor projective.

  Our construction of tropical Gauss-Manin connections 
   uses connecting homomorphisms of relative tropical homolorphic forms,
   and is an analog of a construction of Katz-Oda (\cite{KatzOdaOnthedifferentiationofDeRhamcohomologyclasseswithrespecttoparameters1968}).
  Recall that 
   $\Psi \colon \Trop (T_{\tilde{\Lambda} |_{ N_\R \times \R }})\to \R \cup \{ \infty \} $
   is 
   the natural extension of the projection 
   $ N_\R \times \R \to \R$.
  Recall that 
   by \Cref{Fp log computation}, 
     there is an exact sequenece 
    \begin{align*}
    0 \to \F^1_{\R \cup \{ \infty  \}} (\log \{ \infty  \}) 
     \wedge \F^{r-1}_\X (\log X_{\infty}) |_{ \X^{\circ} }
       \to &  \F^r_\X (\log X_{\infty}) |_{ \X^{\circ} } \\
       \to & \F^1_{\R \cup \{ \infty  \}} (\log \{ \infty  \}) 
     \wedge \F^{r}_\X (\log X_{\infty}) |_{ \X^{\circ} }
       \to 0 . 
    \end{align*}
  We have put 
   $$\F^r_{\X / \R \cup \{ \infty  \} } (\log X_{\infty} ) 
   :=
    \F^r_\X (\log X_{\infty}) / 
    \F^1_{\R \cup \{ \infty  \}} (\log \{ \infty  \}) 
    \wedge \F^{r-1}_\X (\log X_{\infty}).$$
  We immediately get an exact sequence 
    \begin{align*}
    0 \to & \Psi^* \F^1_{\R \cup \{ \infty  \}} (\log \{ \infty  \}) 
     \otimes 
      \F^{r-1}_{\X / \R \cup \{ \infty  \} } (\log X_{\infty} )  [-r]
      |_{ \X^{\circ} } \\
       \to &  \F^r_\X (\log X_{\infty}) [-r] |_{ \X^{\circ} } 
       \to  
      \F^{r}_{\X / \R \cup \{ \infty  \} } (\log X_{\infty} ) 
      [-r]
       |_{ \X^{\circ} }
       \to 0 . 
    \end{align*}
   By pushing forward this exact sequence 
   under $\Psi$
   and  
    identifing 
    $ \F^1_{\R \cup \{ \infty  \}} (\log \{ \infty  \}) |_{\R_{>0 } \cup \{ \infty \}}$
    with the constant sheaf $\underline{\Q}_{ \R_{>0 } \cup \{ \infty \} }$
    by $t \mapsto 1$,
    we get 
  the connecting homomorhism 
  $$ \res (\nabla) 
     \colon 
     R^q\Psi_*  (
      \F^{r}_{\X / \R \cup \{ \infty  \} } (\log X_{\infty} ) 
      [-r]
      |_{ \X^{\circ} })
      \to 
     R^{q+1}\Psi_* (\F^{r-1}_{\X / \R \cup \{ \infty  \} } (\log X_{\infty} ) 
     [-r]
       |_{ \X^{\circ} } ) .$$
   We call this the \emph{residue} of the \emph{tropical Gauss-Manin connection}
        (cf. \cite{KatzOdaOnthedifferentiationofDeRhamcohomologyclasseswithrespecttoparameters1968}). 
        Recall that 
    there are an isomorphism 
   $$\F^{r}_{\X / \R \cup \{ \infty  \} } (\log X_{\infty} ) 
       |_{ X \times \{1\} } \cong 
      \F^{r}_{X } $$
   and isomorphisms 
  \begin{align*}
     H^{r,s}_{\Trop } (X )
     &
     \cong  H^{s+r} ( \X^{\circ} , 
    \F^r_{\X / \R \cup \{ \infty  \} } (\log X_{\infty} ) [ -r ] 
     |_{ \X^{\circ} })  \\ 
     &
   \cong H^{s+r} ( X_{\infty} ,  
   \F^r_{\X / \R \cup \{ \infty  \} } (\log X_{\infty} ) [-r] |_{ X_{\infty}  })
  \end{align*}
   (Subsection \ref{sub:monodromy_weight_spectral_sequences}).
        Thus (using any of the above three cohomology) we get a morphism 
        \begin{align*}
      \res (\nabla)
     \colon 
     H^{r,s}_{\Trop } (X ) \to 
     H^{r-1,s+1}_{\Trop } (X )
        \end{align*}
        of tropical cohomology.
  We shall show that 
   this morphism coincides with the
   eigenwave action (up to sign) when $X$ is smooth. 

   Following Steenbrink \cite[(4.22)]{SteenbrinkLimitsofHodgestructures1976}, 
   \cite[Theorem 11.21]{PetersSteenbrinkMixedHodgestructures2008}, 
   we shall give a description of $\res (\nabla)$
   in terms of the resolution $A^{r,*-r}$.
  We put 
   $$\xymatrix{
     A^{r,q-r} \ar@{=}[d] \ar[r]^-{\nu} & 
        A^{r-1,q-r+1} \ar@{=}[d]  \\ 
     \F^{q+1}_{\X} (\log X_{\infty} )/ W_{q-r} \F^{q+1}_{\X} (\log X_{\infty} ) |_{X_{\infty}}
    \ar@{->>}[r]  &
     \F^{q+1}_{\X} (\log X_{\infty} )/ W_{q-r+1} \F^{q+1}_{\X} (\log X_{\infty} ) |_{X_{\infty}}
   }$$
   the projection. 
  This morphism induces a morphism 
  $\nu \colon A^{r,*-r} \to A^{r-1, *-r+1}$
   of complexes, 
   and hence a morphism of cohomology groups 
   $$\nu \colon H^{r,s}_{\Trop }(X) \cong 
          H^{s+r} (X_0, A^{r,*-r}) 
      \to  H^{s+r} (X_0, A^{r-1,*-r+1}) 
      \cong H^{r-1,s+1}_{\Trop }(X).$$

  The following is an analog of
    \cite[(4.22)]{SteenbrinkLimitsofHodgestructures1976}, 
   \cite[Theorem 11.21]{PetersSteenbrinkMixedHodgestructures2008}.  
  \begin{prp}
   We have a commutative diagram 
   $$\xymatrix{
     H^{s+r} ( X_{\infty} ,  
   \F^r_{\X / \R \cup \{ \infty  \} } (\log X_{\infty} ) [-r] |_{ X_{\infty}  })
   \ar[d]^-{\cong}
      \ar[r]^-{\res (\nabla) }  & 
      H^{s+r} ( X_{\infty}   ,   
   \F^{r-1}_{\X / \R \cup \{ \infty  \} } (\log X_{\infty} ) [-r+1] |_{ X_{\infty}  }) 
     \ar[d]^-{\cong}
     \\
          H^{s+r} (X_0, A^{r,*-r})   \ar[r]^-{\nu}  & 
       H^{s+r} (X_0, A^{r-1,*-r+1}) .
   }$$
  \end{prp}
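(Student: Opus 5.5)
The plan is to follow Steenbrink's argument: realize the Gauss--Manin short exact sequence at the level of the resolutions $A^{\bullet,*}$ and identify the connecting homomorphism with $\nu$. Restrict the exact sequence of Lemma \ref{Fp log computation} (in the tensor form above, with $\Psi^*\F^1_{\R \cup \{\infty\}}(\log\{\infty\})$ trivialized by $t \mapsto 1$) to $X_{\infty}$, using the retraction $\X^{\circ} \to X_{\infty}$ as in the proof of Proposition \ref{ss reduction trop coh = log coh}. This gives on $X_\infty$ a short exact sequence
$$0 \to \F^{r-1}_{\X / \R \cup \{ \infty  \} } (\log X_{\infty} )[-r]|_{X_\infty} \xrightarrow{t\wedge -} \F^r_\X (\log X_{\infty})[-r]|_{X_\infty} \to \F^{r}_{\X / \R \cup \{ \infty  \} } (\log X_{\infty} )[-r]|_{X_\infty} \to 0,$$
whose connecting homomorphism on $H^{s+r}(X_\infty,-)$ is $\res(\nabla)$. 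Indeed, $R\Psi_*$ restricted near $\infty$ computes cohomology over a neighborhood of $X_\infty$, and this is equal to cohomology on $X_\infty$ by proper base change and the retraction.

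Next I will build a short exact sequence of complexes lifting this one:
$$0 \to A^{r-1,*-r+1}[-1] \to B^{r,*} \to A^{r,*-r} \to 0,$$
with $B^{r,q} := \F^{q+1}_\X(\log X_\infty)/W_{q-r}$ for $q \geq r$, plus $\F^{r}_\X(\log X_\infty)|_{X_\infty}$ placed in degree $r$. The differentials are $-\wedge t$, with appropriate signs. The plan is to choose $B^{r,*}$ as the natural ``unquotiented'' version: $B^{r,q}=\F^{q}_\X(\log X_\infty)/W_{q-r-1}\F^q_\X(\log X_\infty)|_{X_\infty}$ for $q\ge r$, with differential $\wedge t$. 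There is a natural surjection $B^{r,q}\to A^{r,q-r}$ given by $\alpha\mapsto \alpha\wedge t$, and its kernel should be identified, via (\ref{log Fp local description}), with a shifted copy of $A^{r-1,*}$. I will check exactness stalkwise using (\ref{log Fp local description}) together with the residue isomorphisms for $\Gr^W$. There is also a natural map from the Gauss--Manin sequence into this sequence, given by the quasi-isomorphisms $\alpha\mapsto\alpha\wedge t$ used to define $A^{r,*-r}$ and its analogue for $B$. Applying the five lemma to this map of short exact sequences shows that the middle map is also a quasi-isomorphism. The two connecting homomorphisms then agree.

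Finally, I will compute the connecting homomorphism of the $A$-level sequence by a direct diagram chase. Take a cocycle in $A^{r,q-r}$ and lift it to $B$. Since $\wedge t$ is the differential and the lift is essentially $\alpha$ itself read modulo a smaller weight, the boundary of the lift lands in the kernel. There it is identified with the image of $\alpha$ in $\F^{q+1}/W_{q-r+1}$, which is exactly $\nu(\alpha)$. The main obstacle is getting the exact bookkeeping of weight indices right, namely that the kernel of $B\to A$ is precisely $A^{r-1,*-r+1}[-1]$ with matching indices $W_{q-r}$ versus $W_{q-r+1}$. A second difficulty is the signs coming from the shift and the convention of the appendix; the result may only hold up to a global sign $(-1)^{r}$ or similar, which I would track and, if necessary, absorb into the identification $t\mapsto 1$.
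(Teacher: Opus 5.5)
\paragraph{The gap is in the middle complex.} Your first step, restricting the Gauss--Manin sequence to $X_\infty$ and reading $\res(\nabla)$ as its connecting map, matches the paper. So does your overall plan of mapping that sequence to a short exact sequence of resolutions whose connecting map is $\nu$. The middle complex $B^{r,*}$ is where it breaks. Take the version you commit to: $B^{r,q}=\F^q_\X(\log X_\infty)/W_{q-r-1}$, with differential $\wedge t$ and $\phi=\wedge t\colon B^{r,*}\to A^{r,*-r}$.
\begin{itemize}
\item Since $d_A$ is also $\wedge t$, you have $d_A\circ\phi=0$ identically. So $\phi$ lands in the cocycles of $A^{r,*-r}$ and cannot be surjective wherever $d_A\neq 0$. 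This happens as soon as two components meet: at a point lying on exactly $X_{l_1}$ and $X_{l_2}$, $d_A(f\wedge f_{l_1})\equiv f\wedge f_{l_1}\wedge f_{l_2}$ modulo $W_1$.
\item Exactness also fails in degree $r+1$ near a point lying on a single component $X_l$. There $B^{r,r+1}=\F^{r+1}_\X(\log X_\infty)/W_0\cong\Gr^W_1\F^{r+1}_\X(\log X_\infty)\cong\F^r_{X_l}$ is nonzero in general, while $A^{r,1}=\F^{r+2}/W_1$ and $A^{r-1,1}=\F^{r+1}/W_1$ both vanish.
\item There is no natural injection $A^{r-1,q-r}=\F^q/W_{q-r}\to\F^q/W_{q-r-1}=B^{r,q}$, since the former is a quotient of the latter.
\end{itemize}
Conceptually, $B^{r,*}$ is the complex $\F^r_\X(\log X_\infty)|_{X_\infty}\to A^{r,0}\to A^{r,1}\to\cdots$. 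By the resolution property of $A^{r,*-r}$, its only cohomology is in degree $r$, equal to $t\wedge\F^{r-1}_\X(\log X_\infty)|_{X_\infty}\cong\F^{r-1}_{\X/\R\cup\{\infty\}}(\log X_\infty)|_{X_\infty}$. So it resolves the \emph{sub}-term of the Gauss--Manin sequence, not the middle one, and neither the five-lemma step nor the final chase can work. Your first description also fails: $A^{r,q-r}$ together with $\F^r$ in degree $r$ lacks the $A^{r-1,q-r}$ summands in degrees $q>r$.

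\paragraph{The missing idea is the mapping cone.} Take the middle term to be $\cone(\nu)[-1]$, whose degree-$q$ term is $A^{r,q-r}\oplus A^{r-1,q-r}=\F^{q+1}/W_{q-r}\oplus\F^{q}/W_{q-r}$.
\begin{itemize}
\item The sequence $0\to A^{r-1,*-r+1}[-1]\to\cone(\nu)[-1]\to A^{r,*-r}\to 0$ is termwise split exact, and its connecting homomorphism is $\nu$ by construction. None of the weight-index bookkeeping you worried about is needed for this part.
\item The comparison map on the middle term is $\alpha\mapsto(-\alpha\wedge t,\alpha)$. It is a chain map because $\nu(-\alpha\wedge t)$ cancels the differential $\alpha\wedge t$ of the second component in $A^{r-1,1}$.
\item Composed with the projection $(a,b)\mapsto -a$, it gives $\alpha\mapsto\alpha\wedge t$, which matches the quasi-isomorphism on the quotient term.
\end{itemize}
With $\alpha\mapsto\alpha\wedge t$ on the sub-term, this is a morphism of short exact sequences, up to the sign bookkeeping you already anticipated. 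Hence the two connecting homomorphisms agree. This is exactly the paper's proof. Your heuristic that the lift is ``$\alpha$ read modulo a smaller weight'' is precisely what the second component of the cone encodes; it is incompatible with your map $\phi=\wedge t$.
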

  \begin{proof}
   We have exact sequences 
    \begin{align*}
    0 \to & 
      \F^{r-1}_{\X / \R \cup \{ \infty  \} } (\log X_{\infty} )  [-r+1][-1]
      |_{ X_{\infty} } \\
       \to &  \F^r_\X (\log X_{\infty}) [-r] |_{ X_{\infty} } 
       \to  
      \F^{r}_{\X / \R \cup \{ \infty  \} } (\log X_{\infty} ) 
      [-r]
       |_{ X_{\infty} }
       \to 0  
    \end{align*}
    and 
    $$ 0 \to A^{r-1,*-r+1}[-1]
         \to  \cone (\nu)[-1]
         \to A^{r,*-r}
    \to 0 $$
    (see \Cref{sec:sign_convention} for our sign convention of mapping cones), 
    which are 
    compatible with 
    quasi-isomorphisms
  $$\F^u_{\X / \R \cup \{ \infty  \} } (\log X_{\infty} ) [-u] |_{ X_{\infty}  } 
  \ni \alpha 
  \mapsto  \alpha \wedge t
   \in  A^{u,*-u}$$
   ($u=r-1,r$)
   and 
   $$ \F^r_\X (\log X_{\infty}) [-r] |_{ X_{\infty} }  
      \ni \alpha 
      \mapsto (-\alpha \wedge t, \alpha)
     \in   \cone (\nu)[-1].
   $$
   The morphism $\res (\nabla) $ is the connecting homomorphism of the first exact sequence, which is compatible with that of the second one.
   Hence the assertion holds.
  \end{proof}

  The morphism
  $\nu $ induces a morphism 
  $ \nu_{E_2} \colon E_{2, A^{r,*-r}}^{p,q}  
  \to 
   E_{2, A^{r-1,*-r+1}}^{p+2,q-2}  
  $ of $E_2$-terms. This can be seen in the following way.
  The morphism $\nu$ maps $W(M)_{-p} A^{r,*-r}$ to $W(M)_{-p-2} A^{r-1,*-r+1} $, 
   and 
    hence it induces a morphism
  $$ \nu_{E_1} \colon E_{1, A^{r,*-r}}^{p,q} = 
          H^{p+q} (X_0, \Gr^{W(M)}_{-p} A^{r,*-r}  ) 
  \to 
   E_{1, A^{r-1,*-r+1}}^{p+2,q-2} = 
         H^{p+q} (X_0, \Gr^{W(M)}_{-p-2} A^{r-1,*-r+1}  ) 
  $$
    of $E_1$-terms
    of the spectral sequences.
  More presicely, it is induced by 
  the sum
  \begin{align*}
   \nu \colon 
   \Gr^{W(M)}_{-p} A^{r,*-r} 
   \cong &  
    \bigoplus_{ \max \{0, p \} \leq u-r  }
    \bigoplus_{\substack{ J \subset I \\ \# J = -p + 2(u-r)+1 } }
     j_{J,*} \F_{X_{J}}^{2r-u+p}  [-u]  \\
      \to  & 
    \bigoplus_{ \max \{0, p +2 \} \leq u' -r+1 }
    \bigoplus_{\substack{ J \subset I \\ \# J = -p + 2(u'-r)+1 } }
     j_{J,*} \F_{X_{J}}^{2r-u'+p}  [-u'] \\
   \cong &  
   \Gr^{W(M)}_{-p-2} A^{r-1,*-r+1}  
  \end{align*} 
  of the identitiy map 
  of $j_{J,*} \F_{X_{J}}^{2r-u+p}  [-u]$
  for $\max \{0,p+1 \} \leq u -r$, $u=u'$ and $J \subset I $ with $ \# J = -p + 2u+1 $
  (where the order of $J\subset I$
    used to give the residue maps are the same).
  This is obviously compatible with $d_1$, and hence induces a morphism 
  $ \nu_{E_2} \colon E_{2, A^{r,*-r}}^{p,q}  
  \to 
   E_{2, A^{r-1,*-r+1}}^{p+2,q-2} . 
  $

  We assume that $X$ is smooth, i.e., locally matroidal and $T_{\Sigma}$ is projective.
   Recall that 
   $X_J$ ($J \subset I$) is the natural compactification of the star fan of a polyhedron in
   $N$-unimodular $\Q$-rational 
    polyhedral complex strucuture $\Lambda$ of $X$.
     Hence $X_J$ is also smooth, 
   and by \cite[Theorem 1.1]{AminiPiquerezTropicalFeichtner-Yuzvinskyandpositivitycriterionforfans2024} and \cite[Theorem 2]{JellShawSmackaSuperformstropicalcohomologyandPoincarduality2019}, 
    we have $H^{p,q}_{\Trop} (X_J) =0$ for $p \neq q$.
   Hence $E_{1, A^{r,*-r}}^{p,q} \neq 0 $ only if $q=2r$, 
   and 
   we have 
   $E_{2, A^{r,*-r}}^{p,2r} \cong H^{r,p+r}_{\Trop }(X)$. 
   Then 
   the morphism
   $\nu  \colon H^{r,p+r}_{\Trop }(X)
     \to  H^{r-1,p+r+1}_{\Trop }(X)$ 
   coincides with the morphism $ \nu_{E_2}  $ on $E_2$-terms.
   Mikhalkin-Zharkov \cite[Proof of Theorem 3]{MikhalkinZharkovTropicaleignewaveandintermediatejacobians2014} (for realizable $X$)
  and Amini-Piquerez \cite[Corollary 6.21]{AminiPiquerezHodgetheoryfortropicalvarieties2020}(in general)
  proved that the eigenwave action coincides with 
   $ \nu_{E_2} $ (up to sign).

   Consequently, we have the following.
  \begin{thm}\label{eigenwave is residue of GM connections:label}
   We assume that $X$ is smooth and $T_{\Sigma}$ is projective.
   Then the residue 
   $$ \res (\nabla)  
      \colon 
     H^{r,s}_{\Trop} ( X)
      \to   
      H^{r-1,s+1}_{\Trop} ( X) 
   $$
    of tropical Gauss-manin connection
   coincides with the eigenwave action, up to sign.
  \end{thm}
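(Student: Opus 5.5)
The proof is a chain of identifications, most of which are already in place before the statement.

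First, I use the commutative square of the preceding proposition (the analog of Steenbrink's (4.22)). Under the quasi-isomorphisms $\alpha \mapsto \alpha \wedge t$ from $\F^u_{\X / \R \cup \{ \infty \} } (\log X_{\infty} )[-u]|_{X_\infty}$ to $A^{u,*-u}$ (for $u=r,r-1$), the residue $\res(\nabla)$ on $H^{r,s}_{\Trop}(X)$ corresponds to the map $\nu$ induced by the projection $A^{r,*-r}\to A^{r-1,*-r+1}$. I also need these identifications to be compatible with the three descriptions of tropical cohomology: on $X\times\{1\}$, on $\X^{\circ}$, and on $X_\infty$. This holds because the restriction to $X\times\{1\}$ and the retraction $\X^{\circ}\to X_\infty$ are maps of sheaves commuting with the connecting homomorphisms, since the short exact sequences are restrictions of a single sequence on $\X^{\circ}$. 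It therefore suffices to identify $\nu$ on cohomology with the dual eigenwave action, up to sign.

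Second, I pass to the monodromy-weight spectral sequences of \Cref{spectral sequence for ss reduction}. Since $\nu$ sends $W(M)_{-p}A^{r,*-r}$ into $W(M)_{-p-2}A^{r-1,*-r+1}$, it is a morphism of filtered complexes and induces $\nu_{E_1}$ and $\nu_{E_2}$. It is also compatible with the abutments. Now $X$ is smooth, so each $X_J$ is the compactification of a star fan of a unimodular polyhedron and is smooth. Hence $H^{a,b}_{\Trop}(X_J)=0$ for $a\neq b$ by the cited results of Amini--Piquerez and Jell--Shaw--Smacka. Looking at the $E_1$-terms, this forces $E_1^{p,q}=0$ unless $q=2r$. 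Consequently $E_2^{p,2r}=E_\infty^{p,2r}\cong H^{r,p+r}_{\Trop}(X)$, and the filtration on the abutment has a single nonzero graded piece. So $\nu$ on $H^{r,p+r}_{\Trop}(X)$ equals $\nu_{E_2}\colon E_2^{p,2r}\to E_2^{p+2,2r-2}$. The indices match, since $E_2^{p+2,2(r-1)}$ of the second sequence computes $H^{r-1,p+r+1}_{\Trop}(X)$.

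Third, I invoke Mikhalkin--Zharkov (realizable case) and Amini--Piquerez \cite[Corollary 6.21]{AminiPiquerezHodgetheoryfortropicalvarieties2020} (general smooth case). They show that the eigenwave action equals, up to sign, the map on their spectral sequence given by identity maps between the summands $H^{*,*}_{\Trop}(X_J)$ shifted by two in $p$. This is the main obstacle: their spectral sequence is built from explicit double complexes, not from $A^{r,*-r}$. I would match the two $E_1$-pages summand by summand, using that both are indexed by the same strata $X_J$ and that both $d_1$'s are signed sums of restriction and Gysin maps (\Cref{differential of monodromy-weight spectral sequence:label}). I would then check that their map is the identity on matching summands, which is exactly our description of $\nu_{E_1}$. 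Signs can only differ by a global $\pm1$ on each component, which the ``up to sign'' in the statement absorbs. To make the comparison canonical, I would first try to show that both $E_2$-pages are identified with $H_{\Trop}(X)$ compatibly with the Deligne-type resolution of \Cref{weight spectral sequence for smooth open varieties}. If that fails, I would argue directly at $E_1$, where both maps are visibly identities.
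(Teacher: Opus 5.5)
Your proposal is essentially the paper's proof. It uses the Steenbrink-type commutative square identifying $\res(\nabla)$ with $\nu$, then degeneration of the monodromy-weight spectral sequence at $E_2$ (from $H^{a,b}_{\Trop}(X_J)=0$ for $a\neq b$) to get $\nu=\nu_{E_2}$, and finally the results of Mikhalkin--Zharkov and Amini--Piquerez identifying the eigenwave action with $\nu_{E_2}$ up to sign. The paper applies that citation directly to its own spectral sequence, without the summand-by-summand comparison you sketch. If you do carry out that comparison, matching the $E_1$-pages and $\nu_{E_1}$ alone is not enough: you also need the two identifications of $E_2$ with $H_{\Trop}(X)$ to agree up to a single global sign, not merely componentwise.
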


\appendix
\section{The sign convention} \label{sec:sign_convention}
   Signs in homological algebra are very delicate, see e.g.,  
   \cite{ConradGrothendieckDualityandBaseChange2000},
    \cite{LawsonInwhichItrytogetthesignsrightforonce2013}. 
   In this appendix, 
   we clarify our sign convention.
   For homological algebra, see \cite{WeibelAnintroductiontohomologicalalgebra1984}. 
   
  Let $\mathscr{A}$ be the category of abelian sheaves on a topological space,
   and $C(\mathscr{A})$ the category of complexes in $\mathscr{A}$.
  Let $(A,d_A)$ and $(B,d_B)$ be complexes in $C(\mathscr{A})$.
  For  $r \in \Z$,
   we put $A[r]$ the complex defined by
   $A[r]^p:= A^{p+r}$ with $d_{A[r]} := (-1)^{r} d_A$.
  We have a canonical identification $H^{p+r}(A) = H^{p}(A[r])$ 
   given by the identity map of $A^{p+r}$.
  For a morphism $f \colon A \to B$ of complexes, 
   we put  $f[r] \colon A[r] \to B[r]$ 
   the morphism given by $f[r]^p := f^{p+r}$.

  We put $A \otimes B$ the complex 
   such that 
   $$ (A \otimes B)^p := \bigoplus_{i+j =p} A^i \otimes B^j$$
   and 
   \begin{align}\label{eq sign of tensor product}
    d_{A \otimes B} |_{ A^i \otimes B^j }
      := d_A \otimes \id_B + \id_A \otimes (-1)^i d_B   .
   \end{align}
   We have a natural isomorphism 
   $$ (A[r]) \otimes (B[s]) \cong (A \otimes B) [r+s]$$
   given by 
   $(-1)^{(i-r) s}$ on $A^i \otimes B^j$ (\cite[p.11 and p.15]{ConradGrothendieckDualityandBaseChange2000}).
   We have an isomorphism 
     $$ A \otimes B \cong B \otimes A$$
     given by 
     $ a \otimes b \mapsto (-1)^{pq} b \otimes a$
     ($a \in A^p $, $b \in B^q$).

   A double complex $B^{p,q}$
    is equipped with morphisms
     $d' \colon B^{p,q} \to B^{p+1, q}$
     and 
     $d'' \colon B^{p,q} \to B^{p, q+1}$
     such that 
     $d' \circ d' = 0$,
     $d'' \circ d'' = 0$, and 
     $d' \circ d'' = d'' \circ d' $.
  The associated single complex $s(B^{ *,* })$
  is defined by 
    $ s(B^{*,*})^r := \bigoplus_{ p+q = r } B^{p,q}$
    and the sum of 
     $d' \colon B^{p,q} \to B^{p+1, q}$
     and 
     $(-1)^p \cdot d'' \colon B^{p,q} \to B^{p, q+1}$.

  Our signs of mapping cones and cylinders 
    are the same as \cite{ConradGrothendieckDualityandBaseChange2000}, \cite{LawsonInwhichItrytogetthesignsrightforonce2013}, 
    which is different from \cite{WeibelAnintroductiontohomologicalalgebra1984}. 
  For a morphism $f \colon A \to B$ of complexes, 
   the mapping cone is
   $\cone (f)^n:= A^{n+1}\oplus B^n$ 
   with 
   $$d(a,b):= (-d_A (a), d_B (b)+ f(a)).$$
   There are natural morphisms 
   \begin{align*}
    B \ni & b \mapsto (0,b) \in \cone (f) ,  \\
    \cone ( f ) \ni & (a,b) \mapsto - a \in A[1] .
   \end{align*}
   The mapping cylinder is 
   defined as
   $\cyl (f)^n := 
     A^n \oplus A^{n+1} \oplus B^n $
    with $$d(a,a',b):= 
          (d_a(a)-a', 
             -d_A (a'), d_B (b) + f (a') ).$$
   There are natural morphisms 
   \begin{align*}
    A \ni & a \mapsto (a,0,0) \in \cyl (f) , \\
    \cyl ( f ) \ni & (a,a', b) \to (a',b) \in \cone (f) .
   \end{align*}

  By this sign convention of mapping cylinders and mapping cones, 
  for 
    a short exact sequence 
  $$ 0 \to A \to B \to C \to 0$$
    in $C(\mathscr{A})$, 
   connecting homomorphisms $ H^i(C) \to H^{i+1}(A)$
   of the snake lemma 
   and morphisms 
   $ H^i(C) \to H^{i+1}(A)$
    (\cite[Section 1.5 and Definition 10.1.3]{WeibelAnintroductiontohomologicalalgebra1984})
   of cohomology groups of 
   the distinguished triangle
   $$A \to B \to C \to^{[1]}$$ 
   in the derived category $D(\mathscr{A})$ 
   is the same 
   (see  \cite[Section 1.3]{ConradGrothendieckDualityandBaseChange2000}).
   (If we use 
   the sign convention of 
   \cite{WeibelAnintroductiontohomologicalalgebra1984}, 
   two morphisms differ by $(-1)$ 
   (see \cite[Exercise 1.5.6]{WeibelAnintroductiontohomologicalalgebra1984}).)

\bibliographystyle{amsalpha}
\bibliography{mikamibibtex}

@misc{MikamiDifferentialformsandcohomologyintropicalandcomplexgeometry2021,
	author = {R. Mikami},
	howpublished = {arXiv:2106.11479},
	title = {Differential forms and cohomology in tropical and complex geometry},
	year = {2021}}

@article{AksnesAminiPiquerezShawCOHOMOLOGICALLYTROPICALVARIETIES2025,
	author = {E. Aksnes and O. Amini and M. Piquerez and K. Shaw},
	journal = {J. Inst. Math. Jussieu},
	number = {6},
	pages = {2543-2572},
	title = {COHOMOLOGICALLY TROPICAL VARIETIES},
	volume = {24},
	year = {2025}}

@article{KatzTHEREGULARITYTHEOREMINALGEBRAICGEOMETRY1970,
	author = {N. M. Katz},
	journal = {Actes. Congr\`{e}s intern. math.},
	pages = {437-443},
	title = {THE REGULARITY THEOREM IN ALGEBRAIC GEOMETRY},
	year = {1970}}

@incollection{JellTROPICALCOHOMOLOGYWITHINTEGRALCOEFFICIENTSFORANALYTICSPACES2022,
	author = {P. Jell},
	booktitle = {Facets of Algebraic Geometry: A Collection in Honor of {W}illiam {F}ulton's 80th Birthday},
	publisher = {Cambridge University Press},
	series = {London Mathematical Society Lecture Note Series},
	title = {TROPICAL COHOMOLOGY WITH INTEGRAL COEFFICIENTS FOR ANALYTIC SPACES},
	year = {2022}}

@article{LiuMonodromymapfortropicalDolbeaultcohomology2019,
	author = {Y. Liu},
	journal = {Algebr. Geom.},
	number = {4},
	pages = {384-409},
	title = {Monodromy map for tropical {D}olbeault cohomology},
	volume = {6},
	year = {2019}}

@article{DeligneThoriedeHodgeII1972,
	author = {P. Deligne},
	journal = {Publ. Math. IHES},
	pages = {5-57},
	title = {Th{{\'e}}orie de {H}odge {II}},
	volume = {40},
	year = {1972}}

@article{KatzOdaOnthedifferentiationofDeRhamcohomologyclasseswithrespecttoparameters1968,
	author = {N. M. Katz and T. Oda},
	journal = {J. Math. Kyoto Univ.},
	number = {2},
	pages = {199-213},
	title = {On the differentiation of {D}e {R}ham cohomology classes with respect to parameters},
	volume = {8},
	year = {1968}}

@misc{YamamotoTROPICALCONTRACTIONSTOINTEGRALAFFINEMANIFOLDSWITHSINGULARITIES2021,
	author = {Y. Yamamoto},
	howpublished = {arXiv21051014},
	title = {TROPICAL CONTRACTIONS TO INTEGRAL AFFINE MANIFOLDS WITH SINGULARITIES},
	year = {2021}}

@article{ZharkovTorusfibrationsofCalabi-Yauhypersurfacesintoricvarieties2000,
	author = {I. Zharkov},
	journal = {Duke Math. J.},
	number = {2},
	pages = {237-257},
	title = {Torus fibrations of {C}alabi-{Y}au hypersurfaces in toric varieties},
	volume = {101},
	year = {2000}}

@article{GrossSpecialLagrangianfibrations.I.Topology1998,
	author = {M. Gross},
	journal = {Integrable Systems and Algebraic Geometry (Kobe/Kyoto, 1997) (World Scientific Publishing, River Edge)},
	pages = {156-193},
	title = {Special {L}agrangian fibrations. {I}. Topology},
	year = {1998}}

@article{SteenbrinkLimitsofHodgestructures1976,
	author = {J. Steenbrink},
	journal = {Invent. Math.},
	pages = {229-257},
	title = {Limits of {H}odge structures},
	volume = {31},
	year = {1976}}

@book{KempfKnudsenMumfordSaint-DonatToroidalEmbeddings11973,
	author = {G. Kempf and F. Knudsen and D. Mumford and B. Saint-Donat},
	publisher = {Springer Berlin, Heidelberg},
	series = {Lecture Notes in Mathematics},
	title = {Toroidal Embeddings 1},
	year = {1973}}

@misc{SmackaPhD2017,
	author = {J. Smacka},
	howpublished = {Ph.D. Thesis 2017, Available at https:// epub .uni -regensburg .de /36262/},
	title = {Differential forms on tropical spaces},
	year = {2017}}

@misc{MikamiTropicalIntersectionHomology2024,
	author = {R. Mikami},
	howpublished = {arXiv:2412.20748},
	title = {Tropical intersection homology},
	year = {2024}}

@book{ConradGrothendieckDualityandBaseChange2000,
	author = {B. Conrad},
	publisher = {Springer Berlin, Heidelberg},
	series = {Lecture Notes in Mathematics},
	title = {{G}rothendieck Duality and Base Change},
	year = {2000}}

@misc{LawsonInwhichItrytogetthesignsrightforonce2013,
	author = {T. Lawson},
	howpublished = {available at https://www-users.cse.umn.edu/~tlawson/papers/signs.pdf},
	title = {In which {I} try to get the signs right for once},
	year = {2013}}

@article{GuillenNavarroSurlethormelocaldescyclesinvariantsAznar1990,
	author = {F. Guill{\'e}n and V. Navarro-Aznar},
	journal = {Duke Math. J.},
	number = {1},
	pages = {133-155},
	title = {Sur le th{\'e}or{\`e}me local des cycles invariants},
	volume = {61},
	year = {1990}}

@misc{AminiPiquerezHodgetheoryfortropicalvarieties2020,
	author = {O. Amini and M. Piquerez},
	howpublished = {arXiv:2007.07826},
	title = {Hodge theory for tropical varieties},
	year = {2020}}

@book{PetersSteenbrinkMixedHodgestructures2008,
	author = {C.A.M. Peters and J.H.M. Steenbrink},
	publisher = {Springer-Verlag, Berlin},
	series = {Ergebnisse der Mathematik und ihrer Grenzgebiete. 3. Folge. A Series of Modern Surveys in Mathematics [Results in Mathematics and Related Areas. 3rd Series. A Series of Modern Surveys in Mathematics]},
	title = {Mixed {H}odge structures},
	volume = {52},
	year = {2008}}

@article{GrossSiebertMirrorsymmetryvialogarithmicdegenerationdataII2010,
	author = {M. Gross and B. Siebert},
	journal = {J. Algebraic Geom.},
	number = {4},
	pages = {679-780},
	title = {Mirror symmetry via logarithmic degeneration data, {II}},
	volume = {19},
	year = {2010}}

@article{RabinoffTropicalanalyticgeometryNewtonpolygonsandtropicalintersections2012,
	author = {J. Rabinoff},
	journal = {Adv.Math.},
	number = {6},
	pages = {3192-3255},
	title = {Tropical analytic geometry, {N}ewton polygons, and tropical intersections},
	volume = {229},
	year = {2012}}

@article{PayneAnalytificationisthelimitofalltropicalizations2009,
	author = {S. Payne},
	journal = {Math. Res. Lett.},
	number = {3},
	pages = {543-556},
	title = {Analytification is the limit of all tropicalizations},
	volume = {16},
	year = {2009}}

@book{MaclaganSturmfelsIntroductiontotropicalgeometry2015,
	author = {D. Maclagan and B. Sturmfels},
	publisher = {American Mathematical Society, Providence, RI},
	series = {Graduate Studies in Mathematics},
	title = {Introduction to tropical geometry},
	volume = {161},
	year = {2015}}

@conference{GublerAguidetotropicalizations2013,
	author = {W. Gubler},
	booktitle = {Algebraic and combinatorial aspects of tropical geometry},
	pages = {125-189},
	publisher = {Amer. Math. Soc., Providence, RI},
	series = {Contemp. Math.},
	title = {A guide to tropicalizations},
	volume = {589},
	year = {2013}}

@article{TotaroChowgroupsChowcohomologyandlinearvarieties2014,
	author = {B. Totaro},
	journal = {Forum Math. Sigma 2},
	pages = {Paper No. e17, 25},
	title = {{C}how groups, {C}how cohomology, and linear varieties},
	year = {2014}}

@misc{AminiPiquerezHomologicalsmoothnessandDeligneresolutionfortropicalfans2024,
	author = {O. Amini and M. Piquerez},
	howpublished = {arXiv:2105.01504},
	title = {Homological smoothness and {D}eligne resolution for tropical fans},
	year = {2024}}

@misc{MikamiOntropicalcycleclassmaps2020,
	author = {R. Mikami},
	howpublished = {arXiv:2009.04690},
	title = {On tropical cohomology of smooth algebraic varieties},
	year = {2020}}

@misc{AminiPiquerezTropicalFeichtner-Yuzvinskyandpositivitycriterionforfans2024,
	author = {O. Amini and M. Piquerez},
	howpublished = {arXiv:2405.05014},
	title = {Tropical {F}eichtner-{Y}uzvinsky and positivity criterion for fans},
	year = {2024}}

@article{JellShawSmackaSuperformstropicalcohomologyandPoincarduality2019,
	author = {P. Jell and K. Shaw and J. Smacka},
	journal = {Adv. Geom.},
	number = {1},
	pages = {101-130},
	title = {Superforms, tropical cohomology, and {P}oincar{\'e} duality},
	volume = {19},
	year = {2019}}

@article{ItenbergKatzarkovMikhalkinZharkovTropicalhomology2019,
	author = {I. Itenberg and L. Katzarkov and G. Mikhalkin and I. Zharkov},
	journal = {Math. Ann.},
	number = {1-2},
	pages = {963-1006},
	title = {Tropical homology},
	volume = {374},
	year = {2019}}

@conference{MikhalkinZharkovTropicaleignewaveandintermediatejacobians2014,
	author = {G. Mikhalkin and I. Zharkov},
	booktitle = {Homological mirror symmetry and tropical geometry},
	pages = {309-349},
	publisher = {Springer, Cham},
	series = {Lect. Notes Unione Mat. Ital.},
	title = {Tropical eigenwave and intermediate jacobians},
	volume = {15},
	year = {2014}}

@book{CoxaLittleSchenckToricvarieties2011,
	author = {D. A. Cox and J. B. Little and H. K. Schenck},
	publisher = {American Mathematical Society, Providence, RI},
	series = {Graduate Studies in Mathematics},
	title = {Toric varieties},
	volume = {124},
	year = {2011}}

@book{WeibelAnintroductiontohomologicalalgebra1984,
	author = {C. A. Weibel},
	publisher = {Cambridge University Press, Cambridge},
	series = {Cambridge Studies in Advanced Mathematics},
	title = {An introduction to homological algebra},
	volume = {38},
	year = {1994}}

@article{GrossShokirehAsheaf-theoreticapproachtotropicalhomology23,
	author = {A. Gross and F. Shokrieh},
	journal = {J. Algebra},
	pages = {577-641},
	title = {A sheaf-theoretic approach to tropical homology},
	volume = {635},
	year = {2023}}

@misc{AminiPiquerezHomologyoftropicalfans2021,
	author = {O. Amini and M. Piquerez},
	howpublished = {arXiv:2105.01504},
	title = {Homology of tropical fans},
	year = {2021}}

\end{document}